\newtheorem{theorem}{Theorem}[section]
\newtheorem{lemma}[theorem]{Lemma}
\newtheorem{proposition}[theorem]{Proposition}
\newtheorem{corollary}[theorem]{Corollary}
\theoremstyle{definition}
\newtheorem{definition}[theorem]{Definition}
\theoremstyle{remark}
\newenvironment{definition-proposition}{\begin{def-prop} \em}{\end{def-prop}}
\def\P{\mathbb P_{\mathbb C}}
\def\H{\mathbb H}
\numberwithin{equation}{section}
\newcommand{\C}{\mathbb{C}}
\begin{document}

\title{  \sc{ Estimates of the number of lines lying in the limit set for subgroups of $PSL(3,\Bbb{C})$}}
\author{W. Barrera, A. Cano \& J. P. Navarrete }
\address{Waldemar Barrera: Universidad Aut\'onoma de Yucat\'an Facultad de Matem\'aticas, Anillo Perif\'erico
Norte Tablaje Cat 13615 Chuburn\'a Hidalgo, M\'erida Yucat\'an. M\'exico.\\
Angel Cano:   Instituto de Matem\'atica Pura e Aplicada (IMPA), Rio de Janeiro, Brazil.\\
Juan Pablo Navarrete:  Universidad Aut\'onoma de Yucat\'an Facultad de Matem\'aticas, Anillo Perif\'erico
Norte Tablaje Cat 13615 Chuburn\'a Hidalgo, M\'erida Yucat\'an. M\'exico.\\ }
\email{bvargas@uady.mx, angel@impa.br, jp.navarrete@uady.mx}

\thanks{Research partially supported by grants from CNPq}
\keywords{ kleinian groups,   projective complex plane}

\subjclass{Primary: 32Q45, 37F45; Secondary 22E40}



\begin{abstract}
Given a discret subgroup $\Gamma\subset  PSL(3,\C)$,  we determine the number of complex lines and complex lines in general position lying in the complement of:  maximal  regions on which   $\Gamma$ acts properly discontinuously, the Kularni's limit set of  $\Gamma$ and  the equicontinuity set of $\Gamma$. We also provide sufficient conditions to ensure that the equicontinuity region agrees with the Kulkarni's discontinuity region and is the  largest set where the group acts properly discontinuously and we provide a description of he respective limit set in terms of the elements of the group.   
\end{abstract}

\maketitle

\section*{ Introduction}
Given a discrete group $\Gamma\subset PSL(n+1,\C)$, $n\neq 3$, is known that $\Gamma$ may have various maximal sets on which $\Gamma$ acts properly discontinuously, see \cite{cns}. And is not clear how the various extensions of the definition of discontinuity regions, see \cite{cns, cs, kul}, are related. This article is a continuation of our previous articles, see \cite{bcn1, bcn2}, on which have tried to solve the after told problem in the case $n=2$. Our argument, in the present article, is based on the existence of lines in the complement of discontinuity regions, see \cite{cano}, the generalization of the Montel's lemma for Normal families, see \cite{bcn1, kobayashi}   and the following estimations, see their respective proofs bellow:

\begin{theorem} \label{t:main1}
Let $\Gamma\subset PSL(3,\Bbb{C})$ be a discrete subgroup and $\Omega\subset \Bbb{P}^2_{\Bbb{C}}$ be either $Eq(\Gamma)$, $\Omega(\Gamma)$ or a maximal region where $\Gamma$ acts properly  discontinuously, then $Li(\Omega)=1,2,3$ or $\infty$.
\end{theorem}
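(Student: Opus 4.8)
The plan is to work with the complementary set $\Lambda:=\mathbb{P}^2_{\mathbb{C}}\setminus\Omega$, which in each of the three cases is closed and $\Gamma$-invariant, and to study the family $\mathcal{L}$ of complex lines contained in $\Lambda$, so that $Li(\Omega)=\#\mathcal{L}$. Since $\Gamma$ sends lines to lines and preserves $\Lambda$, it permutes $\mathcal{L}$. By the existence result in \cite{cano} we have $\mathcal{L}\neq\emptyset$ (we may assume $\Gamma$ infinite, the finite case being degenerate), so $Li(\Omega)\ge 1$; thus it suffices to prove that if $\mathcal{L}$ is finite then $\#\mathcal{L}\le 3$. First I would assume $\#\mathcal{L}=m$ is finite and pass to the finite-index normal subgroup $\Gamma_0\trianglelefteq\Gamma$ given by the kernel of the permutation action $\Gamma\to\mathrm{Sym}(\mathcal{L})$; every element of $\Gamma_0$ fixes each line of $\mathcal{L}$, and $\Gamma_0$ is still infinite.

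Next I would record the elementary linear-algebra fact that a non-trivial $g\in PSL(3,\mathbb{C})$ leaves invariant exactly $1$, $2$, $3$, or infinitely many complex lines according to its Jordan type, the value $\infty$ occurring precisely when $g$ has a repeated eigenvalue producing a pencil of invariant lines (all lines through a point, possibly together with one extra line). Since each line of $\mathcal{L}$ is invariant under every $g\in\Gamma_0$, if some $g\in\Gamma_0$ has only finitely many invariant lines we immediately get $\mathcal{L}\subseteq\{\text{invariant lines of }g\}$ and hence $\#\mathcal{L}\le 3$, as desired. So the problem reduces to the case in which every element of $\Gamma_0$ has a repeated eigenvalue and, by assumption, $m\ge 4$.

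Here I would split along a purely projective dichotomy: among $m\ge 4$ distinct lines either (A) some three are in general position, or (B) no three are, in which case an elementary incidence argument (two distinct lines meet in one point) forces all of them to pass through a common point $p$. In case (A), fixing three lines $\ell_1,\ell_2,\ell_3$ in general position, the generalized Montel lemma (\cite{bcn1,kobayashi}) shows that $\Gamma_0$ is equicontinuous on $\mathbb{P}^2_{\mathbb{C}}\setminus(\ell_1\cup\ell_2\cup\ell_3)$; combined with the invariance of equicontinuity under finite extensions and the containments of $Eq(\Gamma)$ in $\Omega(\Gamma)$ and in the maximal discontinuity regions, this confines $\Lambda$ to $\ell_1\cup\ell_2\cup\ell_3$ and forces $\#\mathcal{L}\le 3$, a contradiction. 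In case (B), all lines of $\mathcal{L}$ lie in the pencil through $p$, i.e.\ in the dual line $p^{*}\cong\mathbb{P}^1$; since $\Gamma_0$ fixes $p$ and fixes four or more points of this $\mathbb{P}^1$, it acts trivially on $p^{*}$, hence fixes every line through $p$, and I would then analyze this block-triangular elementary group directly, showing via its dynamics on the pencil that the set of lines of $\Lambda$ through $p$ is either small or has closure the whole pencil, so that a finite value $\ge 4$ cannot occur.

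The main obstacle I anticipate is case (B): ruling out a finite configuration of four or more concurrent limit lines genuinely requires the dynamical input, since invariance alone is consistent with any finite $\Gamma$-invariant subset of the pencil. The heart of the matter is to show that for the resulting elementary group the directions in $\mathbb{P}^1=p^{*}$ cut out by the degenerating sequences accumulate either on a set of at most three points or densely, never on a finite set of size $\ge 4$; this, together with the precise containments $Eq(\Gamma)\subseteq\Omega$ valid for all three choices of $\Omega$, is where the real work lies.
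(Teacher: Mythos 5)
Your reduction (pass to the finite-index kernel $\Gamma_0$ of the permutation action on the finite family $\mathcal{L}$, then split according to whether $\mathcal{L}$ contains three lines in general position or all lines of $\mathcal{L}$ are concurrent) is sound, and your case (A) works: Montel applied to the $\Gamma_0$-invariant open set $\mathbb{P}^2_{\mathbb{C}}\setminus(\ell_1\cup\ell_2\cup\ell_3)$, plus the (true, standard) equality $Eq(\Gamma_0)=Eq(\Gamma)$ for finite-index subgroups, traps $\mathbb{P}^2_{\mathbb{C}}\setminus Eq(\Gamma)$ inside $\ell_1\cup\ell_2\cup\ell_3$. Two slips there, both repairable. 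First, for a maximal region $\Omega$ you invoke the containment $Eq(\Gamma)\subset\Omega$, which is false in general --- the possible failure of exactly such containments is what this paper is about; the fix is to apply Montel to $\Omega$ itself (its complement contains $\ell_1,\ell_2,\ell_3$), obtaining $\Omega\subset Eq(\Gamma)$, and then conclude $\Omega=Eq(\Gamma)$ by maximality, since $Eq(\Gamma)$ is an invariant region of proper discontinuity containing $\Omega$. Second, your characterization ``infinitely many invariant lines precisely when $g$ has a repeated eigenvalue'' is wrong as stated (a matrix with a $2\times 2$ Jordan block and a third, distinct eigenvalue has a repeated eigenvalue but exactly two invariant lines), though only the membership of the count in $\{1,2,3,\infty\}$ matters for your step.

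The genuine gap is case (B), and you have flagged it yourself: nothing in your proposal rules out a finite family of $m\ge 4$ concurrent limit lines; you only state what ought to be true (``small or dense''). This is not a deferred verification --- it is the core of the theorem, and it is exactly where the paper spends its effort (the long lemma following Lemma \ref{l:pfix}). In outline, with the common point at $p=[e_1]$: (i) the homomorphism $\Pi$ of $\Gamma$ into the M\"obius group of a line $\ell$ not through $p$ has finite image, because $\Pi(\Gamma)$ permutes the $\ge 3$ points that the limit lines cut out on $\ell$ and a M\"obius transformation fixing three points is the identity; (ii) every infinite-order element of $Ker(\Pi)$ is non-diagonalizable, since a diagonalizable one (of type $\mathrm{diag}(a^{-2},a,a)$ in suitable coordinates) would fix pointwise a whole line missing $p$, producing a limit line not through $p$; hence such elements are unipotent; (iii) finiteness of $\mathcal{L}$ and discreteness of $\Gamma$, via explicit computations with a fixed unipotent element, kill the second unipotent parameter and show that the eigenvalue character $\lambda_{\infty}$ has finite image; (iv) one concludes that $\mathbb{P}^2_{\mathbb{C}}\setminus Eq(\Gamma)$ is a single line, contradicting the presence of $\ge 3$ lines in the complement of $\Omega$. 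Some argument of this strength is indispensable: as you yourself observe, invariance alone is compatible with a finite invariant subset of the pencil of any cardinality, so until case (B) is carried out the theorem is not proved.
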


\begin{theorem} \label{t:main2}
Let $\Gamma\subset PSL(3,\Bbb{C})$ be a discrete subgroup and $\Omega\subset \Bbb{P}^2_{\Bbb{C}}$ be either $Eq(\Gamma)$, $\Omega(\Gamma)$ or a maximal region where $\Gamma$ acts properly  discontinuously, then $LiG(\Omega)=1,2,3,4$ or $\infty$.
\end{theorem}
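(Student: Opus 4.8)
The plan is to deduce Theorem \ref{t:main2} from Theorem \ref{t:main1} together with the $\Gamma$-invariance of the family of lines contained in $\mathbb{P}^2_{\mathbb{C}}\setminus\Omega$. Write $\mathcal{L}$ for that family, so that $Li(\Omega)=|\mathcal{L}|$ and $LiG(\Omega)$ is the supremum of the cardinalities of subsets of $\mathcal{L}$ in general position (no three members concurrent). Since every element of $\Gamma$ carries a line of the complement to a line of the complement, $\Gamma$ permutes $\mathcal{L}$; this permutation action is the only ingredient beyond Theorem \ref{t:main1}. Because $LiG(\Omega)\le Li(\Omega)$ and any two distinct lines are automatically in general position, the case $Li(\Omega)<\infty$ is immediate: by Theorem \ref{t:main1} we then have $Li(\Omega)\in\{1,2,3\}$, whence $LiG(\Omega)=1$ if $Li(\Omega)=1$, $LiG(\Omega)=2$ if $Li(\Omega)=2$, and $LiG(\Omega)\in\{2,3\}$ if $Li(\Omega)=3$ (three distinct lines forming either a triangle or a pencil). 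Thus it remains to treat $Li(\Omega)=\infty$, where the only thing to prove is that \emph{if} $LiG(\Omega)$ is finite \emph{then} $LiG(\Omega)\le 4$.

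So assume $Li(\Omega)=\infty$ and $LiG(\Omega)=m<\infty$. First I would record a purely combinatorial reduction, most transparent in the dual plane, where a line of $\mathcal{L}$ becomes a point and ``general position'' becomes ``no three collinear''. Fixing a maximal general-position subset $\ell_1,\dots,\ell_m$ and dualizing to points $t_1,\dots,t_m$, every other point of the (infinite) dual set must be collinear with two of the $t_i$, so the entire dual set lies on the $\binom{m}{2}$ lines $\overline{t_it_j}$. Translating back, \emph{every line of $\mathcal{L}$, with at most finitely many exceptions, passes through one of at most $\binom{m}{2}$ points}; equivalently, all but finitely many members of $\mathcal{L}$ are distributed among finitely many pencils. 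Let $q_1,\dots,q_k$ be the centers of those pencils that contain infinitely many lines of $\mathcal{L}$. Since $\Gamma$ permutes $\mathcal{L}$ it permutes these concentration centers, so $\{q_1,\dots,q_k\}$ is a finite $\Gamma$-invariant set, and a finite-index subgroup $\Gamma_0\le\Gamma$ fixes each $q_i$.

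The heart of the argument is to show $k\le 2$ and to control the finitely many exceptional lines, and here the hypothesis $Li(\Omega)=\infty$ is used decisively. If $k\ge 3$, then among $q_1,q_2,q_3$ either the three points are in general position, forcing $\Gamma_0$ to be simultaneously diagonalizable, or they are collinear, forcing $\Gamma_0$ to fix three points of a line and hence to fix that line pointwise. In either case $\Gamma_0$ is elementary, so its complement meets only finitely many lines by the classification of such groups in \cite{bcn1, bcn2, cano}; as $\Omega$, and therefore $\mathcal{L}$, is unchanged on passing to the finite-index subgroup $\Gamma_0$, this contradicts $Li(\Omega)=\infty$. Hence $k\le 2$. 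The same mechanism disposes of the exceptional lines, which form a finite $\Gamma$-invariant set fixed by a further finite-index subgroup: three exceptional lines in general position would yield a triangle of fixed points and again an elementary group, while an exceptional line together with two distinct centers produces three collinear fixed points (the two centers and the point where the line meets their join), once more forcing an elementary group; the only surviving possibility is that the exceptional lines are concurrent, their common point then acting as one additional pencil center. In every surviving configuration $\mathcal{L}$ is supported on at most two pencils, and a set of lines drawn from two pencils can contain at most two from each without creating three concurrent lines; therefore $LiG(\Omega)\le 4$.

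I expect the main obstacle to be the bookkeeping in the last paragraph: cleanly eliminating the exceptional lines and the borderline collinear/concurrent configurations, and making rigorous the implication ``$\Gamma_0$ elementary $\Rightarrow Li(\Omega)$ finite'' uniformly for the three regions $Eq(\Gamma)$, $\Omega(\Gamma)$ and the maximal properly discontinuous regions. Everything else is either the combinatorial concentration lemma or the elementary observation that two pencils support at most four lines in general position, which is precisely what fixes the finite extremal value of $LiG(\Omega)$ at $4$ rather than at $3$.
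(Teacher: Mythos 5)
Your combinatorial skeleton is sound and, at its core, is the same mechanism the paper uses: the paper calls your concentration centers ``vertexes'' of the maximal general-position family, proves by exactly your dichotomy that there are at most two of them (a non-collinear triple of fixed points forces a simultaneously diagonalizable finite-index subgroup; a collinear triple forces a subgroup fixing a line pointwise, handled by Corollary \ref{equiline}), and then closes the argument. Where you differ is only the packaging and the endgame: you decompose all of $\mathcal{L}$ into pencils via duality and finish with ``two pencils carry at most four general-position lines,'' while the paper takes a line of the maximal family missing both vertexes, shows its $\Gamma$-orbit is infinite, and derives a contradiction from where those infinitely many images must concentrate. Both endgames work; yours is arguably more transparent.

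The genuine gap is the step you yourself flagged, and it is not bookkeeping: ``$\Gamma_0$ elementary $\Rightarrow Li(\Omega)<\infty$.'' The classification of elementary groups bounds the lines lying outside $Eq(\Gamma_0)$ (at most the three coordinate lines in the diagonal case, and $\ell\cup\{p\}$ in the case of Corollary \ref{equiline}), but what you need is a bound on the lines outside $\Omega$, and for that you need the inclusion $Eq(\Gamma_0)\subseteq\Omega$. Saying that ``$\Omega$ is unchanged on passing to $\Gamma_0$'' does not give this: a maximal region of proper discontinuity for $\Gamma$ is merely \emph{some} region of proper discontinuity for $\Gamma_0$, and in general there is no inclusion in either direction between such a region and the equicontinuity set (the paper's example $G_M$ in Section \ref{s:ex}, with $Eq(G_M)=\emptyset$ but $\Omega(G_M)\neq\emptyset$, shows how badly the two can diverge). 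The missing ingredient is Theorem \ref{dissubeq}(iii): since you may assume $LiG(\Omega)\geq 5\geq 3$, Montel gives $\Omega\subseteq Eq(\Gamma)$; because $Eq(\Gamma)$ is itself a region where $\Gamma$ acts properly discontinuously (Theorem \ref{dissubeq}(i)), maximality forces $\Omega=Eq(\Gamma)$, and in the Kulkarni case the stated containment $Eq(\Gamma)\subseteq\Omega(\Gamma)$ gives the same identification; finally $Eq(\Gamma)=Eq(\Gamma_0)$ by finite index. Only after this chain does the elementary classification bound $Li(\Omega)$ by $3$ (resp.\ $1$) and contradict $Li(\Omega)=\infty$. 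With that reduction inserted at each invocation of ``elementary $\Rightarrow$ contradiction,'' your proof closes correctly.
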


By meas of this  results we are able to show:
\begin{theorem} \label{t:main3}
Let $\Gamma \subset PSL(3, \Bbb{C})$ be  a discrete group. If the number of lines  of  each maximal  set $\Omega\subset \P^2$ on which  $\Gamma$ acts properly discontinuously  is at least 3,  then the Kulkarni's discontinuity region
$\Omega(\Gamma)$ agrees with the
equicontinuity set of the group $\Gamma$, is the largest open set where $\Gamma$ acts properly discontinuously and   $\Lambda(\Gamma)$ is the union of complex lines. Moreover, it holds that:
\[
\Lambda(\Gamma)=\overline{\bigcup_{\gamma\in \Gamma}
\Lambda(\gamma)}.
\]
 \end{theorem}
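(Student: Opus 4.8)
The plan is to prove Theorem~\ref{t:main3} in three stages, using the dichotomies from Theorems~\ref{t:main1} and~\ref{t:main2} as the engine. The hypothesis is that every maximal region $\Omega$ on which $\Gamma$ acts properly discontinuously contains at least $3$ lines in its complement; by Theorem~\ref{t:main1} the quantity $Li(\Omega)$ can only be $1,2,3$ or $\infty$, so the hypothesis forces $Li(\Omega)\in\{3,\infty\}$ for each such $\Omega$. First I would establish the chain of inclusions relating the three candidate domains: it is classical (see \cite{cns, kul}) that the equicontinuity set $Eq(\Gamma)$ is always open and $\Gamma$-invariant, that $\Gamma$ acts properly discontinuously on $Eq(\Gamma)$, and that $Eq(\Gamma)\subseteq\Omega(\Gamma)$ whenever the Kulkarni limit set consists of lines. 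The strategy is to show all three sets coincide by trapping them between common bounds dictated by the line count.

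Second, the core step is to upgrade the numerical hypothesis into a structural statement about the limit set. Here I would invoke the result of \cite{cano} that the complement of a maximal discontinuity region always contains at least one complex line, together with the generalized Montel lemma from \cite{bcn1, kobayashi}: a family omitting three lines in general position is normal. Because $Li(\Omega)\geq 3$ for the maximal region, one argues that $\Gamma$ restricted to the complement of three such lines is equicontinuous, which yields $Eq(\Gamma)\supseteq\Omega$ for the maximal $\Omega$; combined with the reverse inclusion $Eq(\Gamma)\subseteq\Omega(\Gamma)\subseteq\Omega$ (maximality of $\Omega$) this forces
\[
Eq(\Gamma)=\Omega(\Gamma)=\Omega,
\]
so the Kulkarni region is simultaneously the equicontinuity set and the largest properly discontinuous region. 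The main obstacle I anticipate is the case $Li(\Omega)=\infty$: when infinitely many lines lie in the limit set one must rule out that the group degenerates (e.g. fixes a pencil of lines through a point or a dual configuration) in a way that would make $Eq(\Gamma)$ strictly smaller than $\Omega(\Gamma)$, and this requires a careful analysis of the possible elementary or reducible configurations of $\Gamma$, likely appealing to the classification of such subgroups established in \cite{bcn1, bcn2}.

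Finally, to prove the formula $\Lambda(\Gamma)=\overline{\bigcup_{\gamma\in\Gamma}\Lambda(\gamma)}$, I would argue both inclusions. The inclusion $\supseteq$ is immediate since each $\Lambda(\gamma)$ is contained in the $\Gamma$-invariant closed set $\Lambda(\Gamma)$, which contains the closure. For $\subseteq$, having shown $\Lambda(\Gamma)$ is a union of complex lines, I would take a point $p\in\Lambda(\Gamma)$ and use the dynamics on the equicontinuity set: any point in the complement of $Eq(\Gamma)=\Omega(\Gamma)$ is a non-equicontinuity point, so there is a sequence $\gamma_k\in\Gamma$ whose limiting behaviour concentrates mass along lines of the form $\Lambda(\gamma)$ for individual elements. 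Concretely, using the normal-family characterization, a divergent sequence $\gamma_k$ has a subsequence converging (as quasi-projective maps) to a map whose image lies in a finite union of lines, each of which must appear among the $\Lambda(\gamma)$; taking closures then gives $p\in\overline{\bigcup_{\gamma\in\Gamma}\Lambda(\gamma)}$. The delicate point in this last step is justifying that the limit lines of sequences are genuinely realized as limit sets of \emph{single} group elements and not merely of the sequence, which again relies on the structure theory and on the density arguments developed in the companion papers \cite{bcn1, bcn2}.
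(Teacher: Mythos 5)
Your ``core step'' contains a genuine gap: it conflates $Li$ with $LiG$. The generalized Montel lemma you invoke (Theorem \ref{dissubeq}, part (iii)) requires three lines \emph{in general position} in the complement of the invariant set; the hypothesis of Theorem \ref{t:main3} only supplies three lines, which may all be concurrent, i.e.\ $LiG(\Omega)=2$ while $Li(\Omega)\geq 3$. This is not a side case that can be cited away: the paper's own example $G_M$ in Section \ref{s:ex} has infinitely many lines in $\P^2\setminus \Omega(G_M)$, all passing through one point, and $Eq(G_M)=\emptyset$, so the inference ``at least three lines in the complement of an invariant region $\Omega$ implies $\Omega\subset Eq(\Gamma)$'' is simply false. (Moreover, for $G_M$ every point of the line at infinity has infinite isotropy and points of $\mathbb{R}\times\C$ are accumulated by their own orbits, so $\Omega(G_M)$ is in fact maximal; this shows how delicate the $Li$ versus $LiG$ distinction is for this theorem.) Ruling out the pencil configurations is exactly what the bulk of the paper does: the lemma in Section \ref{s:conlin} showing that $LiG(\Omega)=2$ with $3\leq Li(\Omega)<\infty$ forces $Eq(\Gamma)=\P^2\setminus\ell$ (which then contradicts the standing hypothesis on maximal regions), and the analysis in Section \ref{s:limit} of $C(\Gamma)=\overline{\bigcup_{\gamma\in\Gamma}\Lambda(\gamma)}$ through the $SO(3)$/$EP(\C)$ dichotomy (Lemmas \ref{l:lic1} and \ref{l:tec2}). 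Your proposal acknowledges the pencil problem but defers it wholesale to ``the classification in \cite{bcn1, bcn2}''; that deferred step is the actual content of the proof, not a reference.

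Your route to the displayed formula is also different from the paper's, and the step you yourself flag as delicate is a second genuine gap. You propose to take divergent sequences, pass to pseudo-projective limits, and assert that the limiting lines ``must appear among the $\Lambda(\gamma)$''; kernels and images of such limits need not be limit sets of any \emph{single} element of $\Gamma$, and neither companion paper supplies such a realization statement. The paper circumvents this entirely by working with $C(\Gamma)$ from the outset: the corollary closing Section \ref{s:limit} shows $LiG(\Omega(\Gamma))\geq 3$ forces $LiG(\P^2\setminus C(\Gamma))\geq 3$, so Theorem \ref{dissubeq}(iii) applied to the invariant open set $\P^2\setminus C(\Gamma)$ gives $\P^2\setminus C(\Gamma)\subset Eq(\Gamma)$; since $C(\Gamma)\subset\P^2\setminus Eq(\Gamma)$ always holds, this traps $Eq(\Gamma)=\P^2\setminus C(\Gamma)$, and combined with $Eq(\Gamma)=\Omega(\Gamma)$ (the trapping-by-maximality argument, which is the one part of your outline that does match the paper) it yields $\Lambda(\Gamma)=C(\Gamma)$, giving the formula and the union-of-lines statement simultaneously --- the closure in the definition of $C(\Gamma)$ does the work you were attempting by hand. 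In short, the two places where your plan says ``this requires careful analysis / relies on the companion papers'' are precisely the two missing ideas, and without them the argument does not close.
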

Such theorem extent our previous results in  \cite{bcn1}.\\

This article is  organized as follows: in section \ref{s:prel} we introduce some terms  and  notations which will be used along the text. In section  \ref{s:conlin} we prove Theorems \ref{t:main1} and \ref{t:main2}.  In section we prove Theorem \ref{t:main3}. Finally in section \ref{s:ex}, we provide examples which show that our estimates are optimal.

\section{Preliminaries and Notations} \label{s:prel}

\subsection{Projective Geometry}
We recall that the complex projective plane
$\mathbb{P}^2_{\mathbb{C}}$ is $$\Bbb{P}^2_\Bbb{C}:=(\mathbb{C}^{3}\setminus \{0\})/\mathbb{C}^*
,$$
 where $\mathbb{C}^*$ acts on $\mathbb{C}^3\setminus\{0\}$ by the usual scalar
 multiplication. Let $[\mbox{ }]:\mathbb{C}^{3}\setminus\{0\}\rightarrow
\mathbb{P}^{2}_{\mathbb{C}}$ be   the quotient map. A set  $\ell\subset \mathbb{P}^2_{\mathbb{C}}$ is said to be a
complex line if $[\ell]^{-1}\cup \{0\}$ is a complex linear
subspace of dimension $2$. Given  $p,q\in
\mathbb{P}^2_{\mathbb{C}}$ distinct points,     there is a unique
complex line passing through  $p$ and $q$, such line will be
denoted by $\overleftrightarrow{p,q}$. The set of all the complex lines contained in $\P^2$,  denoted $Gr(\P^2)$, endowed  with the topology of the Hausdorff convergence, turns out to be homeomorphic $\P^2$. \\

Consider the action of $\mathbb{Z}_{3}$ (viewed as the cubic roots of
the unity) on  $SL(3,\mathbb{C})$ given by the usual scalar
multiplication, then
$$PSL(3,\mathbb{C})=SL(3,\mathbb{C})/\mathbb{Z}_{3}$$ is a Lie group
whose elements are called projective transformations.  Let
$[[\mbox{  }]]:SL(3,\mathbb{C})\rightarrow PSL(3,\mathbb{C})$ be   the
quotient map,   $\gamma\in PSL(3,\mathbb{C})$ and  $\widetilde\gamma\in
GL(3,\mathbb{C})$, we will say that  $\tilde\gamma$  is a  lift of
$\gamma$ if there is a cubic root $\tau$ of  $Det(\gamma)$ such that   $[[\tau \widetilde\gamma]]=\gamma$, also, we will use the notation $(\gamma_{ij})$ to denote elements  in $SL(3,\Bbb{C})$. One can show that
$ PSL(3,\mathbb{C})$ is a Lie group  that acts  transitively,
effectively and by biholomorphisms  on $\mathbb{P}^2_{\mathbb{C}}$
by $[[\gamma]]([w])=[\gamma(w)]$, where $w\in
\mathbb{C}^3\setminus\{0\}$ and    $\gamma\in SL_3(\mathbb{C})$.\\
 
Now define the space of the pseudo-projective maps:
 $$SP(3,\C)=(M(3,\C)\setminus \{0\})/\mathbb{C}^*,$$
 where $\mathbb{C}^*$ acts on $M(3,\C)\setminus \{0\}$ by the usual scalar
 multiplication. In the sequel $[[\mbox{ }]]:M(3,\C)\setminus \{0\}\rightarrow
SP(3,\C)$  will denote  the quotient map  and we will consider   $SP(3,\C)$ endowed  with the quotient topology.  Finally,   given $\gamma\in SP(3,\C)$  we define its kernel by:
$$Ker(\gamma)=[Ker(\tilde \gamma)\setminus\{0\}],$$
where $\tilde \gamma\in M(3,\C)$ is a lift of $\gamma$. Clearly $PSL(3,\Bbb{C})\subset SP(3,\Bbb{C})$ and $\gamma\in PSL(3,\C)$ if and only if $Ker(\gamma)=\emptyset$.

\subsection{ Discontinuous Actions on $\Bbb{P}^2_{\Bbb{C}}$}
\begin{definition}
 Let $\Gamma\subset PSL(3,\C)$ be a discrete group. An open  non-empty  set $\Omega\subset \P^2$ is said to be a region where  $\Gamma$ acts properly discontinuously  if $\Gamma \Omega=\Omega$ and  for each compact set  $K\subset \Omega$ the set $\{\gamma\in \Gamma\vert \gamma(K)\cap K\}$ is finite.  Set  
 \[
 Dis(\Gamma)=\{\Omega\subset \P^2: \Omega\neq \emptyset\, \& \,\Gamma \textrm{ acts properly discontinuously on } \Omega\}.
 \]
  The group $\Gamma$ is said to be Kleinian if $Dis(\Gamma)\neq \emptyset$.

\end{definition}

\subsubsection{The equicontinuity set }

Recall that the {\it equicontinuity set} for a family $\mathcal{F}$ of
endomorphisms of $\mathbb {P}^2_\mathbb {C}$, denoted
$Eq(\mathcal{F})$, is defined to be the set of points $z\in
\mathbb{P}^2_\mathbb{C}$ for which there is an open neighborhood $U$
of  $z$   such that $\{ f\vert_U : f \in \mathcal{F}\}$ is a normal family.

\begin{definition}
Let $\Gamma\subset PSL(3,\C) $ be a discrete group, then we define:
$$Lim(\Gamma)=\{g\in SP(3,\Bbb{C}):g \textrm{ is a cluster point of } \Gamma\}.$$
\end{definition}

\begin{theorem}[See \cite{bcn1}]\label{dissubeq}
If $\Gamma \subset PSL(3, \C)$ is a discrete group, then:
\begin{enumerate}
\item The equicontinuity region  of $\Gamma$ is a region where $\Gamma$ acts properly discontinuously;
\item The equicontinuity region is described as follows:
$$Eq(\Gamma)=\P^2\setminus \left ( \bigcup_{ \gamma\in Lim(\Gamma)} Ker(\gamma) \right);$$
\item If   $U$ is a $\Gamma$-invariant open subset of $\P^2$
such that $\P^2  \setminus U$ contains at least three complex lines
in general position, then $U \subset Eq(\Gamma).$

\end{enumerate}
\end{theorem}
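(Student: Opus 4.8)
The plan is to prove the three assertions in the order (2), (1), (3), since the description in (2) is the engine behind the proper discontinuity in (1), whereas (3) follows directly from the generalized Montel lemma quoted in the introduction. The cornerstone is a compactness property of pseudo-projective maps. Given a sequence of distinct elements $(\gamma_n) \subset \Gamma$, choose lifts $\tilde\gamma_n \in SL(3,\C)$ and normalize them in $M(3,\C)$ so that $\|\tilde\gamma_n\| = 1$; since the unit sphere of $M(3,\C)$ is compact, a subsequence converges to a nonzero matrix $\tilde\gamma$, and its class $\gamma = [[\tilde\gamma]] \in SP(3,\C)$ is a cluster point of $\Gamma$, so $\gamma \in Lim(\Gamma)$. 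The first step I would carry out is to show that along such a subsequence $\gamma_n \to \gamma$ uniformly on compact subsets of $\P^2 \setminus Ker(\gamma)$: off the kernel the normalized matrix action is continuous and nondegenerate in the limit, which yields locally uniform convergence, while near a point of $Ker(\gamma)$ nearby points are pushed toward the projectivized image of $\tilde\gamma$, so equicontinuity must fail.

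For (2) I would establish both inclusions through this convergence statement. If $z \in Ker(\gamma)$ for some $\gamma \in Lim(\Gamma)$, take $\gamma_n \to \gamma$; on every neighborhood of $z$ the family $(\gamma_n)$ cannot be equicontinuous, because points approaching $z$ from distinct directions are sent to separated limits, so $z \notin Eq(\Gamma)$. Conversely, assume $z$ lies in no kernel of an element of $Lim(\Gamma)$. Given any sequence in $\Gamma$ we may assume its elements are distinct, a repeated element yielding a trivially convergent subsequence; extract a subsequence converging to some $\gamma \in Lim(\Gamma)$. Since $z \notin Ker(\gamma)$ and $Ker(\gamma)$ is closed, a fixed neighborhood $U$ of $z$ avoids $Ker(\gamma)$, and by the first step the subsequence converges uniformly on $U$. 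Thus every sequence has a locally uniformly convergent subsequence near $z$, so $\{\gamma|_U : \gamma\in\Gamma\}$ is a normal family and $z \in Eq(\Gamma)$, which proves the identity in (2).

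Assertion (1) then follows. The set $Eq(\Gamma)$ is open by definition and $\Gamma$-invariant, because normality of $\{\gamma|_U : \gamma \in \Gamma\}$ is preserved under composition with a fixed group element. To obtain proper discontinuity, fix a compact $K \subset Eq(\Gamma)$ and suppose infinitely many distinct $\gamma_n \in \Gamma$ satisfy $\gamma_n(K) \cap K \neq \emptyset$; pick $x_n \in K$ with $\gamma_n(x_n) \in K$. Passing to subsequences gives $\gamma_n \to \gamma \in Lim(\Gamma)$, $x_n \to x \in K$, and, since $K$ meets no limit kernel, $\gamma_n(x_n) \to \gamma(x) \in K$. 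Discreteness forces $Ker(\gamma) \neq \emptyset$, for otherwise the normalized lifts would converge to an invertible matrix and the distinct $\gamma_n$ would accumulate inside $PSL(3,\C)$, a contradiction. Hence the image of $\gamma$ is a proper projective subspace, and inspecting the limits of the inverse sequence $(\gamma_n^{-1})$ shows this image lies in the kernel of another element of $Lim(\Gamma)$, hence in $\P^2 \setminus Eq(\Gamma)$ by (2). This contradicts $\gamma(x) \in K \subset Eq(\Gamma)$, so only finitely many $\gamma_n$ can occur.

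Finally, (3) is immediate from the generalized Montel lemma: if $U$ is $\Gamma$-invariant and $\P^2 \setminus U$ contains three complex lines in general position, then each $\gamma \in \Gamma$ maps $U$ onto $U$, so the family $\{\gamma|_U : \gamma\in\Gamma\}$ omits those three lines and is therefore normal, giving $U \subseteq Eq(\Gamma)$. I expect the main obstacle to be the convergence analysis underlying (2) together with the duality argument in (1): one must pin down the behaviour of the normalized maps $\gamma_n$ exactly on the divide between $Ker(\gamma)$ and its complement, and verify that the image of a degenerate limit really lands in the complement of $Eq(\Gamma)$ through the limit of the inverse sequence. These are genuine matrix-analytic facts rather than soft normal-family arguments, and keeping both the rank-one and rank-two cases of the limit uniformly under control is where the care is needed.
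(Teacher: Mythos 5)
This paper never proves Theorem \ref{dissubeq}; it is imported verbatim from \cite{bcn1}, so there is no in-paper proof to compare against and your proposal must stand on its own. Its architecture (normalized lifts, limits in $SP(3,\C)$, locally uniform convergence off the kernel) is indeed the one used in the reference, and your part (1), including the relation $\tau\tilde\gamma=0$ for the limit $\tau$ of the inverses, is sound modulo (2). But the crucial step of (2), the inclusion $\bigcup_{\gamma\in Lim(\Gamma)}Ker(\gamma)\subset \P^2\setminus Eq(\Gamma)$, has a genuine gap. Your justification, ``points approaching $z$ from distinct directions are sent to separated limits,'' is valid only when the limit matrix $\tilde\gamma$ has rank $2$ (kernel a point, image a line). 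When $\tilde\gamma$ has rank $1$ --- kernel a line $L$, image a point $[p]$ --- every point off $L$ is sent by the limit to the \emph{same} point $[p]$: all directional limits coincide, the $\gamma_n$ converge locally uniformly off $L$ to a constant map, and a constant limit is perfectly compatible with normality. The obstruction at $z\in L$ must come from the behaviour of $\gamma_n$ \emph{near} $L$, which the pseudo-projective limit does not see; a correct argument uses invertibility, e.g.\ write $\tilde\gamma=p\,\ell^{T}$, fix a line $L'=[\ker\phi]$ with $\phi(p)\neq 0$, and note that $\phi\circ\tilde\gamma_n\to\phi(p)\,\ell\neq 0$ forces the lines $\gamma_n^{-1}(L')=[\ker(\phi\circ\tilde\gamma_n)]$ to converge to $L$ in $Gr(\P^2)$; hence every neighbourhood of $z$ eventually contains points sent by $\gamma_n$ into $L'$, at fixed positive distance from $[p]$, so no subsequence converges uniformly near $z$ (any sublimit would be $\equiv[p]$ by the identity theorem). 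This rank-$1$ case is not exotic --- it already occurs for $\langle\mathrm{diag}(\lambda,\lambda,\lambda^{-2})\rangle$ with $|\lambda|<1$ --- and you flag the dichotomy yourself as ``where the care is needed'' without supplying the argument; note it also propagates into (1), which invokes the gapped inclusion twice. A smaller slip in the converse inclusion of (2): your neighbourhood $U$ depends on the sequence through $\gamma$; to get one $U$ serving all sequences you need $\bigcup_{\gamma\in Lim(\Gamma)}Ker(\gamma)$ to be closed, which holds (compactness of $SP(3,\C)$, closedness of $Lim(\Gamma)$, upper semicontinuity of $\gamma\mapsto Ker(\gamma)$) but must be argued.

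Part (3) is not proved at all. The ``generalized Montel lemma'' in the form you invoke --- holomorphic maps omitting three lines in general position form a normal family --- is false: the complement of three lines in general position is $(\C^*)^2$, which is not Kobayashi hyperbolic, and the maps $f_n(z)=[e^{nz}:1:1]$ omit the three coordinate lines yet admit no locally uniformly convergent subsequence near the imaginary axis (they tend to $[1:0:0]$ where $\mathrm{Re}\, z>0$ and to $[0:1:1]$ where $\mathrm{Re}\, z<0$). The Green--Kobayashi theorem for $\P^2$ requires $2n+1=5$ lines in general position. Statement (3) is true only because the family consists of automorphisms of $\P^2$ leaving $U$ invariant, and that is exactly what a proof must exploit: show that every $\gamma_0\in Lim(\Gamma)$ has $Ker(\gamma_0)\cap U=\emptyset$. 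For instance, if $Ker(\gamma_0)$ is a line meeting $U$ with image point $p$, choose $\ell_i\subset\P^2\setminus U$ with $p\notin\ell_i$ (possible precisely because three lines in general position have no common point); then $\gamma_n^{-1}(\ell_i)\subset\P^2\setminus U$ by invariance, while $\gamma_n^{-1}(\ell_i)\to Ker(\gamma_0)$ in $Gr(\P^2)$, forcing $Ker(\gamma_0)\subset\P^2\setminus U$, a contradiction; the rank-$2$ case is analogous. In other words, (3) needs the same invertibility/Grassmannian mechanism that is missing from your (2); as written, your appeal to Montel either cites a false statement or silently cites the very result of \cite{bcn1} being proved.
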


\subsubsection{The Discontinuity Region in the Kulkarni's sense}
Given  $\Gamma\subset PSL(3,\Bbb{C})$  a discrete group we define following Kulkarni, see  \cite{kul}: the set 
 $L_0(\Gamma)$  as the closure  of  the points in
$\mathbb{P}^2_{\mathbb{C}}$ with infinite isotropy group. The set $L_1(\Gamma)$ as the closure of the set  of cluster points  of
$\Gamma z$  where  $z$ runs  over  $\mathbb{P}^2_{\mathbb{C}}\setminus
L_0(\Gamma)$.
 The set   $L_2(\Gamma)$ as  the closure of cluster  points of $\Gamma
K$  where $K$ runs  over all  the compact sets in
$\mathbb{P}^2_{\mathbb{C}}\setminus (L_0(\Gamma) \cup L_1(\Gamma))$. The  \textit{Limit Set in the sense of Kulkarni} for $\Gamma$  is
defined as:  $$\Lambda (\Gamma) = L_0(\Gamma) \cup
L_1(\Gamma) \cup L_2(\Gamma).$$  The \textit{Discontinuity
Region in the sense of Kulkarni} of $\Gamma$ is defined as:
$$\Omega(\Gamma) = \mathbb{P}^2_{\mathbb{C}}\setminus
\Lambda(\Gamma).$$
Clearly the discontinuity region $\Omega(\Gamma)$ contains $Eq(\Gamma)$, see \cite{bcn1}, and is a region where $\Gamma$ acts properly discontinuously, see \cite{kul}.

\section{Counting Lines} \label{s:conlin}

\begin{definition}
Let $\Omega\subset \P^2$  be a non-empty open set. Let us define:
\begin{enumerate}
\item The lines in general position outside $\Omega$ as:
\begin{small}
\[
LG(\Omega)=\left \{\mathcal{L} \subset Gr_1(\P^2)\vert \textrm{The lines in } \mathcal{L} \textrm{ are in general position } \&\,  \bigcup\mathcal{L}\subset  \P^2\setminus \Omega \right \}; 
\]
\end{small}
\item The number of lines in general position outside $\Omega$ as:
\[
LiG(\Omega)=max(\{card(\mathcal{L}): \mathcal{L}\in  LG(\Omega)\}), 
\]
where $card(C)$ denotes the number of elements contained in $C$.
\item Given $\mathcal{L}\in 
LG(\Omega)$ and $v\in \bigcup \mathcal{L}$, we will say that  $v$ is a vertex for  $\mathcal{L}$ if there are $\ell_1,\ell_2\in \mathcal{L}$ distinct lines and   an infinite set $\mathcal{C}\subset  Gr_1(\P^2)$  such that $\ell_1\cap\ell_2 \cap(\bigcap \mathcal{C})=\{v\}$ and 
$\bigcup\mathcal{C}\subset \P^2\setminus \Omega$.
\end{enumerate}   
\end{definition}

\begin{corollary}
Let $\Gamma\subset PSL(3,\C)$ be a discrete group and  $\Omega\subset \P^2$ be a maximal  discontinuity region for the action of $\Gamma$ if $LiG(\Omega(\Gamma))\leq 2$, then $LiG(\Omega)\leq 2$.
\end{corollary}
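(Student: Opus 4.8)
The plan is to argue by contradiction: I assume $LiG(\Omega)\geq 3$ and show that this forces $\Omega=\Omega(\Gamma)$, which then contradicts the standing hypothesis $LiG(\Omega(\Gamma))\leq 2$. The whole argument rests on combining the characterization of $Eq(\Gamma)$ in Theorem \ref{dissubeq} with the maximality of $\Omega$, so no delicate estimates are required once the pieces are lined up.

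First I would record the $\Gamma$-invariance. Since $\Gamma$ acts properly discontinuously on $\Omega$, by definition $\Gamma\Omega=\Omega$, hence $\Omega$ is $\Gamma$-invariant and its complement $\P^2\setminus\Omega$ is a closed $\Gamma$-invariant set. Now suppose $LiG(\Omega)\geq 3$; by Theorem \ref{t:main2} this means $LiG(\Omega)\in\{3,4,\infty\}$, and in each of these cases there is, by the very definition of $LiG$, a family $\mathcal{L}\in LG(\Omega)$ of three complex lines in general position with $\bigcup\mathcal{L}\subset\P^2\setminus\Omega$. Thus the complement of the $\Gamma$-invariant open set $\Omega$ contains three lines in general position, which is precisely the hypothesis of Theorem \ref{dissubeq}(iii). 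Applying that result yields $\Omega\subset Eq(\Gamma)$, and since $Eq(\Gamma)\subset\Omega(\Gamma)$ we obtain $\Omega\subset\Omega(\Gamma)$.

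Finally I would invoke maximality together with the fact, recalled at the end of the Kulkarni subsection, that $\Omega(\Gamma)$ is itself a region on which $\Gamma$ acts properly discontinuously. Since $\Omega$ is maximal among such regions and $\Omega\subset\Omega(\Gamma)$, the inclusion cannot be proper, so $\Omega=\Omega(\Gamma)$. Consequently $LiG(\Omega)=LiG(\Omega(\Gamma))\leq 2$, contradicting the assumption $LiG(\Omega)\geq 3$. Therefore $LiG(\Omega)\leq 2$. The only point that needs careful checking is that ``three lines in general position contained in the complement'' matches verbatim the hypothesis of Theorem \ref{dissubeq}(iii); everything else is a short chain of inclusions closed off by maximality.
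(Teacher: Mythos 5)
Your proof is correct and is essentially the paper's own argument: assume $LiG(\Omega)\geq 3$, use Theorem \ref{dissubeq}(iii) to get $\Omega\subset Eq(\Gamma)\subset\Omega(\Gamma)$, then use maximality of $\Omega$ and the fact that $\Gamma$ acts properly discontinuously on $\Omega(\Gamma)$ to force $\Omega=\Omega(\Gamma)$, contradicting $LiG(\Omega(\Gamma))\leq 2$. The paper states this in two lines; you merely fill in the invariance check and the chain of inclusions (your appeal to Theorem \ref{t:main2} is superfluous, since $LiG(\Omega)\geq 3$ already yields three lines in general position in the complement by definition), so the two proofs coincide.
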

\begin{proof}
On the contrary, let us assume that there is a maximal region $\Omega\subset \Bbb{P}^2_\Bbb{C}$ where $\Gamma$ acts properly discontinuously and $LiG(\Omega)\geq 3$, thus $\Omega\subset Eq(\Gamma)\subset \Omega(\Gamma)$. Thus $\Omega=\Omega(\Gamma)$, which is a contradiction.
\end{proof}

\begin{definition}
Let $\Gamma\subset PSL(3,\Bbb{C})$ be a group and $p\in \Bbb{P}^2_{\Bbb{C}}$ such that $\Gamma p=p$, then define
$\Pi=\Pi_{p,\ell,\Gamma}:\Gamma\longrightarrow  Bihol(\ell) $
given by $\Pi(g)(x)=\pi(g(x))$ where
$\pi=\pi_{p,\ell}:\mathbb{P}^2_{\mathbb{C}}- \{p\}\longrightarrow
\ell$ is given by $$\pi(x)=\overleftrightarrow{x,p}\cap \ell.$$
 Also, if $p=e_1$ define 
 $\lambda_{\infty,\Gamma}:\Gamma\rightarrow \C$ by 
\[
\lambda_{\infty,\Gamma}([\gamma])=\gamma_{11}^3.
\]
\end{definition}
Clearly,  $\Pi_{p,\ell,\Gamma}$ and $\lambda_{\infty,\Gamma}$ are  well defined group morphisms.

\begin{lemma} \label{l:pfix}
\label{i:12f} Let $\Gamma\subset PSL(3,\C)$ be a discrete group and  $\Omega\subset \P^2$ be an open  $\Gamma$-invariant set such that   $LiG(\Omega)=2$, then  there is a point $p\in \Bbb{P}^2_\Bbb{C}$ such that $\Gamma p=p.$
\end{lemma}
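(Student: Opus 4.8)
The plan is to exploit that the complement $\P^2\setminus\Omega$ is $\Gamma$-invariant and that $\Gamma$ permutes the complex lines it contains; the sought point will be the common intersection of all such lines.

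First I would note that since $\Gamma\Omega=\Omega$, the complement $\P^2\setminus\Omega$ is $\Gamma$-invariant, and since every $\gamma\in\Gamma$ carries complex lines to complex lines, $\Gamma$ acts by permutations on the set
\[
\mathcal{S}=\{\ell\in Gr_1(\P^2):\ell\subset \P^2\setminus\Omega\}.
\]
The hypothesis $LiG(\Omega)=2$ provides two distinct lines $\ell_1,\ell_2\in\mathcal{S}$ and, at the same time, forbids any three lines of $\mathcal{S}$ from being in general position.

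The key step is to upgrade this local constraint into a single common point. Set $p=\ell_1\cap\ell_2$. For any line $\ell\in\mathcal{S}$ different from $\ell_1$ and $\ell_2$, the triple $\{\ell_1,\ell_2,\ell\}$ fails to be in general position; for pairwise distinct complex lines this is precisely concurrency, so $\ell$ must pass through $p$. Hence every line of $\mathcal{S}$ contains $p$, and since two distinct concurrent lines already determine their common point, $p$ does not depend on the chosen pair $\ell_1,\ell_2$ and is canonically attached to $\mathcal{S}$ (this also covers the degenerate case $card(\mathcal{S})=2$, where $p=\ell_1\cap\ell_2$ directly).

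Finally, I would deduce the $\Gamma$-invariance of $p$. Given $\gamma\in\Gamma$, the map $\gamma$ permutes $\mathcal{S}$ and sends the common point of a concurrent family to the common point of the image family; since $\gamma(\mathcal{S})=\mathcal{S}$ and $\gamma(p)$ is the common point of $\{\gamma(\ell):\ell\in\mathcal{S}\}$, we obtain $\gamma(p)=p$. Therefore $\Gamma p=p$, as required. The main obstacle I anticipate is not computational but lies in this concurrency argument: one must pass carefully from ``no three lines in general position'' to ``all lines concurrent at one point'' uniformly in the cardinality of $\mathcal{S}$, and then verify that the resulting point is fixed by all of $\Gamma$ and not merely by the finite-index subgroup stabilizing each individual line.
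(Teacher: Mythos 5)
Your proof is correct and follows essentially the same route as the paper: the paper also takes the set $\mathcal{L}$ of all lines in $\P^2\setminus\Omega$, asserts that $\bigcap\mathcal{L}$ is a single point $p_0$ (which is exactly the concurrency step you justify via ``no three lines in general position''), and concludes $\Gamma p_0=p_0$ from $\Gamma\mathcal{L}=\mathcal{L}$. Your write-up merely supplies the details the paper leaves implicit, including the degenerate case $card(\mathcal{L})=2$.
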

\begin{proof}
Let $\mathcal{L}=\{\ell\in Gr(\P^2): \ell\in  \P^2\setminus\Omega\}$. Since $LiG(\Omega)=2$, if follows that $\bigcap\mathcal{L}$ 
contains exactly one point, namely $p_0$.  Clearly $\Gamma \mathcal{L}=\mathcal{L}$ and in consequence $\Gamma p=p$.
\end{proof}

\begin{lemma}
Let $\Gamma\subset PSL(3,\C)$ be a discrete group and  $\Omega\subset \P^2$ be an open  $\Gamma$-invariant set such that $L_0(\Gamma)\subset \Bbb{P}^2_{\Bbb{C}}\setminus \Omega$. If   $LiG(\Omega)=2$ and $3 \leq Li(\Omega)<\infty$,  then:
\begin{enumerate}
\item \label{i:22f} If $\ell\in Gr(\Bbb{P}^2_\Bbb{C})$ does not contain p, then $\Pi_{p,\ell}(\Gamma)$ is finite.
\item \label{i:32f} It holds that  $Ker(\Pi)$ is a normal subgroup of finite index.
\item \label{i:42f} 
By conjugating with a projective transformation, if it is necessary, it follows that 
each $\gamma\in Ker(\Pi)$  with infinite order has a lift $\widetilde \gamma\in SL(3,\C)$ given by:
\[
\widetilde\gamma=
\left (
 \begin{array}{lll}
  1 & b & c\\
  0 & 1 & 0\\
  0 & 0 & 1\\
 \end{array}
\right).
\]  
\item \label{i:52f} By conjugating with a projective transformation, if it is necessary, it follows that each $\gamma\in Ker(\Pi)$  with infinite order has a lift $\widetilde \gamma \in SL(3,\C)$ given by:
\[
\widetilde\gamma=
\left (
 \begin{array}{lll}
  1 & b & 0\\
  0 & 1 & 0\\
  0 & 0 & 1\\
 \end{array}
\right).
\]  
\item \label{i:62f} By conjugating with a projective transformation, if it is necessary, it follows that the group  $\lambda_{\infty,\Gamma}(Ker(\Pi))$ is  finite.
\item \label{i:72f} There is $\ell\in Gr(\Bbb{P}^2_{\Bbb{C}})$ such that $Eq(\Gamma)=\P^2\setminus \ell.$
\end{enumerate}
\end{lemma}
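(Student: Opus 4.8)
The plan is to exploit that, by Lemma~\ref{l:pfix}, $\Gamma$ fixes a point $p$, and that $\mathcal{L}:=\{\ell\in Gr(\P^2):\ell\subset\P^2\setminus\Omega\}$ consists of finitely many lines (since $3\le Li(\Omega)<\infty$) all passing through $p$, because $LiG(\Omega)=2$ forces $\bigcap\mathcal{L}=\{p\}$. For (i), fix $\ell$ with $p\notin\ell$ and note that $\Gamma$ permutes $\mathcal{L}$ (as $\P^2\setminus\Omega$ is $\Gamma$-invariant), so $\Pi_{p,\ell}(\Gamma)\subset Bihol(\ell)$ permutes the finite set $\{\ell'\cap\ell:\ell'\in\mathcal{L}\}$ of at least three points. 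Since a M\"obius map fixing three points is the identity, this action is faithful, whence $\Pi(\Gamma)$ embeds in a finite symmetric group and is finite. Then (ii) is immediate: $Ker(\Pi)$ is the kernel of the morphism $\Pi$, hence normal, and $\Gamma/Ker(\Pi)\cong\Pi(\Gamma)$ is finite.

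For (iii), normalize $p=e_1$. An element $\gamma\in Ker(\Pi)$ fixes $p$ and acts trivially on the pencil of lines through $p$, which forces a lift of block form $\widetilde\gamma=\begin{pmatrix}a&b&c\\0&d&0\\0&0&d\end{pmatrix}$. If $a\ne d$ this matrix is diagonalizable and $\gamma$ fixes pointwise a line not passing through $p$; such a line lies in $L_0(\Gamma)\subset\P^2\setminus\Omega$, contradicting that every line of $\mathcal{L}$ passes through $p$ --- unless $\gamma$ has finite order. Hence an infinite-order $\gamma$ has $a=d$, and after scaling its lift is the stated unipotent $\begin{pmatrix}1&b&c\\0&1&0\\0&0&1\end{pmatrix}$.

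For (iv), observe that these infinite-order unipotents form a subgroup of $U\cong(\C^2,+)$ under $(b,c)$, and that the pointwise-fixed line of $\begin{pmatrix}1&b&c\\0&1&0\\0&0&1\end{pmatrix}$ is $\{bx_1+cx_2=0\}\in\mathcal{L}$. As $\mathcal{L}$ is finite, only finitely many complex directions $[b:c]$ occur; but a discrete subgroup $V$ of $\C^2$ whose nonzero elements lie in finitely many complex directions must lie in one complex line, since two $\C$-independent $v_1,v_2\in V$ would make $\{v_1+nv_2:n\in\mathbb{Z}\}$ realize infinitely many directions. Conjugating by a transformation fixing $e_1$ that carries this common fixed line to $\{x_1=0\}$ makes $c=0$, giving (iv). For (v), with $p=e_1$ one has $\lambda_{\infty,\Gamma}(\gamma)=a^3$, and the $SL$-relation $ad^2=1$ gives $\lambda_{\infty,\Gamma}(\gamma)=\mu^2$ with $\mu=a/d$, where $\mu=1$ exactly on $V$. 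Conjugation by an element of multiplier $\mu$ scales $V$ by $\mu$, and every element of $Ker(\Pi)$ normalizes $V$, so $\mu V=V$; a nonzero lattice in a complex line has finite multiplier stabilizer, hence $\mu(Ker(\Pi))$ and therefore $\lambda_{\infty,\Gamma}(Ker(\Pi))$ is finite. Here $V\ne 0$ because $Li(\Omega)\ge 3$ forces $\Gamma$, and thus the finite-index $Ker(\Pi)$, to be infinite, and an infinite discrete subgroup of the solvable group of such block matrices cannot be torsion, so it contains an element of infinite order.

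Finally, (vi) follows from Theorem~\ref{dissubeq}, which gives $Eq(\Gamma)=\P^2\setminus\bigcup_{g\in Lim(\Gamma)}Ker(g)$. Writing $\Gamma$ as a finite union of cosets $\delta\,Ker(\Pi)$ and passing to subsequences, it suffices to analyze divergent sequences in $Ker(\Pi)$ followed by left multiplication by a fixed $\delta\in PSL(3,\C)$. In the normalization of (iv)--(v) an element of $Ker(\Pi)$ has a lift $\begin{pmatrix}\mu&s&t\\0&1&0\\0&0&1\end{pmatrix}$ with $\mu$ ranging over the finite set $\mu(Ker(\Pi))$ and $t$ taking finitely many values (it is constant on each coset of $V$ and $V\subset\{t=0\}$), so divergence can occur only through $s\to\infty$. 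Dividing by $s$ shows such a sequence converges in $SP(3,\C)$ to $g_0=[[\begin{pmatrix}0&1&0\\0&0&0\\0&0&0\end{pmatrix}]]$, whose kernel is the single line $\{x_1=0\}$; since $\delta$ is invertible, $Ker(\delta g_0)=Ker(g_0)=\{x_1=0\}$ as well. Thus every element of $Lim(\Gamma)$ has kernel $\{x_1=0\}$, the union is this one line $\ell$, and $Eq(\Gamma)=\P^2\setminus\ell$. The routine parts are (i), (ii) and the block-form computation in (iii); the crux is the finiteness package in (iv)--(v) --- pinning down the common fixed line from the finiteness of $\mathcal{L}$ and controlling the multiplier via the lattice stabilizer --- together with the bookkeeping in (vi) ensuring that only $s$ can diverge, so that all limit maps share the same kernel line.
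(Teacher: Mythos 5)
Your proof is correct, and for items (i)--(iv) it is essentially the paper's own argument: the faithful permutation action of $\Pi(\Gamma)$ on the $\geq 3$ concurrent-line traces gives (i)--(ii); the block form of elements of $Ker(\Pi)$, with the diagonalizable case excluded because it would produce a pointwise-fixed line in $L_0(\Gamma)$ missing $p$, gives (iii); and your ``finitely many complex directions'' argument for (iv) is exactly the mechanism behind the paper's computation with $\gamma_0^m\gamma_1$, whose fixed lines realize infinitely many distinct directions once a second independent direction exists. The genuine divergence is in (v): the paper assumes $\lambda_{\infty}(Ker(\Pi))$ is infinite, extracts distinct values with $\lambda_m^{1/2}\to 1$, and contradicts discreteness by conjugating the unipotent $\gamma_0$; you instead note that every multiplier $\mu$ must satisfy $\mu V=V$ for the nonzero discrete translation subgroup $V\subset\C$, whose multiplier stabilizer is finite, and that $\lambda_\infty=\mu^2$. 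Your route is cleaner and in fact repairs a wrinkle in the paper's write-up: an infinite subgroup of $\C^*$ need not contain distinct elements tending to $1$ (consider the subgroup generated by $2$), so the paper's choice of $\lambda_m$ requires an extra case (multipliers tending to $0$ or $\infty$) that your lattice-stabilizer argument bypasses. For (vi) you are also more thorough: the paper computes the equicontinuity set of the finite-index unipotent subgroup and stops, implicitly invoking invariance of $Eq$ under passage to finite-index subgroups, whereas you run the coset decomposition, show every element of $Lim(\Gamma)$ has kernel equal to the single line $\{x_1=0\}$, and conclude via the kernel description of $Eq(\Gamma)$ in Theorem \ref{dissubeq}. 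One caveat, shared with the paper: the lemma silently requires $\Gamma$ to be infinite (otherwise conclusion (vi) fails, e.g.\ for the trivial group with $\Omega$ the complement of three concurrent lines, which satisfies all stated hypotheses), so your side claim that $Li(\Omega)\geq 3$ forces $\Gamma$ infinite is not literally true; treat infiniteness as an implicit hypothesis rather than something derivable, just as the paper does when it posits the unipotent element $\gamma_0\in Ker(\Pi)$ without justification.
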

\begin{proof}
Set  $\mathcal{L}=\{\ell\in Gr(\P^2): \ell\in  \P^2\setminus\Omega\}$ and $n_0=Card(\mathcal{L})$.\\

Let us prove (\ref{i:22f}). Since  
$\Gamma \bigcup\mathcal{L}=\bigcup\mathcal{L}$ it follows that $$\Pi(\Gamma)\pi\left (\bigcup\mathcal{L}\setminus \{p_0\}\right )= \pi\left (\bigcup\mathcal{L}\setminus \{p_0\}\right ).$$ 
In consequence $\pi\left (\bigcup\mathcal{L}\setminus \{p_0\}\right)$ is a $\Pi(\Gamma)$-invariant set whose cardinality is $n_0$. Thus 
$$\Gamma_0=\bigcap_{x\in\pi\left (\bigcup\mathcal{L}\setminus \{p_0\}\right) } Isot(x, \Pi(\Gamma)),$$ is a normal subgroup of $\Pi(\Gamma)$ with finite index. Moreover, every element in $\pi\left (\bigcup\mathcal{L}\setminus \{p_0\}\right ) $ is fixed by $\Gamma_0$. Since $\pi\left(\bigcup\mathcal{L}\setminus \{p_0\}\right) $  contains more than 3  elements we conclude that $\Gamma_0$ consist exactly on the identity. Therefore $\Pi(\Gamma)$ is finite.\\

Let us prove (\ref{i:32f}). 
From the previous claim it follows that $Ker(\Pi)$ is a normal subgroup of $\Gamma$ with finite index.\\

Let us prove (\ref{i:42f}). 
Let $\gamma\in Ker(\Pi)$ and $\widetilde\gamma\in SL(3,\C)$ a lift of $\gamma$, thus 
\[
\widetilde\gamma=
\left (
 \begin{array}{lll}
  a^{-2} & b & c\\
  0 & a & 0\\
  0 & 0 & a\\
 \end{array}
\right)
\]
where $a\in \C^*$ and $b,c\in \C$. if $\widetilde \gamma$ is diagonalizable,  there are  $v_1,v_2\in \C^3$ such that $\{e_1,v_1,v_2\}$ is an eigenbasis of  $\widetilde \gamma$ whose respective eigenvalues are $\{a^{-2},a,a\}$. In consequence  $\ell=\overleftrightarrow{v_1,v_2}\subset Fix(\gamma)\subset \P^2\subset \Omega$ and $e_1\notin \ell$, which is a contradiction. In consequence $\widetilde\gamma$ is non diagonalizable and the claim follows from the Jordan's normal form theorem.\\

By the previous claim we can  assume that    there is  an element $\gamma_0\in Ker(\Pi)$, such that $\gamma_0$ has a lift $\widetilde\gamma_0\in SL(3,\C)$ which is given by:

\[
\widetilde\gamma_0=
\left (
 \begin{array}{lll}
  1 & 1 & 0\\
  0 & 1 & 0\\
  0 & 0 & 1\\
 \end{array}
\right)
\]  

Let us prove (\ref{i:52f}). If this is not the case,  there is  $\gamma_1\in Ker(\Pi)$ which has a lift $\widetilde\gamma_1\in SL(3,\C)$ given by:
\[
\widetilde\gamma_1=
\left (
 \begin{array}{lll}
  1 & a & b\\
  0 & 1 & 0\\
  0 & 0 & 1\\
 \end{array}
\right).
\]
An inductive argument shows:
\[
\widetilde \gamma_0^m\widetilde\gamma_1=
\left (
 \begin{array}{lll}
  1 & a+n & b\\
  0 & 1 & 0\\
  0 & 0 & 1\\
 \end{array}
\right).
\]
A straightforward calculation shows that 
\[
\ell_m=\overleftrightarrow{[e_1],[0:-b:b+n]}\subset Fix(\gamma_0^m\gamma_1)
\]
and $\ell_m\neq \ell_n$, whenever $n\neq m$. That is $\ell_m\in \mathcal{L}$, which is a contradiction. Which concludes the proof of the claim.\\

Let us prove (\ref{i:62f}). On the contrary, let us assume that $\lambda_{\infty,\Gamma}(Ker(\Pi))$ is infinite, thus there is  a sequence $\lambda_m^3\subset \lambda_{\infty,\Gamma}(Ker(\Pi))$ of distinct elements
such that $\lambda_m^{1/2}\rightarrow 1$. For each $m\in \Bbb{N}$ let $\gamma_m\in Ker(\Pi)$ be such that $\lambda_{\infty,\Gamma} (\gamma_m)=\lambda_m^3$. Therefore  $\gamma_m$ has a lift $\widetilde \gamma_m\in SL(3,\C)$ given by:
\[
\widetilde\gamma=
\left (
 \begin{array}{lll}
  \lambda_m & a_m              & b_m\\
  0         & \lambda_m^{-1/2} & 0\\
  0         & 0                & \lambda_m^{-1/2}\\
 \end{array}
\right).
\] 
where $a_m,b_m\in \C$. Now a simple calculation shows:
\[
\widetilde\gamma_m^{-1}\widetilde\gamma_0\widetilde\gamma_m=
\left (
 \begin{array}{lll}
  1 & \lambda_m^{-3/2} & 0\\
  0 & 1                & 0\\
  0 & 0                & 1\\
 \end{array}
\right) \xymatrix{
\ar[r]_{m \rightarrow  \infty}&} 
\left (
 \begin{array}{lll}
  1 & 1 & 0\\
  0 & 1 & 0\\
  0 & 0 & 1\\
 \end{array}
\right).
\]
Which is a contradiction, since $\Gamma$ is discrete.\\

Let us prove (\ref{i:72f}). By the previous claims, it follows that each element $\gamma\in Ker(\lambda_{\infty,Ker(\Pi)})$ has a lift $\widetilde \gamma\in SL(3,\C)$ which is given by:
\[
\widetilde\gamma=
\left (
 \begin{array}{lll}
  1 & a & 0\\
  0 & 1 & 0\\
  0 & 0 & 1\\
 \end{array}
\right).
\]
It follows that $Eq( Ker(\lambda_{\infty,Ker(\Pi)}))=\P^2\setminus \overleftrightarrow{e_1,e_3}.$ Which concludes the proof of the claim.
\end{proof}

 \begin{definition} Let $\Gamma\subset PSL(3,\Bbb{C})$ be a discrete group, let us define 
 $C(\Gamma)$ as the set:
$$C(\Gamma) = \overline{ \bigcup _{\gamma \in \Gamma} \Lambda(\gamma)}.$$
\end{definition}

Clearly  $C(\Gamma)\subset \Bbb{P}^2_{\Bbb{C}}\setminus Eq(\Gamma)$  is a non empty, $\Gamma-$invariant closed set, see \cite{Nav1}, \cite{cano}.

\begin{proposition}
Let $\Gamma\subset PSL(3,\Bbb{C})$ be a discrete subgroup and $\Omega\subset \Bbb{P}^2_{\Bbb{C}}$ be either $Eq(\Gamma)$, $\Omega(\Gamma)$, $\Bbb{P}^2_{\Bbb{C}}\setminus C(\Gamma)$ or a maximal region where $\Gamma$ acts properly  discontinuously. If $LiG(\Omega)=2$ and $Li(\Omega)<\infty$, then $LiG(\Omega)=2$.
\end{proposition}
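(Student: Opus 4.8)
I read the conclusion as $Li(\Omega)=2$, since $LiG(\Omega)=2$ is the hypothesis (the displayed ``$LiG$'' is surely a typo for ``$Li$''). The plan is to argue by contradiction. As $LiG(\Omega)=2$ already exhibits two distinct lines of $\P^2\setminus\Omega$ in general position, we have $Li(\Omega)\geq 2$; combined with $Li(\Omega)<\infty$ and $Li(\Omega)\neq 2$ this forces $3\leq Li(\Omega)<\infty$, and I want to reach a contradiction. First I would verify that in each of the four cases for $\Omega$ the hypotheses of the preceding lemma hold, namely that $\Omega$ is open, $\Gamma$-invariant, and $L_0(\Gamma)\subset\P^2\setminus\Omega$. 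The $\Gamma$-invariance is immediate in all four cases. For the containment of $L_0(\Gamma)$: when $\Omega=Eq(\Gamma)$ or $\Omega=\Omega(\Gamma)$ it follows from $L_0(\Gamma)\subset\Lambda(\Gamma)$ together with $Eq(\Gamma)\subset\Omega(\Gamma)=\P^2\setminus\Lambda(\Gamma)$; when $\Omega$ is a maximal region, a point with infinite isotropy cannot lie in $\Omega$ (a small compact neighbourhood of it would meet infinitely many of its translates, contradicting proper discontinuity), so the closed set $\P^2\setminus\Omega$ contains the points of infinite isotropy and hence $L_0(\Gamma)$; and when $\Omega=\P^2\setminus C(\Gamma)$ one needs $L_0(\Gamma)\subset C(\Gamma)$, which I would take from \cite{Nav1,cano}.

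Granting $3\leq Li(\Omega)<\infty$, the preceding lemma produces a line $\ell$ with $Eq(\Gamma)=\P^2\setminus\ell$. For the maximal-region case I would also extract from the proof of that lemma the finer fact that $\ell=\overleftrightarrow{e_1,e_3}$ is fixed \emph{pointwise} by the infinite unipotent subgroup $Ker(\lambda_{\infty,Ker(\Pi)})$ (each non-trivial element having the lift displayed in its item (iv)). Consequently every point of $\ell$ has infinite isotropy, so $\ell\subset L_0(\Gamma)$, and moreover each such element $\gamma$ satisfies $\Lambda(\gamma)=\ell$, so $\ell\subset C(\Gamma)$.

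The contradiction is then closed by showing, in every case, that $Eq(\Gamma)\subset\Omega$, whence $\P^2\setminus\Omega\subset\P^2\setminus Eq(\Gamma)=\ell$ and therefore $Li(\Omega)\leq 1<3$. When $\Omega=Eq(\Gamma)$ this is an equality; when $\Omega=\Omega(\Gamma)$ it is the known containment $Eq(\Gamma)\subset\Omega(\Gamma)$; when $\Omega=\P^2\setminus C(\Gamma)$ it is equivalent to $C(\Gamma)\subset\P^2\setminus Eq(\Gamma)$, which is the relation recorded just before the statement. The remaining, and I expect main, obstacle is the maximal region: there one has no ready-made containment, so I would instead use $\ell\subset L_0(\Gamma)\subset\P^2\setminus\Omega$ to get $\Omega\subset\P^2\setminus\ell=Eq(\Gamma)$, and then invoke that $Eq(\Gamma)$ is itself a region of proper discontinuity (Theorem \ref{dissubeq}) together with the maximality of $\Omega$ to conclude $\Omega=Eq(\Gamma)$. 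In each case $\P^2\setminus\Omega=\ell$ contains a single line, contradicting $Li(\Omega)\geq 3$; hence $Li(\Omega)=2$.
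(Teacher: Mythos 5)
Your proof is correct and takes essentially the same route as the paper's: under the contradiction hypothesis $3\le Li(\Omega)<\infty$ you invoke the preceding lemma to get $Eq(\Gamma)=\P^2\setminus\ell$, deduce $\P^2\setminus\Omega\subset\ell$ for $Eq(\Gamma)$, $\Omega(\Gamma)$ and $\P^2\setminus C(\Gamma)$ from the standard containments, and in the maximal-region case use $\ell\subset L_0(\Gamma)\subset\P^2\setminus\Omega$ together with maximality and the proper discontinuity of $Eq(\Gamma)$ to force $\Omega=Eq(\Gamma)$, exactly as the paper intends. The only difference is that you spell out what the paper leaves implicit --- the verification of $L_0(\Gamma)\subset\P^2\setminus\Omega$ in each case and the pointwise-fixed-line fact behind the paper's dangling citation of a nonexistent item of the lemma --- which is added detail, not a different method.
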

\begin{proof}
If $\Omega$ is either  $Eq(\Gamma)$,  $\Omega(\Gamma)$ or $\Bbb{P}^2_{\Bbb{C}}\setminus C(\Gamma)$ and  the assertion does not follows, then previous lemma yields that $\Bbb{P}^2_{\Bbb{C}}\setminus(\Omega)$ is a complex line, which is a contradiction.\\

Finally, if  $\Omega$ is a maximal region where $\Gamma$ acts  properly discontinuously and $Li(\Omega)>2$, then  parts (\ref{i:72f}) and (\ref{i:82f}) of previous lemma yields $ L_{0}(\Gamma)=\Bbb{P}^2_{\Bbb{C}}\setminus Eq(\Gamma)$ is a complex line. 	Thus $\Omega\subset Eq(\Gamma) $, which is a contradiction.
\end{proof}

\begin{proposition}
Let $\Gamma\subset PSL(3,\C)$ be a discrete group and  $\Omega\subset \P^2$ be an open  $\Gamma$-invariant set such that $L_0(\Gamma)\subset \Bbb{P}^2_{\Bbb{C}}\setminus \Omega$. If   $LiG(\Omega)=3$ and $3 < Li(\Omega)<\infty$,  then
 there is $\ell\in Gr(\Bbb{P}^2_{\Bbb{C}})$ and $p\in \P^2\setminus \ell$ such that $Eq(\Gamma)=\P^2\setminus (\ell\cup \{p\})=\P^2\setminus L_0(\Gamma).$
\end{proposition}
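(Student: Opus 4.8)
The plan is to begin by pinning down the combinatorial shape of the family $\mathcal{L}=\{\ell\in Gr(\P^2):\ell\subset \P^2\setminus\Omega\}$ of lines lying outside $\Omega$. Since $3<Li(\Omega)=Card(\mathcal{L})<\infty$ while $LiG(\Omega)=3$, I would pass to the dual projective plane, where lines become points and ``in general position'' becomes ``no three collinear''. Fixing three lines in general position and testing an arbitrary fourth forces that fourth through a vertex of the triangle they form; a short case analysis (one cannot place non‑vertex points on all three sides, nor two on each of two concurrent lines, without producing four lines in general position) then shows that $\mathcal{L}$ consists of a pencil of at least three lines through a single point $p$ together with exactly one transversal line $\ell$ with $p\notin\ell$. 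This reduction is the step where the hypotheses $LiG(\Omega)=3$ and $Li(\Omega)>3$ are used in tandem, and I expect it to be the main obstacle, since one must rule out the configurations supported on two or three lines.

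Next I would extract a fixed point and a fixed line. As $\Omega$ is $\Gamma$-invariant, $\Gamma$ permutes $\mathcal{L}$; any point other than $p$ lies on at most two members of $\mathcal{L}$ whereas $p$ lies on at least three, so $p$ is intrinsic to $\mathcal{L}$ and $\Gamma p=p$, while $\ell$ is the unique line of $\mathcal{L}$ missing $p$, whence $\Gamma\ell=\ell$. Choosing coordinates with $p=e_3$ and $\ell=\overleftrightarrow{e_1,e_2}$, every $\gamma\in\Gamma$ acquires a block-diagonal lift acting as an invertible $2\times2$ block on the plane of $\ell$ and as a scalar on the eigenline $e_3$. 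The projection $\pi_{p,\ell}$ carries the $\ge 3$ pencil lines to $\ge 3$ distinct points of $\ell\cong\P^1$ which $\Pi(\Gamma)$ permutes; since a Möbius transformation of $\P^1$ fixing three points is the identity, $\Pi(\Gamma)$ embeds in a finite symmetric group, exactly as in the argument proving (\ref{i:22f}), so $Ker(\Pi)$ has finite index. In these coordinates $Ker(\Pi)$ is precisely the set of maps acting trivially on $\ell$, i.e. with scalar block, so each of its elements has the diagonal lift $\mathrm{diag}(\mu,\mu,\mu^{-2})$, fixing $\ell$ pointwise and fixing $p$.

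I would then read off the dynamics. Since $\Gamma$ is infinite, so is the finite-index subgroup $Ker(\Pi)$; a discrete infinite subgroup of $\{\mathrm{diag}(\mu,\mu,\mu^{-2})\}\cong\C^*$ cannot lie in the compact locus $|\mu|=1$, so it contains a loxodromic element with $|\mu|\neq 1$ (the exact analogue of the discreteness argument used for (\ref{i:62f})). Such an element has $\ell$ as its attracting/repelling line and $p$ as the complementary fixed point, and the whole family shares these eigenspaces, so $Eq(Ker(\Pi))=\P^2\setminus(\ell\cup\{p\})$. Because equicontinuity is insensitive to passing to a finite-index subgroup — a finite union of continuous translates of a normal family is again normal — I conclude $Eq(\Gamma)=Eq(Ker(\Pi))=\P^2\setminus(\ell\cup\{p\})$. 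Here the presence of the transversal $\ell$ is what forces honest eigenvalues rather than a parabolic Jordan block, producing the isolated point $p$; this is the feature distinguishing the present case from the single-line outcome of the $LiG=2$ analysis.

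Finally I would identify $\ell\cup\{p\}$ with $L_0(\Gamma)$. Every point of $\ell\cup\{p\}$ is fixed by the loxodromic element above, hence has infinite isotropy, giving $\ell\cup\{p\}\subset L_0(\Gamma)$; conversely $Eq(\Gamma)\subset\Omega(\Gamma)=\P^2\setminus\Lambda(\Gamma)\subset\P^2\setminus L_0(\Gamma)$ gives $L_0(\Gamma)\subset\P^2\setminus Eq(\Gamma)=\ell\cup\{p\}$. Therefore $L_0(\Gamma)=\ell\cup\{p\}$ and $Eq(\Gamma)=\P^2\setminus L_0(\Gamma)=\P^2\setminus(\ell\cup\{p\})$, which is the assertion. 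Throughout, the standing hypothesis $L_0(\Gamma)\subset\P^2\setminus\Omega$ keeps the isotropy locus inside the line configuration, while the inclusion $\Omega\subset Eq(\Gamma)$ used above comes from part (3) of Theorem \ref{dissubeq}, since $\P^2\setminus\Omega$ carries three lines in general position.
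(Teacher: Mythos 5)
Your proof is correct and is essentially the paper's argument: both exploit the fact that any further line outside $\Omega$ must pass through a vertex of a triangle of such lines, and both use the induced action on a distinguished line (where a M\"obius transformation fixing three points is the identity) to produce a finite-index subgroup all of whose elements have lifts $\mathrm{diag}(\mu,\mu,\mu^{-2})$, fixing a complex line pointwise and fixing a point, from which the equicontinuity set is read off. Your write-up is in fact more complete than the paper's, which works with $\Gamma_0=\bigcap_{m\in\mathcal{L}}Isot(m,\Gamma)$, stops at the statement $Eq(\Gamma_0)=\P^2\setminus(Y\cup\{[e_1]\})$, and leaves implicit the three steps you spell out: discreteness forcing an element with $|\mu|\neq 1$, the finite-index equality $Eq(\Gamma)=Eq(Ker(\Pi))$, and the identification of $\P^2\setminus Eq(\Gamma)$ with $L_0(\Gamma)$.
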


\begin{proof}
On the contrary, let us assume that $n_0=Li(\Omega)\geq 3$. Let $\mathcal{L}=\{\ell\in Gr(\Bbb{P}^2_{\Bbb{C}})\vert \ell\in \P^2\setminus \Omega\}$. Let $\Gamma_0=\bigcap_{\ell\in \mathcal{L}} Isot(\ell,\mathcal)$. Since $\mathcal{L}$ is a finite set, we conclude that $\Gamma_0$ is a normal subgroup of $\Gamma$ with finite index. Since $LiG(\Omega)=3$, and after conjugating by a  projective transformation if it is necessary, we can assume that $X =\overleftrightarrow{[e_1],[e_2]},\, Y=\overleftrightarrow{[e_3],[e_2]},\,Z=\overleftrightarrow{[e_1],[e_3]}\in \mathcal{L}$. Thus  every element  $\gamma\in \tau\Gamma_0\tau^{-1}$   has a lift in $\widetilde\gamma\in SL(3, \C)$
given by:
\[
\widetilde\gamma=
\left (
\begin{array}{lll}
\gamma_{11} & 0           & 0\\
0           & \gamma_{22} & 0\\
0           & 0           & \gamma_{33}\\
\end{array}
\right).
\]
where $\gamma_{11}\gamma_{22}\gamma_{33}=1$. Let  $\ell\in \mathcal{L}\setminus\{X,Y,Z\}$ be a complex line, then either $[e_1]\in \ell$, 
$[e_2]\in \ell$ or $[e_3]\in \ell$. Without loss of generality let us assume that $e_1\in \ell$. Set $\Pi=\Pi_{[e_1],Y}$ and $\pi=\pi_{[e_1],Y}$. Thus $[e_2],[e_3], \pi(\ell\setminus[e_1])\in \bigcap_{\gamma\in \Pi(\Gamma_0)} Fix(\gamma)$ and $[e_2]\neq  \pi(\ell\setminus[e_1])\neq [e_3]$. It falls out that $\Pi(\Gamma_0)=\{ Id \}$. Therefore, for each $\gamma\in \Gamma_0$ there is $\gamma_{11}\in \C^*$ such that $\widetilde \gamma\in SL(3,\C)$ given by:

\begin{equation} \label{e:rara}
\widetilde\gamma=
\left (
\begin{array}{lll}
\gamma_{11}^2 & 0           & 0\\
0           & \gamma_{11}^{-1} & 0\\
0           & 0           & \gamma_{11}^{-1
}\\
\end{array}
\right),
\end{equation}

is a lift of $\gamma$.  Therefore 
\begin{equation}\label{e:fixnor}
\begin{array}{l}
Y\cup \{[e_1]\}\subset \bigcap_{\gamma\in \Pi(\Gamma_0)} Fix(\gamma) \\
Eq(\Gamma_0)= \P^2\setminus (Y\cup \{[e_1]\}).
\end {array}
\end{equation}
\end{proof}

\begin{proposition}
Let $\Gamma\subset PSL(3,\Bbb{C})$ be a discrete subgroup and $\Omega\subset \Bbb{P}^2_{\Bbb{C}}$ be either $Eq(\Gamma)$, $\Omega(\Gamma)$, $\Bbb{P}^2_{\Bbb{C}}\setminus C(\Gamma)$ or a maximal region where $\Gamma$ acts properly  discontinuously. If $LiG(\Omega)=3$ and $Li(\Omega)<\infty$, then $LiG(\Omega)=3$.
\end{proposition}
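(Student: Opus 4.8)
The plan is to argue by contradiction and to reduce every one of the four admissible choices of $\Omega$ to the structural conclusion supplied by the previous proposition. Since $LiG(\Omega)=3$ means that $\P^2\setminus\Omega$ carries three complex lines in general position, we automatically have $Li(\Omega)\geq 3$; the genuine content of the statement is therefore that the total number of lines satisfies $Li(\Omega)=3$, i.e. that no fourth line can appear. I would accordingly suppose $Li(\Omega)>3$, and since $Li(\Omega)<\infty$ by hypothesis this places us exactly in the range $3<Li(\Omega)<\infty$ treated by the previous proposition.

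Before invoking that proposition I would verify its standing hypothesis $L_0(\Gamma)\subset\P^2\setminus\Omega$ in each case. For $\Omega=\Omega(\Gamma)$ this is immediate from $L_0(\Gamma)\subset\Lambda(\Gamma)$; for $\Omega=Eq(\Gamma)$ it follows from the chain $L_0(\Gamma)\subset\Lambda(\Gamma)\subset\P^2\setminus Eq(\Gamma)$, using $Eq(\Gamma)\subset\Omega(\Gamma)$; for a maximal region on which $\Gamma$ acts properly discontinuously it holds because a point of infinite isotropy can never lie in such a region (a small compact neighborhood would meet infinitely many of its translates), so that $L_0(\Gamma)\subset\overline{\P^2\setminus\Omega}=\P^2\setminus\Omega$; and for $\Omega=\P^2\setminus C(\Gamma)$ it follows from $L_0(\Gamma)\subset C(\Gamma)$, see \cite{Nav1,cano}.

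With the hypothesis in place, the previous proposition produces a line $\ell$ and a point $p\in\P^2\setminus\ell$ with $Eq(\Gamma)=\P^2\setminus(\ell\cup\{p\})$. The decisive remark is that the set $\ell\cup\{p\}=\P^2\setminus Eq(\Gamma)$ contains exactly one complex line. From here each case closes at once. If $\Omega=Eq(\Gamma)$, then $\P^2\setminus\Omega=\ell\cup\{p\}$ contains only $\ell$, so $LiG(\Omega)\leq 1$. If $\Omega$ is $\Omega(\Gamma)$ or $\P^2\setminus C(\Gamma)$, then $Eq(\Gamma)\subset\Omega$ (immediate for $\Omega(\Gamma)$, and equivalent to $C(\Gamma)\subset\P^2\setminus Eq(\Gamma)$ in the latter case), whence $\P^2\setminus\Omega\subset\ell\cup\{p\}$ again carries at most one line and $LiG(\Omega)\leq 1$. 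Finally, if $\Omega$ is maximal, then $LiG(\Omega)=3$ together with part (3) of Theorem \ref{dissubeq} gives $\Omega\subset Eq(\Gamma)$; since $Eq(\Gamma)$ is itself a properly discontinuous region by part (1) of the same theorem, maximality of $\Omega$ forces $\Omega=Eq(\Gamma)=\P^2\setminus(\ell\cup\{p\})$, so once more $LiG(\Omega)\leq 1$. In every case this contradicts $LiG(\Omega)=3$, and the contradiction yields $Li(\Omega)=3$.

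I expect the only delicate point to be the bookkeeping of the inclusion $L_0(\Gamma)\subset\P^2\setminus\Omega$ for the two intrinsic regions $Eq(\Gamma)$ and $\P^2\setminus C(\Gamma)$; once that is settled the argument is a formal reduction to the previous proposition, and the closing contradiction is clean because a union of a single line and one point off it can contain no second line.
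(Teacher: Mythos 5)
Your proof is correct and takes essentially the same route as the paper's: assume the (intended) conclusion $Li(\Omega)=3$ fails, feed $3<Li(\Omega)<\infty$ into the preceding proposition to get $Eq(\Gamma)=\P^2\setminus(\ell\cup\{p\})$, and conclude that $\P^2\setminus\Omega$ carries at most one complex line, contradicting $LiG(\Omega)=3$. The paper compresses all of this into one sentence (``the previous lemma yields $Li(\Omega)=1$''), so your write-up merely supplies the steps it leaves implicit --- the correction of the misprinted conclusion, the verification of $L_0(\Gamma)\subset\P^2\setminus\Omega$ in each of the four cases, and the treatment of the maximal-region case via Theorem \ref{dissubeq} and maximality.
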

\begin{proof}
If  the assertion does not follows, then previous lemma yields that $Li(\Omega)=1$, which is a contradiction.
\end{proof}

\begin{proposition}
Let $\Gamma\subset PSL(3,\Bbb{C})$ be a discrete subgroup and $\Omega\subset \Bbb{P}^2_{\Bbb{C}}$ be either $Eq(\Gamma)$, $\Omega(\Gamma)$, $\Bbb{P}^2_{\Bbb{C}}\setminus C(\Gamma)$ or a maximal region where $\Gamma$ acts properly  discontinuously. If $LiG(\Omega)\geq 4$, then  $Li(\Omega)$ is infinite. 
\end{proposition}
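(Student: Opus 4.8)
The plan is to argue by contradiction, assuming $Li(\Omega)<\infty$, and to show that this assumption forces $\Gamma$ to be finite, which is incompatible with four lines in general position sitting outside $\Omega$. First I would set $\mathcal{L}=\{\ell\in Gr(\P^2):\ell\subset \P^2\setminus\Omega\}$ and record that in each of the four cases $\Omega$ is $\Gamma$-invariant: this is immediate for $Eq(\Gamma)$ and $\Omega(\Gamma)$, it is the stated invariance of $C(\Gamma)$ for $\P^2\setminus C(\Gamma)$, and it is part of the definition for a maximal region. Hence $\P^2\setminus\Omega$ is $\Gamma$-invariant and, since projective maps send lines to lines, $\Gamma$ permutes $\mathcal{L}$. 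As $\mathcal{L}$ is finite by assumption, this permutation action gives a homomorphism $\Gamma\to Sym(\mathcal{L})$ whose kernel $\Gamma_0$ is a normal subgroup of finite index that fixes every element of $\mathcal{L}$ setwise.

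The key step is the rigidity observation that a projective transformation fixing four lines in general position must be the identity. Because $LiG(\Omega)\geq 4$, the set $\mathcal{L}$ contains four lines in general position, and after conjugating by a projective transformation I may assume these are the three coordinate lines $\{z=0\},\{x=0\},\{y=0\}$ together with $\{x+y+z=0\}$. Fixing the three coordinate lines setwise forces each element of $\Gamma_0$ to have a diagonal lift $diag(a,b,c)$; fixing the fourth line then forces $a=b=c$, so the lift is scalar. Thus $\Gamma_0=\{Id\}$, and $\Gamma$ embeds in $Sym(\mathcal{L})$, whence $\Gamma$ is finite.

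Finally I would rule out $\Gamma$ finite. For a finite discrete group one has $Lim(\Gamma)=\emptyset$, so Theorem \ref{dissubeq} gives $Eq(\Gamma)=\P^2$; similarly $L_0(\Gamma)=L_1(\Gamma)=L_2(\Gamma)=\emptyset$ yields $\Omega(\Gamma)=\P^2$, each $\Lambda(\gamma)=\emptyset$ yields $C(\Gamma)=\emptyset$, and a finite group acts properly discontinuously on all of $\P^2$, so every maximal region equals $\P^2$. In every case $\Omega=\P^2$, which contradicts the fact that $\P^2\setminus\Omega$ contains four lines. Therefore $Li(\Omega)$ cannot be finite, so it is infinite. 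I expect the rigidity lemma to be the main technical point, although it is elementary once the four lines have been normalized; the only other care needed is the uniform check that each of the four descriptions of $\Omega$ collapses to $\P^2$ when $\Gamma$ is finite.
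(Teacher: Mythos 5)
Your proof is correct and follows essentially the same route as the paper: pass to the finite-index subgroup $\Gamma_0$ fixing every line of $\mathcal{L}$ setwise, use the rigidity of four lines in general position to get $\Gamma_0=\{Id\}$, conclude $\Gamma$ is finite, and derive a contradiction. You merely make explicit two points the paper leaves implicit, namely the normalization argument behind the rigidity claim and the verification that a finite $\Gamma$ forces $\Omega=\P^2$ in all four cases.
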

\begin{proof}
On the contrary, let us assume that $Li(\Omega)<\infty$. Set  $\mathcal{L}=\{\ell\in Gr(\P^2):\ell\subset \P^2\setminus\Omega\}$ and $\Gamma_0=\bigcap_{\ell\in \mathcal{L}} Isot(\ell,\Gamma)$. Since $LiG(\Omega)\geq 4$, it falls out that each $\gamma\in \Gamma_0$ fixes alt least four lines in general position. Thus $\Gamma_0=\{Id\}$. Since $\Gamma$ is a subgroup of $\Gamma$ with finite index, we conclude that $\Gamma$ is finite. Which is a contradiction.
\end{proof}

Is worth nothing to check that the following theorem follows from the following Lemma. 

\begin{lemma}[See \cite{cano} \& \cite{Nav2}]
Let $\Gamma\subset PSL(3,\Bbb{C})$ be a discrete subgroup and $\Omega\subset \Bbb{P}^2_{\Bbb{C}}$ be either $Eq(\Gamma)$, $\Omega(\Gamma)$, $\Bbb{P}^2_{\Bbb{C}}\setminus C(\Gamma)$ or a maximal region where $\Gamma$ acts properly  discontinuously, then $\Omega$ contains a complex line.
\end{lemma}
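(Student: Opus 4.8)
The plan is to reduce the four cases to a single assertion — that $C(\Gamma)=\overline{\bigcup_{\gamma\in\Gamma}\Lambda(\gamma)}$ contains a complex line — and then to import this from the cyclic analysis of \cite{cano} and \cite{Nav2}. The key observation is that $C(\Gamma)$ is disjoint from every open set on which $\Gamma$ acts properly discontinuously: its points are accumulation points of $\Gamma$-orbits, so a compact neighbourhood of such a point sitting inside a properly discontinuous region would meet infinitely many of its own translates, which is impossible. For $Eq(\Gamma)$ the inclusion $C(\Gamma)\subset\mathbb{P}^2_{\mathbb{C}}\setminus Eq(\Gamma)$ is already recorded above; the same argument gives $C(\Gamma)\subset\mathbb{P}^2_{\mathbb{C}}\setminus\Omega(\Gamma)$ and $C(\Gamma)\subset\mathbb{P}^2_{\mathbb{C}}\setminus\Omega$ for every maximal properly discontinuous $\Omega$, while for $\mathbb{P}^2_{\mathbb{C}}\setminus C(\Gamma)$ the inclusion is tautological. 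Consequently a single complex line contained in $C(\Gamma)$ lies in the complement of each of the four regions simultaneously, so it suffices to produce one such line.

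To find the line I would fix one infinite-order element $\gamma_0\in\Gamma$, which exists because $\Gamma$ is infinite and discrete: an infinite-order element cannot be elliptic, since a diagonalizable transformation whose eigenvalues all have modulus one generates a subgroup with compact closure, hence a finite one by discreteness. Thus $\gamma_0$ is loxodromic or parabolic, and in either case a suitable subsequence of the normalized iterates $\gamma_0^{\pm n}$ converges in $SP(3,\mathbb{C})$ to a pseudo-projective map of rank one, whose kernel is therefore a complex line $\ell$. Concretely, after normalizing by the largest singular value the limit has rank equal to the number of eigenvalues of largest modulus; passing to the forward or to the backward direction one can always reach the situation in which a single eigenvalue — or a single Jordan block, in the parabolic case — dominates, so that the limit is rank one. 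By the classification of the Kulkarni limit sets of cyclic subgroups of $PSL(3,\mathbb{C})$ carried out in \cite{cano} and \cite{Nav2}, this line $\ell$ lies in $\Lambda(\gamma_0)\subset C(\Gamma)$.

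Combining the two steps yields a complex line $\ell\subset C(\Gamma)\subset\mathbb{P}^2_{\mathbb{C}}\setminus\Omega$ in all four cases at once, which is the assertion (equivalently $Li(\Omega)\ge 1$, as is needed for Theorem \ref{t:main1}). I expect the main obstacle to be the second step: verifying that the limiting pseudo-projective map has a line, rather than merely a point, as its kernel, and that this line genuinely belongs to the Kulkarni limit set $\Lambda(\gamma_0)$ and not only to $\mathbb{P}^2_{\mathbb{C}}\setminus Eq(\langle\gamma_0\rangle)$. This is precisely the loxodromic/parabolic case analysis of the cited references, so the cleanest route is to invoke their classification directly rather than to reprove it. A secondary, routine point is the existence of an infinite-order element, which I would settle by the elliptic observation above or, uniformly, by taking any divergent sequence in $\Gamma$ and analysing its limit in $SP(3,\mathbb{C})$.
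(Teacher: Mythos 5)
You read the statement (correctly) as ``the complement of $\Omega$ contains a complex line'', i.e.\ $Li(\Omega)\geq 1$; note the paper itself offers no proof but delegates entirely to \cite{cano} and \cite{Nav2}, so the comparison is with the strategy of those references. Your second step is sound and does match them: assuming $\Gamma$ infinite (an implicit hypothesis you should make explicit, since for finite $\Gamma$ the complement of $\Omega=\mathbb{P}^2_{\mathbb{C}}$ is empty and the lemma fails), there is an infinite-order $\gamma_0$, its normalized powers $\gamma_0^{\pm n}$ have a rank-one subsequential limit in $SP(3,\mathbb{C})$ whose kernel is a line, and the cyclic classification places that line in $\Lambda(\gamma_0)\subset C(\Gamma)$. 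This settles the cases $\Omega=Eq(\Gamma)$ (via the recorded inclusion $C(\Gamma)\subset\mathbb{P}^2_{\mathbb{C}}\setminus Eq(\Gamma)$) and $\Omega=\mathbb{P}^2_{\mathbb{C}}\setminus C(\Gamma)$ (tautologically).

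The genuine gap is your first step: the claim that $C(\Gamma)$ is disjoint from every region on which $\Gamma$ acts properly discontinuously is false, and it fails exactly in the case that makes this lemma delicate. Your accumulation argument only covers the $L_0\cup L_1$ part of each $\Lambda(\gamma)$ (fixed points and cluster points of orbits of \emph{points}); the lines in $\Lambda(\gamma)$ typically sit in $L_2$, whose points are cluster points of orbits of \emph{compact sets}, and those compact sets need not lie inside $\Omega$, so no translate of a compact subset of $\Omega$ is forced to return. Concretely, take $\Gamma=\langle\gamma\rangle$ with $\gamma=[\mathrm{diag}(\lambda_1,\lambda_2,\lambda_3)]$, $|\lambda_1|<|\lambda_2|<|\lambda_3|$. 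Then $C(\Gamma)=\Lambda(\gamma)=\overleftrightarrow{[e_1],[e_2]}\cup\overleftrightarrow{[e_2],[e_3]}$, yet $\Gamma$ acts properly discontinuously on the strictly larger region $W=\mathbb{P}^2_{\mathbb{C}}\setminus\bigl(\overleftrightarrow{[e_1],[e_2]}\cup\{[e_3]\}\bigr)$: forward iterates converge to $[e_3]$ uniformly on compact subsets of $\mathbb{P}^2_{\mathbb{C}}\setminus\overleftrightarrow{[e_1],[e_2]}$, while the cluster points of backward iterates of any compact $K\subset W$ lie on $\overleftrightarrow{[e_1],[e_2]}$ (a point $[x:y:z]$ with accumulation at $w$-coordinate $\neq 0$ would force $K$ to approach $[e_3]$), so all cluster points avoid $W$. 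Hence the maximal region containing $W$ contains the punctured line $\overleftrightarrow{[e_2],[e_3]}\setminus\{[e_2],[e_3]\}\subset C(\Gamma)$, and the one line in its complement, $\overleftrightarrow{[e_1],[e_2]}$, is \emph{not} located by your reduction — which line survives depends on $\Omega$. The same objection applies, in weaker form, to $\Omega=\Omega(\Gamma)$: the inclusion $C(\Gamma)\subset\Lambda(\Gamma)$ is not justified, since Kulkarni sets are not monotone under passing to cyclic subgroups (the base sets defining $L_1,L_2$ change), and the equality $\Lambda(\Gamma)=\overline{\bigcup_{\gamma\in\Gamma}\Lambda(\gamma)}$ is precisely Theorem \ref{t:main3}, proved only under extra hypotheses — invoking it here would be circular. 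What the cited references actually do for a maximal (or Kulkarni) region is argue relative to the given $\Omega$: for a divergent sequence $(\gamma_m)\subset\Gamma$, compactness of $SP(3,\mathbb{C})$ and the adjugate trick produce a rank-one limit of $(\gamma_m)$ or of $(\gamma_m^{-1})$, and proper discontinuity of $\Gamma$ \emph{on that particular} $\Omega$ is then used to force the kernel line of the limit out of $\Omega$. You should replace your first step by an argument of this relative type; the global containment $C(\Gamma)\subset\mathbb{P}^2_{\mathbb{C}}\setminus\Omega$ is unavailable.
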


\begin{theorem}
Let $\Gamma\subset PSL(3,\Bbb{C})$ be a discrete subgroup and $\Omega\subset \Bbb{P}^2_{\Bbb{C}}$ be either $Eq(\Gamma)$, $\Omega(\Gamma)$, $\Bbb{P}^2_{\Bbb{C}}\setminus C(\Gamma)$ or a maximal region where $\Gamma$ acts properly  discontinuously, then $Li(\Omega)=1,2,3$ or $\infty$.
\end{theorem}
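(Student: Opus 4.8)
The idea is that this statement is a clean bookkeeping corollary of the structural Propositions proved just above, once the finite and infinite regimes are separated. The starting observation is that every family of lines in general position is in particular a family of lines contained in $\P^2\setminus\Omega$, so $LiG(\Omega)\le Li(\Omega)$ always holds. Consequently, if $LiG(\Omega)=\infty$ then $Li(\Omega)=\infty$ and there is nothing to prove; and if $Li(\Omega)=\infty$ the conclusion already holds. So I would fix attention on the remaining case and assume throughout that $Li(\Omega)<\infty$, which forces $LiG(\Omega)<\infty$ as well.

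Under this standing assumption the first task is to pin down the admissible values of $LiG(\Omega)$. On the one hand, by the preceding Lemma (see also the Introduction and \cite{cano}) the complement $\P^2\setminus\Omega$ contains at least one complex line, so $LiG(\Omega)\ge 1$. On the other hand, the Proposition asserting that $LiG(\Omega)\ge 4$ implies $Li(\Omega)=\infty$ is incompatible with $Li(\Omega)<\infty$, hence $LiG(\Omega)\le 3$. Combining the two bounds yields $LiG(\Omega)\in\{1,2,3\}$, and it remains to read off $Li(\Omega)$ in each of these three cases.

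The case analysis is short. If $LiG(\Omega)=1$, then $\P^2\setminus\Omega$ cannot contain two distinct complex lines, because any two distinct lines of $\P^2$ meet in a single point and are therefore automatically in general position, which would force $LiG(\Omega)\ge 2$; together with $Li(\Omega)\ge 1$ this gives $Li(\Omega)=1$. If $LiG(\Omega)=2$ (respectively $LiG(\Omega)=3$), then since $Li(\Omega)<\infty$ the corresponding Proposition applies verbatim, with $\Omega$ ranging over exactly the same four choices $Eq(\Gamma)$, $\Omega(\Gamma)$, $\P^2\setminus C(\Gamma)$ or a maximal region, and returns $Li(\Omega)=2$ (respectively $Li(\Omega)=3$). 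Collecting the cases, $Li(\Omega)\in\{1,2,3\}$ when $Li(\Omega)<\infty$ and $Li(\Omega)=\infty$ otherwise, which is precisely the assertion $Li(\Omega)=1,2,3$ or $\infty$.

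\textbf{Where the work really sits.} I would stress that the substance of the argument is not in this assembly but in the quoted Propositions and in the Lemma: those rest on the Jordan normal form analysis of $Ker(\Pi)$ and on the normality and finite index of the subgroups fixing the finitely many lines of $\P^2\setminus\Omega$. The only genuine care needed at the level of this theorem is twofold, and both points are delicate mainly because they are easy to overlook: first, verifying $LiG(\Omega)\ge 1$, i.e. that the complement really does contain a complex line, which is exactly the content imported from the preceding Lemma; and second, checking that each of the four admissible types of $\Omega$ is literally covered by the hypotheses of the Propositions invoked. Since those results are stated for the identical list, the matching is immediate, and no new estimate is required here.
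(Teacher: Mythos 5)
Your proposal is correct and takes essentially the same route as the paper, which offers no written proof beyond the remark that the theorem follows from the preceding Lemma (a line always lies in the complement, so $LiG(\Omega)\geq 1$) together with the three Propositions on the cases $LiG(\Omega)=2$, $LiG(\Omega)=3$ and $LiG(\Omega)\geq 4$; your assembly --- splitting off $Li(\Omega)=\infty$, using $LiG(\Omega)\leq Li(\Omega)$, excluding $LiG(\Omega)\geq 4$ by contraposition, and settling $LiG(\Omega)=1$ via the observation that any two distinct lines are automatically in general position --- is exactly the intended argument. You also read the Propositions in the only sensible way (their stated conclusions ``$LiG(\Omega)=2$'' and ``$LiG(\Omega)=3$'' are typographical slips for $Li(\Omega)=2$ and $Li(\Omega)=3$), which is the correct interpretation.
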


\begin{lemma}
Let $\Gamma\subset PSL(3,\C)$ be a discrete group and $\ell\in Gr(\P^2)$ be  $\Gamma$-invariant  such that the action of $\Gamma$ restricted to $\ell$ is trivial, then:
\begin{enumerate}
 \item \label{1lineq1} If there is an element  in $\Gamma$  with infinite order  with a  diagonalizable lift, then there is  a discrete subgroup $G\subset \C^*$ such that $\Gamma$ is conjugated to:
\[
\Gamma_G= 
\left \{ 
\left (
\begin{array}{lll}
a & 0 & 0\\
0 & a & 0\\
0 & 0 & a^{-2}\\
\end{array}
\right): a\in G
\right\}.
\]
\item \label{1lineq2} If $\Gamma$ does not contains an element with infinite order and with  a diagonalizable lift, then, after conjugating with a projective  transformation if it is necessary,   each element  $\gamma\in \Gamma $ has a lift $\widetilde\gamma\in SL(3,\C)$ given by:
\[
\widetilde\gamma=
\left (
\begin{array}{lll}
a & 0 & b\\
0 & a & c\\
0 & 0 & a^{-2}\\
\end{array}
\right),
\]
where $\mid a\mid=1 $.
\end{enumerate}
\end{lemma}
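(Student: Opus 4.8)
The plan is to first fix coordinates so that $\ell=\overleftrightarrow{[e_1],[e_2]}$; this is the conjugation alluded to in the statement. Since every $\gamma\in\Gamma$ fixes each point of $\ell$, any lift $\widetilde\gamma$ acts as a scalar on the plane spanned by $e_1,e_2$, and because $\det\widetilde\gamma=1$ it must have the form
\[
\widetilde\gamma=\left(\begin{array}{ccc} a & 0 & b \\ 0 & a & c \\ 0 & 0 & a^{-2}\end{array}\right),\qquad a\in\C^*,\; b,c\in\C .
\]
This common shape drives both cases. I record two elementary facts about it: such a lift is diagonalizable precisely when $a^3\neq1$, or when $a^3=1$ and $b=c=0$; and when it is diagonalizable (with eigenvalues $a,a,a^{-2}$) the class $\gamma$ has infinite order in $PSL(3,\C)$ if and only if $a^3$ is not a root of unity.

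For part (\ref{1lineq2}) I argue by contradiction: suppose some $\gamma\in\Gamma$ had $|a|\neq1$. Then $|a^3|\neq1$, so $a^3\neq1$ and the lift above is diagonalizable; moreover $a^3$ is not a root of unity, so $\gamma$ has infinite order. Thus $\gamma$ would be an infinite-order element with a diagonalizable lift, contrary to the hypothesis of (\ref{1lineq2}). Hence $|a|=1$ for every $\gamma\in\Gamma$, which is exactly the asserted conclusion.

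For part (\ref{1lineq1}) let $\gamma_0$ be the given infinite-order element with diagonalizable lift. Its $a_0$-eigenspace is exactly the plane spanned by $e_1,e_2$, so conjugating by a transformation that fixes $\ell$ pointwise and carries the $a_0^{-2}$-eigenvector to $[e_3]$ puts $\widetilde{\gamma_0}=\mathrm{diag}(a_0,a_0,a_0^{-2})$ while preserving the triangular shape of every element; infinite order forces $a_0^3$ not to be a root of unity. For an arbitrary $\gamma$ with lift as above, conjugation by $\gamma_0^n$ scales the $(1,3)$- and $(2,3)$-entries by $a_0^{3n}$ and fixes the rest:
\[
\gamma_0^n\widetilde\gamma\gamma_0^{-n}=\left(\begin{array}{ccc} a & 0 & a_0^{3n}b \\ 0 & a & a_0^{3n}c \\ 0 & 0 & a^{-2}\end{array}\right).
\]
If $(b,c)\neq(0,0)$ these are infinitely many distinct elements of $\Gamma$; choosing the direction $n\to\pm\infty$ for which $a_0^{3n}\to0$ (when $|a_0^3|\neq1$), or a subsequence along which $a_0^{3n_k}$ converges on the unit circle (when $|a_0^3|=1$), produces a sequence of distinct elements of $\Gamma$ converging in $PSL(3,\C)$ to a matrix of the same shape, contradicting discreteness. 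Hence $b=c=0$ and every element of $\Gamma$ equals $\mathrm{diag}(a,a,a^{-2})$. Taking $G=\{a\in\C^*:[[\mathrm{diag}(a,a,a^{-2})]]\in\Gamma\}$, which is a subgroup of $\C^*$, gives $\Gamma=\Gamma_G$; and since $a\mapsto[[\mathrm{diag}(a,a,a^{-2})]]$ is a continuous homomorphism with finite kernel, hence a three-to-one covering onto its image, discreteness of $\Gamma$ forces $G$ to be discrete.

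The delicate step is the elliptic subcase of part (\ref{1lineq1}), where $|a_0^3|=1$ yet $a_0^3$ is not a root of unity: here $a_0^{3n}$ neither decays nor grows, so one cannot simply pass to a limit of the conjugating powers, and must instead use compactness of the unit circle to extract a convergent subsequence and then check that the resulting conjugates remain genuinely distinct modulo the cube-root-of-unity ambiguity of lifts, so that a true accumulation point in $PSL(3,\C)$ appears and contradicts discreteness. Everything else is routine verification of the diagonalizability and order criteria and of the fact that the conjugations used keep $\ell$ fixed pointwise.
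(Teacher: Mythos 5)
Your proof is correct, and although it shares the paper's skeleton --- normalize $\ell=\overleftrightarrow{[e_1],[e_2]}$, observe that fixing $\ell$ pointwise forces every lift into the form $\left(\begin{smallmatrix} a&0&b\\ 0&a&c\\ 0&0&a^{-2}\end{smallmatrix}\right)$, then contradict discreteness --- both key steps are executed by genuinely different means. For part (i) the paper starts by assuming the diagonalizable infinite-order element $\gamma_0$ has $|a_0|<1$; this is legitimate only because (as your circle-compactness argument in effect shows) $|a_0|=1$ is already incompatible with discreteness, a point the paper never makes. It then runs a commutator argument: if some $\tau\in\Gamma$ had fixed-point data under the projections $\Pi_{e_1,\overleftrightarrow{e_2,e_3}}$, $\Pi_{e_2,\overleftrightarrow{e_1,e_3}}$ different from $\gamma_0$'s, the commutator $\tau\gamma_0\tau^{-1}\gamma_0^{-1}$ would be parabolic unipotent, and conjugating it by powers $\gamma_0^m$ drives it to the identity (the paper's displayed entry $a^{-m}\kappa_{13}$ has a sign/exponent slip, but the mechanism is this scaling). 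You instead conjugate an arbitrary $\gamma$ by $\gamma_0^n$ and watch the $(1,3)$ and $(2,3)$ entries scale by $a_0^{3n}$; this avoids the M\"obius-theoretic input (two maps of a line sharing exactly one fixed point have parabolic commutator) and, via the subsequence extraction on the unit circle, covers the case the paper silently excludes, so your version is more elementary and strictly more complete. For part (ii) the paper reaches diagonalizability geometrically, producing two invariant lines $\ell_1,\ell_2$ whose intersection is a fixed point off the configuration, whereas you read diagonalizability and infinite order directly off the matrix: $|a|\neq 1$ gives $a^3\neq 1$, hence a full eigenbasis, and $a^3$ not a root of unity gives infinite order --- same conclusion, shorter route. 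The one glib phrase in your write-up, ``three-to-one covering onto its image,'' is harmless: all you need is that a continuous homomorphism with finite kernel pulls a discrete subgroup back to a discrete set, which is immediate by passing to a subsequence on which the images are distinct.
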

\begin{proof}
 Let us prove (\ref{1lineq1}).  After conjugating with a projective transformation, if it is necessary, we may assume that $\ell=
\overleftrightarrow{e_1,e_2}$ and there is  $\gamma_0\in \Gamma$ with a lift $\gamma_0\in SL(3,\C)$ given by:
\[
\widetilde\gamma_0=
\left (
\begin{array}{lll}
a & 0 & 0\\
0 & a & 0\\
0 & 0 & a^{-2}\\
\end{array}
\right),
\]
where  $\vert a \vert<1 $. On the other hand, since each element $\gamma\in \Gamma$ has a lift $\widetilde \gamma\in SL(3,\C)$ given by:
\[
\widetilde\gamma=
\left (
\begin{array}{lll}
\gamma_{11} & 0 & \gamma_{13}\\
0 & \gamma_{11} & \gamma_{23}\\
0 & 0 & \gamma_{11}^{-2}\\
\end{array}
\right),
\] 
Set $\Pi_1=\Pi_{e_1,\overleftrightarrow{e_2,e_3}}$, $\pi_1=\Pi_{e_1,\overleftrightarrow{e_2,e_3}}$
and $\Pi_2=\Pi_{e_2,\overleftrightarrow{e_1,e_3}}$, $\pi_2=\Pi_{e_2,\overleftrightarrow{e_1,e_3}}$.  Clearly $Ker(\Pi_j), \,j\in \{1,2\}$, is trivial.\\

Is straightforward to check that the following claim proves the assertion. \\

Claim 1.- For each  $\tau \in \Gamma\setminus\{Id\}$ it follows that
$
Fix(\Pi_j(\tau))=Fix(\Pi_j(\gamma_0)).
$ 
If this is not the case, there are $j_0\in \{1,2\}$ and $\tau_0\in \Gamma$ such that  $Fix(\Pi_{j_0}(\tau_0))\cap Fix(\Pi_{j_0}(\gamma_0))$ is a single point. In consequence $\Pi_{j_0}(\tau_0\gamma_0\tau_0^{-1}\gamma_0^{-1})$ is parabolic. Without loss of generality, let us assume that $j_0=1$, thus $\kappa=\tau_0\gamma_0\tau_0^{-1}\gamma_0^{-1}$ has a lift  $\widetilde\kappa\in SL(3,\C)$ given by:
\[
\widetilde\kappa=
\left (
\begin{array}{lll}
1 & 0 & \kappa_{13}\\
0 & 1 & \kappa_{23}\\
0 & 0 & 1\\
\end{array}
\right),
\] 
where $\kappa_{13}\neq 0$. Finally, a simple calculation shows:
\[
\widetilde\gamma_0^m\widetilde\kappa 
\widetilde\gamma_0^{-m}=
\left (
\begin{array}{lll}
1 & 0 & a^{-m}\kappa_{13}\\
0 & 1 & a^{-m}\kappa_{23}\\
0 & 0 & 1\\
\end{array}
\right) \xymatrix{
\ar[r]_{m \rightarrow  \infty}&} 
 \left (
\begin{array}{lll}
1 & 0 & 0\\
0 & 1 & 0\\
0 & 0 & 1\\
\end{array}
\right),
\]
which is a contradiction since $\Gamma$ is discrete.\\

Let us prove (\ref{1lineq2}). After conjugating with a projective transformation, if it is necessary, we may assume that $\ell=\overleftrightarrow{e_1,e_2}$. In consequence each element $\gamma\in \Gamma$ has a lift given by:
\[
\widetilde\gamma=
\left (
\begin{array}{lll}
\gamma_{11} & 0 & \gamma_{13}\\
0 & \gamma_{11} & \gamma_{23}\\
0 & 0 & \gamma_{11}^{-2}\\
\end{array}
\right).
\]
Set $\Pi_1=\Pi_{e_1,\overleftrightarrow{e_2,e_3}}$, $\pi_1=\Pi_{e_1,\overleftrightarrow{e_2,e_3}}$, $\Pi_2=\Pi_{e_2,\overleftrightarrow{e_1,e_3}}$, $\pi_2=\Pi_{e_2,\overleftrightarrow{e_1,e_3}}$ and $\lambda_\infty:(\Gamma)\rightarrow \C^* $ given by $\lambda_\infty(\gamma)=\gamma_{11}^3$.\\

Clearly, the assertion yields from the following claim:\\

Claim 2.- $\lambda_\infty(\Gamma)\subset \Bbb{S}^1 $. On the contrary, let us assume that there is $\gamma_0\in \Gamma$ such that $\vert \lambda_\infty(\gamma_0)\vert \neq 1$. In such case:
\[
Fix(\Pi_1(\gamma_0))=\{p_
1=[0:\gamma_{23}^{(0)}:1-\gamma_{11}^{(0)}]
,[e_2]\}
\]
\[
Fix(\Pi_2(\gamma_0))=\{p_2=[\gamma_{13}^{(0)}:0:1-\gamma_{11}^{(0)}]
,[e_1]\}.
\]
Set $\ell_1=\overleftrightarrow{p_1,e_1}$ and $\ell_2=\overleftrightarrow{p_2,e_2}$. Thus $\gamma_0(\ell_1)=\ell_1$ and $\gamma_0(\ell_2)=\ell_2$. That is the unique point $*$ in  $\ell_1\cap\ell_2$ is fixed by $\gamma_0$ and $*\in \P^2\setminus(\ell_1\cup\ell_2$. Therefore $\gamma_0$ is diagonalizable. Finally, since $\lambda_\infty$ is a group morphism, we conclude that 
$\gamma_0$ has infinite order. Which contradicts the initial assumption.
\end{proof}

Now it follows easily that:

\begin{corollary} \label{equiline}
 Let $\Gamma\subset PSL(3,\C)$ be a discrete group and $\ell\in Gr(\P^2)$ be  an invariant line under $\Gamma$ such that the action of $\Gamma$ restricted to $\ell$ is trivial, then there is a point $p\in \P^2$ such that  $Eq(\Gamma)=\Bbb{P}^2\setminus (\ell\cup \{p \})$.
\end{corollary}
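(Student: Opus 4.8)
The plan is to deduce the corollary from the preceding lemma together with the description $Eq(\Gamma)=\P^2\setminus\bigcup_{g\in Lim(\Gamma)}Ker(g)$ furnished by Theorem \ref{dissubeq}. Since the lemma already normalizes $\Gamma$ into one of two explicit matrix forms, it suffices, in each case, to list the cluster points of $\Gamma$ in $SP(3,\C)$ and read off their kernels. Throughout I assume $\Gamma$ is infinite, the finite case being the trivial exception in which $Lim(\Gamma)=\emptyset$ and $Eq(\Gamma)=\P^2$.

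First I would treat conclusion (\ref{1lineq1}) of the lemma. After conjugation I may assume $\Gamma=\Gamma_G$ and $\ell=\overleftrightarrow{e_1,e_2}$, so that $[e_3]\notin\ell$ is the fixed point lying off $\ell$. The existence of an infinite-order diagonalizable element yields some $a_0\in G$ that is not a root of unity; since $G$ is discrete and no infinite subgroup of the unit circle is discrete, necessarily $|a_0|\neq 1$, and hence $G$ contains elements whose modulus tends both to $0$ and to $\infty$. For any sequence of distinct elements $\textrm{diag}(a_n,a_n,a_n^{-2})$, discreteness forbids a finite nonzero limit of the $a_n$ (that would produce a cluster point inside $PSL(3,\C)$), so along subsequences either $|a_n|\to\infty$ or $|a_n|\to 0$. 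Normalizing the lift shows that the former yields the pseudo-projective limit $[[\textrm{diag}(1,1,0)]]$, whose kernel is $\{[e_3]\}$, while the latter yields $[[\textrm{diag}(0,0,1)]]$, whose kernel is $\ell$. Thus $\bigcup_{g\in Lim(\Gamma)}Ker(g)=\ell\cup\{[e_3]\}$ and $Eq(\Gamma)=\P^2\setminus(\ell\cup\{[e_3]\})$; I take $p=[e_3]$.

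For conclusion (\ref{1lineq2}) of the lemma, every $\gamma\in\Gamma$ has a lift whose diagonal entries $a,a,a^{-2}$ satisfy $|a|=1$ and whose only further nonzero entries are $b,c$ in the third column. Given distinct elements with lift data $a_n,b_n,c_n$, the diagonal stays on the unit circle, so the sole obstruction to convergence in $PSL(3,\C)$ is $M_n:=\max(|b_n|,|c_n|)\to\infty$, which discreteness forces to occur. Dividing the lift by $M_n$ and passing to a subsequence produces a cluster point whose matrix vanishes except for a nonzero third column $(s,t,0)$; its kernel is exactly $\{[x:y:0]\}=\ell$. Hence every cluster point has kernel $\ell$, so $Eq(\Gamma)=\P^2\setminus\ell$, and choosing $p$ to be any point of $\ell$ yields $Eq(\Gamma)=\P^2\setminus(\ell\cup\{p\})$, as required.

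The step I expect to be the main obstacle is the complete and correct enumeration of $Lim(\Gamma)\subset SP(3,\C)$ in each case, and in particular the repeated use of discreteness to guarantee that all cluster points are genuinely degenerate, never lying in $PSL(3,\C)$, and that in case (\ref{1lineq1}) both degenerations actually occur so that $[e_3]$ is excised in addition to $\ell$. Once this bookkeeping is settled, the identification of $Eq(\Gamma)$ through Theorem \ref{dissubeq} is immediate.
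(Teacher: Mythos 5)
Your proof is correct and is exactly the derivation the paper intends: the paper offers nothing beyond ``it follows easily,'' meaning precisely what you do --- combine the two normal forms from the preceding lemma with the kernel description $Eq(\Gamma)=\P^2\setminus\bigcup_{g\in Lim(\Gamma)}Ker(g)$ of Theorem \ref{dissubeq}, check via discreteness that all cluster points are degenerate, and read off that the kernels sweep out $\ell\cup\{[e_3]\}$ in the diagonalizable case and $\ell$ alone in the unipotent-type case. Your remark that a finite group $\Gamma$ (e.g.\ the trivial group) is a genuine exception to the statement as written is a fair observation about an implicit hypothesis the paper glosses over, and restricting to infinite $\Gamma$ as you do is the right fix.
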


\begin{lemma}
Let $\Gamma\subset PSL(3,\C)$ be a discrete group such that each element $\gamma\in \Gamma$ has a lift $\tilde \gamma\in GL(3,\C)$ given by:
\[
\tilde{
\gamma}=
\left (
\begin{array}{lll}
\gamma_{11} & 0          & 0\\
0           &\gamma_{22} & \gamma_{23}\\
0           &\gamma_{32} & \gamma_{33}\\
\end{array}
\right ),
\]
then $LiG(Eq(\Gamma))\leq 3.$
\end{lemma}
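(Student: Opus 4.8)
The plan is to exploit the block structure to pin down a global fixed point and a global invariant line, and then to read off the possible shapes of the kernels appearing in the formula $Eq(\Gamma)=\P^2\setminus\bigcup_{g\in Lim(\Gamma)}Ker(g)$ supplied by Theorem \ref{dissubeq}. First I would record the elementary consequences of the hypothesis: since the off-diagonal entries of the first row and first column of every lift vanish, every $\gamma\in\Gamma$ fixes the point $[e_1]$ and preserves the line $\ell_0=\overleftrightarrow{e_2,e_3}$, acting on $\ell_0$ through the lower-right $2\times2$ block $B$ with entries $\gamma_{22},\gamma_{23},\gamma_{32},\gamma_{33}$. The zero pattern in the positions $(1,2),(1,3),(2,1),(3,1)$ is preserved both by scalar multiplication and by passing to limits in $M(3,\C)$, so every cluster point $g\in Lim(\Gamma)\subset SP(3,\C)$ admits a lift with the same block-diagonal shape.

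Next I would enumerate $Ker(g)$ for such a non-invertible block-diagonal $g$, with scalar block $g_{11}$ and $2\times2$ block $B$. A direct computation of the null space yields exactly four cases: the single point $[e_1]$ (when $g_{11}=0$ and $\det B\neq0$); a single point of $\ell_0$ (when $g_{11}\neq0$ and $B\neq 0=\det B$); the line $\overleftrightarrow{[e_1],[0:v_2:v_3]}$ through $[e_1]$, where $(v_2,v_3)$ spans $\ker B$ (when $g_{11}=0$ and $B\neq 0=\det B$); or the line $\ell_0$ itself (when $g_{11}\neq0$ and $B=0$). In particular, every \emph{line} arising as a kernel either passes through $[e_1]$ or equals $\ell_0$.

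The heart of the argument, and the step I expect to be the main obstacle, is to upgrade this from ``kernel lines'' to ``every line contained in the complement'': a priori a line $\ell\subset\P^2\setminus Eq(\Gamma)$ could be swept out by point-kernels together with infinitely many distinct line-kernels without itself being a kernel. I would dispose of this by a covering argument, which also forces the standing assumption $Eq(\Gamma)\neq\emptyset$ (otherwise $LiG(Eq(\Gamma))$ is undefined and there is nothing to prove). Suppose $\ell\subset\P^2\setminus Eq(\Gamma)$ with $[e_1]\notin\ell$ and $\ell\neq\ell_0$. Each point $q\in\ell\setminus\ell_0$ lies in some $Ker(g)$; since $q\neq[e_1]$ and $q\notin\ell_0$, the classification forces that kernel to be the line $\overleftrightarrow{[e_1],q}$, so $\overleftrightarrow{[e_1],q}\subset\P^2\setminus Eq(\Gamma)$. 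As $q$ ranges over $\ell$ minus the single point $\ell\cap\ell_0$, these lines exhaust every line of the pencil through $[e_1]$ except one, whence their union is all of $\P^2$ save one line. Thus $Eq(\Gamma)$ would be contained in a complex line and, being open, would be empty, a contradiction.

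Finally I would close the count. Granting $Eq(\Gamma)\neq\emptyset$, every complex line in $\P^2\setminus Eq(\Gamma)$ passes through $[e_1]$ or equals $\ell_0$. In any family of lines in general position at most one member can be $\ell_0$, so a family of at least four such lines would contain at least three passing through the common point $[e_1]$, contradicting the no-three-concurrent condition defining general position. Hence $LiG(Eq(\Gamma))\leq3$.
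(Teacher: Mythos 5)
Your proof is correct and takes essentially the same approach as the paper: both arguments use the fact that the block-diagonal zero pattern persists in lifts of cluster points of $\Gamma$ in $SP(3,\C)$, classify the resulting kernels as $[e_1]$, a point of $\overleftrightarrow{[e_2],[e_3]}$, a line through $[e_1]$, or $\overleftrightarrow{[e_2],[e_3]}$ itself, and then count lines in general position via $Eq(\Gamma)=\P^2\setminus\bigcup_{g\in Lim(\Gamma)}Ker(g)$. The only difference is that the paper compresses the last step into ``the conclusion follows easily,'' whereas your pencil-covering argument (ruling out a complement line missing $[e_1]$ and distinct from $\overleftrightarrow{[e_2],[e_3]}$) and the concurrency count make that step explicit.
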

\begin{proof}
Let $(\gamma_m)\subset \Gamma$ be a sequence of distinct, thus for each $m\in \Bbb{N}$ there is $\tilde{\gamma}_m\in GL(3,\Bbb{C})$ a lift of $\gamma_m$ which is given by:
\[
\tilde{
\gamma}_m=
\left (
\begin{array}{lll}
\gamma_{11}^{(m)} & 0          & 0\\
0           &\gamma_{22}^{(m)} & \gamma_{23}^{(m)}\\
0           &\gamma_{32}^{(m)} & \gamma_{33}^{(m)}\\
\end{array}
\right )
\]
where $max\{\mid \gamma_{11}^{(m)}\mid, \mid \gamma_{22}^{(m)}\mid, \mid \gamma_{33}^{(m)}\mid, \mid \gamma_{23}^{(m)}\mid, \mid \gamma_{32}^{(m)}\mid \}=1$. Since the sequences $(\gamma_{11}^{(m)})$,$(  \gamma_{22}^{(m)})$, $( \gamma_{33}^{(m)})$, $( \gamma_{23}^{(m)})$, $(\gamma_{32}^{(m)}) $ are bounded, there is a sequence $(k_m)\subset (m)$ and $\gamma_{11}$, $ \gamma_{22}$, $ \gamma_{33}$, $ \gamma_{23}$, $\gamma_{32} \in \C$   such that 
\[
\tilde{\gamma}_{k_m} \xymatrix{
\ar[r]_{m \rightarrow  \infty}&}
\left (
\begin{array}{lll}
\gamma_{11} & 0          & 0\\
0           &\gamma_{22} & \gamma_{23}\\
0           &\gamma_{32} & \gamma_{33}\\
\end{array}
\right )=\tilde \gamma
\]
where $\gamma_{11}( \gamma_{22}  \gamma_{33}-\gamma_{32}\gamma_{23})=0$. In consequence  $Ker([[\tilde \gamma]])$ is either $[e_1]$, a line passing trough $[e_1]$, $\overleftrightarrow{[e_2],[e_3]}$ or a point in $\overleftrightarrow{[e_2],[e_3]}$. Now, the conclusion follows easily. 
\end{proof}

\begin{proposition}
Let $\Gamma\subset PSL(3,\C)$ be a discrete group and  $\Omega\subset \P^2$ be a  discontinuity region for the action of $\Gamma$ which is either $Eq(\Gamma)$, $\Omega(\Gamma)$ or a maximal discontinuity region. If $4\leq LiG(\Omega)<\infty$, then for each $\mathcal{L}\in GL(\Omega)$ with $Card(\mathcal{L}=LiG(\Omega))$ it falls out that:
\begin{enumerate}
\item \label{4lin1} There is at least one   vertex of  $\mathcal{L}$;
\item \label{4lin2} If $v\in \P^2$ is a vertex of $\mathcal{L}$, then $Isot(v,\Gamma)$ is a subgroup of $\Gamma$ with finite index;
\item \label{4lin3} There are at least two vertexes  of  $\mathcal{L}$;
\item \label{4lin4} The vertexes of $ \mathcal{L}$ lie in a complex line;
\item \label{4lin5}  There are at most two vertexes  of  $\mathcal{L}$;
\item \label{4lin6} If $\ell\in Gr(\P^2)$ does not contains a vertex of $\mathcal{L}$, then its orbit under $\Gamma$ is infinite.
\end{enumerate}
 
\end{proposition}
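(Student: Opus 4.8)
The plan is to exploit the tension between having infinitely many lines outside $\Omega$ and being able to place only finitely many of them in general position. First I would record the standing reductions. Since $LiG(\Omega)\geq 4>3$, the trichotomy for $Li$ forces $Li(\Omega)=\infty$, so the set $\mathcal{M}$ of all complex lines contained in $\P^2\setminus\Omega$ is infinite; moreover $\Gamma$ is infinite (otherwise $Eq(\Gamma)=\P^2$ and $\mathcal{M}=\emptyset$). Throughout, each $\gamma\in\Gamma$ carries lines of $\mathcal{M}$ to lines of $\mathcal{M}$, because $\Omega$ is $\Gamma$-invariant. Fix $\mathcal{L}=\{\ell_1,\dots,\ell_n\}$ with $n=LiG(\Omega)$ and write $p_{ij}=\ell_i\cap\ell_j$ for its $\binom{n}{2}$ distinct intersection points. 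The maximality of $n$ yields the key observation: every $m\in\mathcal{M}\setminus\mathcal{L}$ must pass through some $p_{ij}$, since otherwise $\mathcal{L}\cup\{m\}$ would be a family of $n+1$ lines in general position inside $\P^2\setminus\Omega$.

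For (\ref{4lin1}) I would apply pigeonhole to this observation: as $\mathcal{M}$ is infinite and the $p_{ij}$ are finitely many, some $p_{ij}$ carries an infinite subfamily $\mathcal{C}\subset\mathcal{M}$; since distinct lines through a common point meet only there, $\ell_i\cap\ell_j\cap(\bigcap\mathcal{C})=\{p_{ij}\}$, so $p_{ij}$ is a vertex. For (\ref{4lin2}) I would first upgrade this to the statement that every \emph{heavy} point $w$ (one lying on infinitely many lines of $\mathcal{M}$) is in fact one of the $p_{ij}$, hence a vertex: if $w$ were none of the $p_{ij}$, then each of the infinitely many lines of $\mathcal{M}$ through $w$ would pass through some $p_{ij}$, and a point off the $p_{ij}$ lies on exactly one line through each fixed $p_{ij}$, leaving only finitely many such lines, a contradiction. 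Thus the set $H$ of heavy points equals the finite set of vertices, and since each $\gamma\in\Gamma$ sends heavy points to heavy points, $\Gamma$ permutes the finite set $H$; the stabilizer $Isot(v,\Gamma)$ of a vertex is therefore a finite-index subgroup.

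For (\ref{4lin4}) and (\ref{4lin5}) I would argue by contradiction, using finite-index subgroups on which the geometry collapses. If three vertices were in general position, then by (\ref{4lin2}) a finite-index subgroup $\Gamma_1$ would fix all three, hence be conjugate to a group of diagonal matrices; by Theorem \ref{dissubeq}(2) the kernels of its limits lie on the three coordinate lines, so $\P^2\setminus Eq(\Gamma_1)$ is contained in three lines and $LiG(Eq(\Gamma_1))\leq 3$. Likewise, if three vertices were collinear on a line $L$, a finite-index subgroup $\Gamma_1$ would fix three points of $L$, hence act trivially on $L$, and Corollary \ref{equiline} would give $Eq(\Gamma_1)=\P^2\setminus(L\cup\{p\})$, so $LiG(Eq(\Gamma_1))=1$. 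In either case $Eq(\Gamma)=Eq(\Gamma_1)$, since passing to a finite-index subgroup does not affect normality; comparing with the three possibilities for $\Omega$ (using $\Omega\subset Eq(\Gamma)$ together with maximality, resp. $\Lambda(\Gamma)\subset\P^2\setminus Eq(\Gamma)$) then forces $LiG(\Omega)\leq 3$, contradicting $LiG(\Omega)\geq 4$. Hence no three vertices are in general position, so they are collinear, which is (\ref{4lin4}); and no three are collinear either, so there are at most two vertices, which is (\ref{4lin5}).

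Finally, (\ref{4lin6}) and (\ref{4lin3}) go together. For (\ref{4lin6}) I would take the contrapositive: a line $\ell$ with finite $\Gamma$-orbit has finite-index stabilizer $\Gamma_\ell$ fixing $\ell$; intersecting with the finite-index stabilizers of the finitely many vertices from (\ref{4lin2}) yields an infinite finite-index subgroup fixing $\ell$ and every vertex, and a normal-form analysis of these block-triangular lifts, together with the discreteness of $\Gamma$ and Corollary \ref{equiline} (used to exclude a trivial action on $\ell$, which would again force $LiG(\Omega)=1$), locates a common fixed point on $\ell$ carrying infinitely many lines of $\mathcal{M}$, that is, a vertex on $\ell$. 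Granting (\ref{4lin6}), part (\ref{4lin3}) follows: if there were a single vertex $v$, it lies on exactly two lines of $\mathcal{L}$, so the remaining $n-2\geq 2$ lines of $\mathcal{L}$ avoid $v$, hence avoid the only vertex; by (\ref{4lin6}) each has infinite orbit, and passing to $Isot(v,\Gamma)$, whose orbit of such a line is still infinite but stays off $v$, the pigeonhole observation produces a heavy point distinct from $v$, a second vertex, a contradiction. I expect the main obstacle to be precisely this analysis in (\ref{4lin6}): controlling, via the admissible block-triangular normal forms and discreteness, the finite-index subgroup fixing $\ell$ and all vertices tightly enough to pin down a vertex on $\ell$. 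The counting steps (\ref{4lin1})--(\ref{4lin5}) are comparatively soft once the identification \emph{heavy point $=$ vertex} is in place.
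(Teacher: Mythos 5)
Parts (\ref{4lin1}), (\ref{4lin2}), (\ref{4lin4}) and (\ref{4lin5}) of your proposal are correct and follow essentially the paper's route. Your dictionary \emph{heavy point $=$ vertex} (every point lying on infinitely many lines of $\mathcal{M}$ must be an intersection point of two lines of $\mathcal{L}$, by maximality of $Card(\mathcal{L})$) is a clean packaging of what the paper does implicitly when it shows that images of vertices are vertices; and your treatment of (\ref{4lin4})--(\ref{4lin5}) --- pass to the finite-index subgroup fixing three vertices, obtain a diagonal group (resp.\ a group acting trivially on a line), conclude $LiG(Eq(\Gamma))\leq 3$ (resp.\ $=1$) and then force $\Omega=Eq(\Gamma)$ via Theorem \ref{dissubeq} and Corollary \ref{equiline} --- is exactly the paper's argument.

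The genuine gap is part (\ref{4lin6}), and it infects (\ref{4lin3}). For (\ref{4lin6}) you give no argument: ``a normal-form analysis \dots locates a common fixed point on $\ell$ carrying infinitely many lines of $\mathcal{M}$'' is the conclusion you need, not a proof, and you flag it yourself as the main obstacle; block-triangular lifts plus discreteness give you no mechanism for manufacturing infinitely many lines of $\mathcal{M}$ through a point of $\ell$. What actually closes this step is a contradiction with $LiG(\Omega)\geq 4$ rather than the exhibition of a vertex on $\ell$: in the paper, the finite-index subgroup $\Gamma_0$ fixing $\ell$ and all vertices fixes \emph{three} points of the line $\ell_V$ carrying the vertices (the two vertices and $\ell\cap\ell_V$), hence acts trivially on $\ell_V$, and Corollary \ref{equiline} then forces $LiG(Eq(\Gamma))=1<4$. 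But that argument consumes part (\ref{4lin3}) --- two vertices are needed to get three fixed points on $\ell_V$ --- which is precisely why the paper establishes (\ref{4lin3}) before (\ref{4lin6}); since you instead derive (\ref{4lin3}) from (\ref{4lin6}), your plan is circular unless (\ref{4lin6}) can be proved with only one vertex guaranteed. This is repairable, but it needs an ingredient absent from your sketch: if there were a single vertex $v$, then $\Gamma_0$ fixes $v$ and leaves invariant the line $\ell$ with $v\notin\ell$, so every element has a lift in $1\oplus 2$ block form, and the paper's lemma on such groups yields $LiG(Eq(\Gamma_0))\leq 3$, again contradicting $LiG(\Omega)\geq 4$; the two-vertex case is then the $\ell_V$ argument above. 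Once that case split is supplied, your deduction of (\ref{4lin3}) --- the $Isot(v,\Gamma)$-orbit of a line of $\mathcal{L}$ missing $v$ is infinite, stays off $v$, and pigeonhole produces a second heavy point --- is correct.
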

\begin{proof}
Let us prove part (\ref{4lin1}).
 Since $LiG(\Omega)\geq 4$ we conclude that $Lin(\Omega)=\infty$. On the other hand, since $LinG(\Omega)<\infty$, we deduce that there is $p_0\in \Bbb{P}^2_{\Bbb{C}}$, $\ell_1,\ell_2\in \mathcal{L}$ and a infinite set  $\mathcal{C}_0$ such that 
$ \{p_0\}= \bigcap\mathcal{C}_0=\ell_1\cap \ell_2$ and $\bigcup\mathcal{C}_0\subset \P^2\setminus\Omega$.\\

Let us prove (\ref{4lin2}).  Let $p_0\in \widetilde{\mathcal{P}}(\mathcal{L})$, then there are  $\ell_1,\ell_2\in \mathcal{L}$ and a infinite set  $\mathcal{C}_0$ such that 
$ \{p_0\}= \bigcap\mathcal{C}_0=\ell_1\cap \ell_2$ and $\bigcup\mathcal{C}_0\subset \P^2\setminus\Omega$. Let $\gamma\in \Gamma$, then there is $p_1\in \P^2$, $\widetilde{\ell_1},\widetilde{\ell_2}\in \mathcal{L}$ and a infinite set  $\mathcal{C}_1\subset \mathcal{C}_0$ such that 
$ \{p_1\}= \bigcap\gamma(\mathcal{C}_0)=\widetilde{\ell_1}\cap\widetilde{ \ell_2}$. In consequence $\gamma(p_0)=p_1$. Which proves the assertion.\\

Let us prove (\ref{4lin3}). Let $p_0\in \P^2$ be a vertex of $\mathcal{L}$, thus there are $\ell_3,\ell_3\in \mathcal{L}$ such that $p_0\notin(\ell_3,\ell_4)$. Then the orbits of $\ell_3$ or $\ell_4$ under $\Gamma$ are  infinite. Observe that, if there is $\gamma\in \Gamma$ such that $\gamma(p_0)\in \ell_1$, the assertion follows easily. So let us assume that  $p_0\notin \Gamma \ell_1$. Since $LiG(\Omega)=k_0<\infty$ and $\Gamma \ell_1$ is infinite, it falls out that there is  $p_1\in \P^2\setminus\{p_0\}$, $\ell_1,\ell_2\in \mathcal{L}$ and a infinite set  $\mathcal{C}_3\in Gr(\P^2)$ an infinite set  such that 
$ \{p_1\}= \bigcap \mathcal{C}_3=\widetilde{\ell_1}\cap\widetilde{ \ell_2}$ and $\bigcup \mathcal{C}_3\subset \P^2\setminus \Omega$. Which concludes the proof.\\

Let us prove (\ref{4lin4}). On the contrary, let us assume that the vertex of $\mathcal{L}$ does not lie in a complex line, thus there is a set $C\subset \P^2$ of vertexes of $\mathcal{L}$ such that $Card(C)=3$. Let   $\Gamma_0=\bigcup_{x\in C} Isot(x, \Gamma) $, thus $\Gamma_0$ is a  subgroup of $\Gamma$ with finite index. Thus after conjugating by a projective transformation, if it is necessary, we can assume that $C=\{[e_1],[e_2],[e_3]\}$.  In consequence, each element $\gamma\in \Gamma$ has a lift $\widetilde \gamma\in SL(3,\C)$ given by:
\[
\widetilde\gamma=
\left (
\begin{array}{lll}
\gamma_{11} & 0           & 0\\
0           & \gamma_{22} & 0\\
0           & 0           & \gamma_{33}\\
\end{array}
\right).
\]
where $\gamma_{11}\gamma_{22}\gamma_{33}=1$. Which is a contradiction.  \\

Let us prove (\ref{4lin5}). If this is not the case, then there is a set $C_1$ of vertexes of $\mathcal{L}$ such that $Card(C_1)=3$. Set  $\Gamma_0=\bigcap_{v\in C_1} Isot(v,\Gamma) \subset $ is a subgroup of $\Gamma$ with finite index fixing 
at least 3 points in the line $\ell$ which contains $C_1$. Thus $\Gamma_0$ fixs each point in $\ell$. By corollary (\ref{equiline}), it follows that $Eq(\Gamma)=\P^2\setminus \ell=\P^2\setminus L_0(\Gamma_0)$, which is a contradiction. \\

Let us prove (\ref{4lin6}). On the contrary, let us assume that there is $\ell_0\in Gr(\P^2)$ an element with infinite orbit which is  disjoint from the vertexes of $\mathcal{L}$. Thus   
$\Gamma_0=Isot(\ell_0,\Gamma) \cap(\bigcap_{v\in V}	 Isot(v,\Gamma))$,   is a subgroup of $\Gamma$ with finite index fixing $\ell_0$ and each vertex $\mathcal{L}$, here $V$ is the set of vertex of $\mathcal{L}$. Let $\ell_V$ be the line which contains the vertexes of $\mathcal{L}$. Then  $\Gamma_0$ leaves invariant $\ell_0\cap \ell$. In consequence leaves invariant 3 points in $\ell$. Therefore, $\Gamma_0$ leaves invariant each point in $\ell$. By corollary (\ref{equiline}), it follows that there is a point $q\in \P^2$ such that $Eq(\Gamma)=\P^2\setminus (\ell\cup\{q\})=\P^2\setminus L_0(\Gamma)$. Which is a contradiction. 
\end{proof}

\begin{theorem}
Let $\Gamma\subset PSL(3,\Bbb{C})$ be a discrete subgroup and $\Omega\subset \Bbb{P}^2_{\Bbb{C}}$ be either $Eq(\Gamma)$, $\Omega(\Gamma)$, $\Bbb{P}^2_{\Bbb{C}}\setminus C(\Gamma)$ or a maximal region where $\Gamma$ acts properly  discontinuously, then $LiG(\Omega)=1,2,3,4$ or $\infty$.
\end{theorem}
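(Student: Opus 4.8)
The plan is to prove the equivalent assertion that a finite $LiG(\Omega)$ never exceeds $4$; since the complement of $\Omega$ always carries at least one complex line (so $Li(\Omega)\ge 1$ by the previous theorem, and hence $LiG(\Omega)\ge 1$, a single line being a general-position family), this gives $LiG(\Omega)\in\{1,2,3,4,\infty\}$. First I would split on $Li(\Omega)$. If $Li(\Omega)<\infty$, the previous theorem forces $Li(\Omega)\in\{1,2,3\}$, and since any family in general position outside $\Omega$ is in particular a subfamily of all lines outside $\Omega$, one has $LiG(\Omega)\le Li(\Omega)\le 3$. So the whole content lies in the case $Li(\Omega)=\infty$, where I must rule out a finite value $LiG(\Omega)=k\ge 5$.

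Assume then $Li(\Omega)=\infty$ and $5\le LiG(\Omega)=k<\infty$, and fix a family $\mathcal{L}\in LG(\Omega)$ with $Card(\mathcal{L})=k$. Because $4\le k<\infty$, the previous proposition applies: $\mathcal{L}$ has exactly two vertices $v_1,v_2$, these lie on a common line $\ell=\overleftrightarrow{v_1,v_2}$, and $\Gamma_0:=Isot(v_1,\Gamma)\cap Isot(v_2,\Gamma)$ has finite index in $\Gamma$, fixes $v_1$ and $v_2$, and hence leaves $\ell$ invariant. Since the $k$ lines are in general position no three are concurrent, so at most two lines of $\mathcal{L}$ run through each vertex; as $k\ge 5$, there is a line $\ell_0\in\mathcal{L}$ containing neither $v_1$ nor $v_2$. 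By part (vi) of the previous proposition its orbit $\Gamma\ell_0$ is infinite, and since $\Gamma_0$ has finite index the orbit $\Gamma_0\ell_0$ is infinite as well; every one of these lines lies in $\P^2\setminus\Omega$ and, because $\gamma\in\Gamma_0$ fixes $v_1,v_2$ while $\ell_0$ avoids them, none of them passes through $v_1$ or $v_2$.

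The heart of the argument is to convert this infinite family into a forbidden third vertex. I would first record the combinatorial fact that, since at most $k$ lines of $\Gamma_0\ell_0$ can be in general position, some point $p$ must lie on infinitely many lines of $\Gamma_0\ell_0$: otherwise a greedy selection, each step choosing a new line of the infinite family $\Gamma_0\ell_0$ avoiding the finitely many lines through the finitely many intersection points already produced, would yield $k+1$ lines in general position inside $\P^2\setminus\Omega$. Such a $p$ is a pencil center of infinitely many lines of $\P^2\setminus\Omega$ and satisfies $p\neq v_1,v_2$ by the last remark of the previous paragraph. Having a third pencil center distinct from $v_1,v_2$, I would realize $v_1,v_2,p$ as vertices of a family of maximal cardinality $LiG(\Omega)=k$ (choosing two lines from each pencil in general position and then enlarging, which is possible because $Li(\Omega)=\infty$), thereby producing a maximal family with three vertices and contradicting the previous proposition, whose parts (iv) and (v) force the vertices of a maximal family to be at most two and to be collinear. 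The main obstacle is exactly this final step: promoting the pencil center $p$ to a genuine vertex of a \emph{maximal} general-position family, i.e. controlling the enlargement so that no added line is forced through $v_1$, $v_2$ or $p$ and the three centers survive as vertices; this requires the combinatorial concentration lemma above, showing that an infinite line family with finite general-position number is covered by finitely many pencils, together with a careful count ensuring that three such pencil centers can indeed coexist inside a single maximum configuration.
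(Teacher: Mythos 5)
Your skeleton (maximum family $\mathcal{L}$, exactly two vertices $v_1,v_2$ by the proposition, a line $\ell_0\in\mathcal{L}$ missing both, an infinite orbit by part (vi), and the greedy concentration lemma producing a pencil center $p$ on infinitely many lines of $\Gamma_0\ell_0$) is sound and matches the paper's proof up to that point; your $\Gamma_0$-trick guaranteeing $p\neq v_1,v_2$ is a nice touch. The genuine gap is the final step, which you yourself flag: you cannot get a \emph{maximum} general-position family with $v_1,v_2,p$ as vertices by ``choosing two lines from each pencil and then enlarging.'' Families of lines in general position inside $\P^2\setminus\Omega$ have no extension property, and $Li(\Omega)=\infty$ does not help: the infinitely many lines outside $\Omega$ may all lie in finitely many pencils --- which is precisely the regime you are in --- so a greedy enlargement of your six lines can stall strictly below $k=LiG(\Omega)$. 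Since the vertex proposition applies \emph{only} to families of cardinality exactly $k$, a sub-maximum configuration with three pencil centers yields no contradiction (note that when $k=5$ your six lines already contradict maximality with no enlargement at all, so the unresolved case is exactly $k\geq 6$).

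The fix is not to build a new family but to run an exchange argument on $\mathcal{L}$ itself: if at most one line of $\mathcal{L}$ passed through $p$, delete it and insert two lines of the infinite pencil at $p$, chosen to avoid the finitely many intersection points of the remaining lines (each such point excludes at most one pencil line, namely the line joining it to $p$); the result is at least $k+1$ lines in general position in $\P^2\setminus\Omega$, contradicting $LiG(\Omega)=k$. Hence exactly two lines of $\mathcal{L}$ pass through $p$, so $p$ is already a \emph{third} vertex of the original family $\mathcal{L}$, contradicting part (v). This exchange step is also the mechanism implicitly behind the paper's terser endgame: there the concentration point of $\Gamma\ell$ is asserted to be one of the two vertices $p_0,p_1$, and the contradiction comes from equivariance, since $\gamma_m^{-1}(p)$ must then be a vertex lying on $\ell$ while $\ell$ avoids the vertices. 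With your $\Gamma_0$ normalization the conclusion is even more immediate --- the concentration point of $\Gamma_0\ell_0$ avoids $v_1,v_2$ yet must be a vertex of $\mathcal{L}$ --- so your route becomes a clean (arguably cleaner) variant of the paper's proof once the exchange lemma replaces the unjustified enlargement.
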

\begin{proof}
On the contrary let us assume that $4<LiG(\Omega)<\infty$, then there is $\mathcal{L}\in LG(\Omega)$ such that $Card(\mathcal{L})=LiG(\Omega)$. Let $p_0,p_1$ be the vertex of $\mathcal{L}$. Then there is $\ell\in \mathcal{L}$, disjoint from $\{p_0,p_1\}$. In consequence, previous Lemma yields that $\Gamma \ell$ is an infinite set in $Gr(\P^2)$. Thus there is $p\in \{p_0,p_1\}$ and $(\gamma_m)\subset \Gamma$ such that $\gamma_m \ell$ is an infinite set in  $Gr(\P^2)$ and $\bigcap_{m\in \Bbb{N}} \gamma_m\ell=\{p\}$. Then $\{\gamma_{m}^{-1}(p):m\in \Bbb{N}\}\subset \ell\cap \{p_0,p_1\}$. Which is a contradiction.
\end{proof}

\section{The Limit Set}\label{s:limit}

\begin{lemma}
Let $\Gamma\subset PSL(3,\Bbb{C})$ be a discrete group which satisfies  $LiG(\Bbb{P}^2_{\Bbb{C}}\setminus C(\Gamma))=1$. If $\ell\in Gr(\P^2)$ is the unique line in $\P^2\setminus C(\Gamma)$ and $\tilde \Gamma$ denotes the action of $\Gamma$ restricted to $\ell$, then $\tilde \Gamma$ does not contains loxodromic elements.
\end{lemma}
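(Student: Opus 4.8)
The plan is to argue by contradiction. Suppose $\tilde\Gamma$ contains a loxodromic element; it is the restriction $\gamma|_\ell$ of some $\gamma\in\Gamma$. First I would record that $\ell$ is $\Gamma$-invariant: since $C(\Gamma)$ is a $\Gamma$-invariant closed set, the open set $\P^2\setminus C(\Gamma)$ is $\Gamma$-invariant, so for every $g\in\Gamma$ the line $g\ell$ is again contained in $\P^2\setminus C(\Gamma)$; by the uniqueness of $\ell$ this forces $g\ell=\ell$. In particular the restriction morphism $\Gamma\to \mob$ with image $\tilde\Gamma$ is well defined, and the $\langle\gamma\rangle$-orbit of every point of $\ell$ stays inside $\ell$.

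Next, viewing $\gamma|_\ell$ as a M\"obius transformation of $\ell\cong\P^1$, it has an attracting fixed point $p_+\in\ell$ (and a repelling one $p_-\in\ell$), and its multiplier has modulus different from $1$, so $\gamma|_\ell$ and hence $\gamma$ have infinite order. The whole point will be to show that $p_+$ lies in the Kulkarni limit set $\Lambda(\gamma)=\Lambda(\langle\gamma\rangle)$ of the cyclic group: since $\gamma\in\Gamma$ we have $\Lambda(\gamma)\subset\bigcup_{g\in\Gamma}\Lambda(g)\subset C(\Gamma)$, and this contradicts $p_+\in\ell\subset \P^2\setminus C(\Gamma)$. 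To locate $p_+$ in $\Lambda(\gamma)$ I would use the first Kulkarni stratum $L_1(\langle\gamma\rangle)$. Choose a base point $z\in\ell\setminus\{p_+,p_-\}$ with $z\notin L_0(\langle\gamma\rangle)$; then the loxodromic dynamics on $\ell$ give $\gamma^n z\to p_+$ inside $\ell$, hence in $\P^2$, so $p_+$ is a cluster point of the orbit $\langle\gamma\rangle z$ of a point outside $L_0(\langle\gamma\rangle)$, whence $p_+\in L_1(\langle\gamma\rangle)\subset\Lambda(\gamma)$.

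The one step that needs care, and which I expect to be the main obstacle, is producing the base point $z$, i.e. controlling $L_0(\langle\gamma\rangle)\cap\ell$. Here I would use that a point has infinite $\langle\gamma\rangle$-isotropy exactly when it is fixed by some power $\gamma^n$, that every power $\gamma^n|_\ell$ is again loxodromic with the very same pair of fixed points $p_\pm$, and that the fixed loci $Fix(\gamma^n)$ realize only finitely many projective configurations of points and lines as $n$ varies, so that $L_0(\langle\gamma\rangle)$ is a finite union of points and lines. Consequently $L_0(\langle\gamma\rangle)\cap\ell$ is either all of $\ell$ or a finite set. In the first case $\ell\subset L_0(\langle\gamma\rangle)\subset\Lambda(\gamma)\subset C(\Gamma)$ already contradicts the hypothesis, while in the second case $\ell$ minus a finite set still leaves infinitely many admissible $z$. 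Either way the contradiction is reached, so $\tilde\Gamma$ contains no loxodromic element.
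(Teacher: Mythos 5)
Your argument proves a different (and in fact vacuous) statement, because it rests on a misreading of the hypothesis that the statement's own wording invites but that the paper cannot mean. By the paper's definition, $LiG(\Omega)$ counts lines in general position lying in the \emph{complement} of $\Omega$; so the hypothesis $LiG(\P^2\setminus C(\Gamma))=1$ says that $C(\Gamma)$ contains exactly one complex line. Consequently no line whatsoever can be contained in $\P^2\setminus C(\Gamma)$: every complex line in $\P^2$ meets the line sitting inside $C(\Gamma)$. Under your reading ($\ell\subset \P^2\setminus C(\Gamma)$, i.e.\ $\ell\cap C(\Gamma)=\emptyset$) the hypothesis is therefore unsatisfiable and the lemma holds vacuously. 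The intended meaning --- despite the typo ``in $\P^2\setminus C(\Gamma)$'' --- is that $\ell$ is the unique line contained \emph{in} $C(\Gamma)$; this is confirmed by the next lemma in the paper, which under the same hypothesis speaks of ``the unique line in $C(\Gamma)$'' and feeds the present lemma's conclusion into the $SO(3)$ versus $EP(\C)$ dichotomy for $\Gamma\vert_\ell$.

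Under the correct reading your contradiction evaporates. The whole design of your proof is to place the attracting fixed point $p_+$ of $\gamma\vert_\ell$ inside $\Lambda(\gamma)\subset C(\Gamma)$ and then contradict $p_+\in\ell\subset\P^2\setminus C(\Gamma)$; but since in fact $\ell\subset C(\Gamma)$, the conclusion $p_+\in C(\Gamma)$ is automatic and contradicts nothing. (Your preliminary steps --- invariance of $\ell$, the structure of $L_0(\langle\gamma\rangle)$ as a finite union of points and lines, the clustering $\gamma^nz\to p_+$ --- are fine, but they only see dynamics \emph{inside} $\ell$, and everything inside $\ell$ is allowed to lie in $C(\Gamma)$.) The real content of the lemma is that a loxodromic restriction to $\ell$ forces a \emph{second} line, distinct from $\ell$, into $C(\Gamma)$, violating $LiG(\P^2\setminus C(\Gamma))=1$; that second line comes from the behaviour of $\gamma_0$ transverse to $\ell$, not from the dynamics on $\ell$. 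This is exactly what the paper's proof extracts: putting a lift of $\gamma_0$ in Jordan normal form (diagonal $\mathrm{diag}(\lambda_1,\lambda_2,1)$ with $\vert\lambda_1\vert<\vert\lambda_2\vert$, or a loxodromic-plus-parabolic block), it computes $\P^2\setminus Eq(\langle\gamma_0\rangle)$, which equals the limit set of the cyclic group and hence lies in $C(\Gamma)$, and checks case by case that it always contains a line different from $\overleftrightarrow{[e_1],[e_2]}=\ell$ (for instance $\overleftrightarrow{[e_2],[e_3]}$ or $\overleftrightarrow{[e_1],[e_3]}$). That yields two distinct lines in $C(\Gamma)$, the desired contradiction. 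To repair your proof you would need to replace the fixed-point-on-$\ell$ argument by this kind of analysis of the full $3\times 3$ eigenstructure of $\gamma_0$.
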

\begin{proof} On the contrary let us assume that there is
 $\gamma_0\in \Gamma$ such that $\gamma_0\mid_{\ell}$ is loxodromic. Thus Normal Jordan Form Theorem yields that there is $\tilde{\gamma}_{0}\in GL(3,\C)$ a lift of $\gamma_0$ and $\gamma\in GL(3,\C)$ that $[[\gamma]](\ell)=\overleftrightarrow{[e_1],[e_2]}$ and $\gamma\tilde{\gamma}_{0}\gamma^{-1}$ is either:
 \[
A=\left (
\begin{array}{lll}
\lambda_1  & 0            & 0\\
0            &  \lambda_2 & 0\\
0            &  0            & 1\\
\end{array}
\right);\,
C=
\left (
\begin{array}{lll}
\lambda    & 0 & 0\\
0            & 1 & 1\\
0            & 0 & 1\\
\end{array}
\right), 
\]
where $\vert \lambda_1\vert<\vert \lambda_2\vert$ and $\vert \lambda\vert\neq 1$. A simple calculation shows:
\[
\P^2\setminus Eq(<[A]>)=
\left \{
\begin{array}{ll}
\overleftrightarrow{[e_2],[e_3]}\cup \{[e_1]\} & \textrm{ if } \vert \lambda_2 \vert=1;\\
\overleftrightarrow{[e_1],[e_3]}\cup \{[e_2]\} & \textrm{ if } \vert \lambda_1 \vert=1;\\
\overleftrightarrow{[e_2],[e_1]}\cup \overleftrightarrow{[e_2],[e_3]} & \textrm{ if } \vert \lambda_2\vert <1;\\
\overleftrightarrow{[e_1],[e_3]}\cup \overleftrightarrow{[e_2],[e_3]} & \textrm{ if } \vert \lambda_1\vert < 1 < \vert \lambda_2 \vert;\\
\overleftrightarrow{[e_1],[e_3]}\cup \overleftrightarrow{[e_2],[e_1]} & \textrm{ if } \vert \lambda_1\vert>1;\\
\end{array}
\right.
\]
\[
\P^2\setminus Eq(<[B]>)=
\overleftrightarrow{[e_2],[e_1]}\cup \overleftrightarrow{[e_2],[e_3]} .
\]
Which is not possible, since $[[\gamma]](\ell)=\overleftrightarrow{[e_1],[e_2]}$.
\end{proof}
\begin{lemma} [See \cite{cano}]
Let $\Gamma\subset PSL(2,\C)$  be a group without loxodromic elements, then $\Gamma$ is conjugated to a subgroup of either $SO(3)$ or $EP(\C)$, where
\[
EP(\C)=
\left \{
\left (
\begin{array}{ll}
a & b\\
0 & 1
\end{array}
\right )
: 
\vert a\vert=1  \textrm{ and } b\in \C
\right \}.
\] 
\end{lemma}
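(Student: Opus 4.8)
The plan is to work in the Poincar\'e model, identifying $PSL(2,\C)$ with the orientation preserving isometry group of hyperbolic $3$-space $\H^3$, whose ideal boundary $\partial\H^3$ is the Riemann sphere $\widehat{\C}$ on which $\Gamma$ acts by M\"obius maps. Lifting $\gamma\in\Gamma$ to $SL(2,\C)$, it is loxodromic exactly when $\textrm{tr}(\widetilde\gamma)\notin[-2,2]$, so the hypothesis says every non-trivial element is \emph{elliptic} (a rotation fixing a geodesic axis of $\H^3$ pointwise, with two fixed points on $\partial\H^3$) or \emph{parabolic} (fixing a single point of $\partial\H^3$). The strategy is to establish the dichotomy: either $\Gamma$ fixes a point of $\partial\H^3$, in which case I obtain conjugacy into the stabilizer of $\infty$ and hence into $EP(\C)$; or $\Gamma$ has a finite orbit in $\overline{\H^3}=\H^3\cup\partial\H^3$ from which a genuine fixed point in $\H^3$ is extracted, giving conjugacy into the point stabilizer $PSU(2)\cong SO(3)$.

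First I would handle the case in which $\Gamma$ fixes a point $p\in\partial\H^3$. Conjugating $p$ to $\infty$ turns $\Gamma$ into a group of affine maps $z\mapsto az+b$; such a map with $a\ne1$ has a finite fixed point and is loxodromic unless $|a|=1$, while $a=1$ gives a translation, so the no-loxodromic hypothesis forces $|a|=1$ for every element and $\Gamma\subset EP(\C)$ up to conjugacy. The point is therefore to guarantee such a common boundary fixed point whenever one should exist. This is immediate once a parabolic is present: if $g\in\Gamma$ is parabolic fixing $p$ and some $h\in\Gamma$ moved $p$, then $g$ and the parabolic $hgh^{-1}$ would fix distinct points; normalizing $g$ to $z\mapsto z+1$ one computes $\textrm{tr}\big(g^{\,n}(hgh^{-1})\big)=2+nc$ with $c\ne0$, which leaves $[-2,2]$ for large $n$, producing a loxodromic element, a contradiction. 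Hence the fixed point of any parabolic is global, and this subcase lands in $EP(\C)$.

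It remains to treat a group all of whose non-trivial elements are elliptic and which fixes no point of $\partial\H^3$. Here I would use that $\Gamma$ is \emph{elementary}, i.e.\ has a finite orbit in $\overline{\H^3}$, a consequence of the structural fact that every non-elementary subgroup of $PSL(2,\C)$ contains loxodromic elements, which $\Gamma$ does not. Since no single boundary point is fixed, the finite orbit is either contained in $\H^3$, in which case the circumcentre of the orbit is a global fixed point of $\Gamma$ in $\H^3$ by the Cartan fixed point theorem for the Hadamard manifold $\H^3$; or it is a pair on $\partial\H^3$, in which case $\Gamma$ stabilizes the geodesic joining them and, the no-loxodromic condition forcing the endpoint-fixing elements to have unit modulus multiplier, fixes the midpoint of that geodesic; or it consists of at least three boundary points, whose pointwise stabilizer is trivial, forcing $\Gamma$ to be finite. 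In each instance $\Gamma$ fixes a point of $\H^3$ or is finite, hence preserves a Hermitian form and is conjugate into $PSU(2)\cong SO(3)$.

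The hard part is the purely elliptic case, and more precisely the engine behind it: the fact that a subgroup with no loxodromic element is elementary. I would either invoke this directly or reprove it through the two-generator analysis of rotation axes, where two elliptic elements whose axes are disjoint in $\H^3$, or share a single ideal endpoint, generate a subgroup containing a loxodromic or parabolic element, so that the absence of both forces all axes to meet in $\H^3$ and the orbits to be finite; this is the genuine technical content. Once elementariness is secured, the extraction of a fixed point of $\H^3$ by the circumcentre/averaging argument, and the affine computation giving $EP(\C)$, are routine.
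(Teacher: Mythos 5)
The paper itself gives no proof of this lemma --- it is imported wholesale from \cite{cano} (hence the tag ``See \cite{cano}'') --- so there is no internal argument to compare yours against; judged on its own terms, your proposal is the standard classification-of-elementary-groups proof (essentially Beardon's) and its skeleton is sound. The parabolic step is correct: the trace computation $\mathrm{tr}(g^n\,hgh^{-1})=2\pm nc$ with $c\neq 0$ does force the fixed point of any parabolic to be global, and the affine normalization together with the no-loxodromic constraint $|a|=1$ lands exactly in $EP(\C)$. Two caveats, both of which you partly acknowledge. First, the entire weight rests on ``no loxodromic elements implies elementary''; in the standard sources this is \emph{deduced from} the theorem that a purely elliptic group has a common fixed point in $\H^3$ (Beardon, Thms.\ 4.3.7 and 5.1.3), so quoting elementariness amounts to quoting nearly the whole content of the present lemma, while your proposed self-contained substitute (the two-generator analysis of axes) is genuinely incomplete as sketched: showing that all axes meet pairwise is not yet a common fixed point, and closing that gap is precisely the hard step of 4.3.7. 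Relying on the literature here is legitimate --- the paper under review does exactly that --- but it should be flagged as a citation rather than half-reproved. Second, in the two-boundary-point orbit case, ``fixes the midpoint of that geodesic'' is loose: the endpoint-preserving elements $z\mapsto az$, $|a|=1$, fix the whole axis pointwise, and it is the swapping elements $z\mapsto a/z$ that single out a point; one needs the further observation that the product of two swaps is $z\mapsto (a/a')z$, so the absence of loxodromic elements forces $|a|=|a'|$, whence all swaps fix one and the same point of the axis, which the rotations also fix. With these two repairs the argument is complete and is surely the argument the cited source has in mind.
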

\begin{lemma} \label{l:lic1}
Let $\Gamma\subset PSL(3,\Bbb{C})$ be a discrete group such that $LiG(\Bbb{P}^2_{\Bbb{C}}\setminus C(\Gamma))=1$. If $\ell$ is the unique line in $C(\Gamma)$, it holds:
\begin{enumerate}
\item \label{i:1lic1} If $\Gamma\vert_\ell $ is conjugated to a group of $S0(3)$, then $LiG(Eq(\Gamma))=1$;
\item \label{i:2lic1} If $\Gamma\vert_\ell $ is conjugated to a group of $EP(\C)$, then either $LiG(Eq(\Gamma))\leq 2$ or $Eq(\Gamma)=\emptyset$.
\end{enumerate}
\end{lemma}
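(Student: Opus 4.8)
The plan is to describe $Eq(\Gamma)$ through its complement, using Theorem~\ref{dissubeq}, namely $Eq(\Gamma)=\P^2\setminus\bigcup_{g\in Lim(\Gamma)}Ker(g)$, and to control the kernels of all limit maps from the structure of $\Gamma|_\ell$. After conjugating by a projective transformation (which is permissible, since the conjugation of $\Gamma|_\ell$ inside $\mathrm{Bihol}(\ell)$ is induced by an element of $PSL(3,\C)$ preserving $\ell$), I assume $\ell=\overleftrightarrow{[e_1],[e_2]}$, with $\Gamma|_\ell$ lying inside $SO(3)\cong PSU(2)$ in case (\ref{i:1lic1}) and inside $EP(\C)$ in case (\ref{i:2lic1}). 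Because $\ell$ is $\Gamma$-invariant, every $\gamma\in\Gamma$ has a lift that is block upper triangular, with a $2\times 2$ block $A_\gamma$ on $\mathrm{span}(e_1,e_2)$ representing $\gamma|_\ell$ and a vanishing lower-left $2\times 1$ block. I would first record that $\ell\subset C(\Gamma)\subset\P^2\setminus Eq(\Gamma)$, so $\ell$ is a line of the complement and $LiG(Eq(\Gamma))\geq 1$ whenever $Eq(\Gamma)\neq\emptyset$.

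The core is a classification of $Ker(g)$ for $g\in Lim(\Gamma)$. I would normalize each lift to unit norm, pass to a convergent subsequence, and note that the limit $g$ is again block upper triangular; solving $gv=0$ by back-substitution shows that $Ker(g)$ is empty, a single point, the line $\ell$, or a line through $[e_1]$, according to the rank of $g$ and the vanishing pattern of its diagonal and superdiagonal entries. In case (\ref{i:1lic1}) the decisive point is that $A_\gamma$ is conformal, i.e.\ a scalar multiple of a unitary matrix, so its singular values coincide; hence a normalized limit of the blocks is either invertible (rank $2$) or zero (rank $0$), never rank $1$. Tracking this through the kernel computation, $Ker(g)$ can only be a point, empty, or $\ell$, so no line other than $\ell$ occurs; thus $\P^2\setminus Eq(\Gamma)$ is the single line $\ell$ together with isolated points, and $LiG(Eq(\Gamma))=1$ once one checks that $Eq(\Gamma)\neq\emptyset$ (the point kernels cannot exhaust $\P^2\setminus\ell$).

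In case (\ref{i:2lic1}) the block $A_\gamma$ now carries the unbounded translation part of $EP(\C)$, so a normalized limit of the blocks may be rank $1$; this is precisely what creates the extra kernel lines. The structural fact I would exploit is that $|a_\gamma|=1$ forces $|(A_\gamma)_{11}|=|(A_\gamma)_{22}|$ for all $\gamma$, hence $|g_{11}|=|g_{22}|$ in every limit, so the diagonal of the $\ell$-block cannot degenerate asymmetrically. Feeding this into the kernel computation, every line arising as some $Ker(g)$ — whether $\ell$ or a rank-$1$ degeneration of the form $\overleftrightarrow{[e_1],[0:*:*]}$ — passes through the common fixed point $[e_1]$ of $\Gamma$. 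Consequently all lines contained in $\P^2\setminus Eq(\Gamma)$ are concurrent at $[e_1]$, no three are in general position, and $LiG(Eq(\Gamma))\leq 2$; the alternative $Eq(\Gamma)=\emptyset$ is exactly the case where this pencil through $[e_1]$ together with the point kernels fills $\P^2$. The main obstacle I anticipate is the bookkeeping in the kernel classification: one must verify that the block-triangular shape and the equality $|g_{11}|=|g_{22}|$ genuinely survive normalization, and that every degenerate limit yields a kernel line through $[e_1]$ rather than some unexpected line avoiding $[e_1]$.
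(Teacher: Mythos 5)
Your proposal is correct and takes essentially the same route as the paper's own proof: both describe $\P^2\setminus Eq(\Gamma)$ as $\bigcup_{g\in Lim(\Gamma)}Ker(g)$ via Theorem~\ref{dissubeq}, take block-triangular lifts adapted to the invariant line $\ell$, and classify the kernels of normalized limits, with the $SO(3)$-block (compactness, i.e.\ your equal singular values) excluding rank-one degenerations in case (i), and the $EP(\C)$-block producing kernel lines that are all concurrent at the common fixed point in case (ii). The paper organizes the limit analysis by whether $\Vert\tilde\gamma_m\Vert$ stays bounded rather than by the rank of the limit block, but that is the same dichotomy in different clothing, and the covering technicalities you flag at the end are glossed at exactly the same level in the paper.
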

\begin{proof}
Let us prove (\ref{i:1lic1}).  After conjugating with a projective transformations if it is necessary, we may assume that $\ell=\overleftrightarrow{[e_2],[e_3]}$ and $\Gamma\vert_{\ell}$ is a subgroup of $SO(3)$. Now, 
let $(\gamma_m) \subset \Gamma$ be a sequence of distinct elements, then for each $m\in \Bbb{N}$ there is $a_m\in \C $, $b_m=(b_{j1}^{(m)})\in M(2\times 1,\C)$, $\tilde{\gamma}_m\in GL(3,\C)$ and $H_m=(h_{ij}^{(m)})\in SO(3)$ such that $\tilde{\gamma_0}$ is a lift of $\gamma_0$ and 
\[
\tilde{\gamma}_m=
\left (
\begin{array}{ll}
a_m &0\\
b_m & H_m\\
\end{array}
\right ).
\]
On the other hand, since $SO(3)$ and $\P^1$ are compact, we may assume that there is $a\in \P^1$ and $H\in SO(3)$ such that $a_m \xymatrix{
\ar[r]_{m \rightarrow  \infty}&}a$ and $H_m\xymatrix{
\ar[r]_{m \rightarrow  \infty}&} H$. Define $\vert \gamma_m\vert=max\{\vert a_m\vert, \vert h_{ij}^{(m)} \vert, \vert b_{j1}^{(m)}\vert, \,j,i\in \{1,2\}  \} $, we must consider the following cases:\\

Case 1. $(\vert \gamma_m\vert)$ is unbounded. Thus there is a subsequence of $(\vert \gamma_m\vert)$, which is still denoted $(\vert \gamma_m\vert)$, such that $\vert \gamma_m\vert \xymatrix{
\ar[r]_{m \rightarrow  \infty}&}  \infty$. In consequence $(\vert \gamma_m\vert^{-1}\tilde{\gamma}_m)$ is a bounded sequence in $M(3,\C)$, thus there is $\gamma\in M(3,\C)\setminus \{0\}$ and a subsequence of $(\vert \gamma_m\vert^{-1}\tilde{\gamma}_m)$, still denoted $(\vert \gamma_m\vert^{-1}\tilde{\gamma}_m)$, which satisfies
$\vert \gamma_m\vert^{-1} \tilde {\gamma}_m\xymatrix{
\ar[r]_{m \rightarrow  \infty}&}  \gamma$. Since $\vert \gamma_m\vert^{-1} H_m\xymatrix{
\ar[r]_{m \rightarrow  \infty}&}  0$, we conclude that there is $a\in \C$ and $b\in M(2\times 1,\C)$ such that:
\[
\gamma=
\left (
\begin{array}{ll}
a &0\\
b &0\\
\end{array}
\right ).
\]
In consequence $Ker(\gamma)=\overleftrightarrow{[e_2],[e_3]}$.\\

Case 2. $(\vert \gamma_m\vert)$ is bounded. Since $\Gamma$ is discrete, we conclude that   $a_m \xymatrix{
\ar[r]_{m \rightarrow  \infty}&}  0$. Moreover, $(\tilde{\gamma}_m)$ is a bounded sequence in $M(3,\C)$, thus there is $\gamma\in M(3,\C)\setminus \{0\}$ and a subsequence of $( \tilde{\gamma}_m)$, still denoted $(\tilde{\gamma}_m)$, which satisfies
$ \tilde {\gamma}_m\xymatrix{
\ar[r]_{m \rightarrow  \infty}&}  \gamma$. Thus there are $b\in M(2\times 1,\C)$  such that: 
\[
\gamma=
\left (
\begin{array}{ll}
0 & 0 \\
b & H
\end{array}
\right ).
\]
A straighfoward calculation shows that $w_m=(1,-(H_m-a_m)^{-1}(b_m))$ is proper vector with proper value $a_m$. Moreover, it holds 
\[
w_m \xymatrix{
\ar[r]_{m \rightarrow  \infty}&} w=(1,-H^{-1}(b))
\]
and $\gamma(w)=0$. That is $Ker([[\gamma]])\subset  L_0(\Gamma)\subset C(\Gamma)$.\\

From the previous cases, it follows that  $LiG(\Gamma)=1$.\\

Let us prove (\ref{i:2lic1}).  After conjugating with a projective transformations if it is necessary, we may assume that $\ell=\overleftrightarrow{[e_2],[e_3]}$ and $\Gamma\vert_{\ell}$ is a subgroup of $EP(\C)$. Now, 
let $(\gamma_m) \subset \Gamma$ be a sequence of distinct elements, then for each $m\in \Bbb{N}$ there are $a_m,b_m,c_m,d_m\in \C $, $\theta_m\in \Bbb{R}$ and $\tilde{\gamma}_m\in GL(3,\C)$  such that $\tilde{\gamma_0}$ is a lift of $\gamma_0$ and 
\[
\tilde{\gamma}_m=
\left (
\begin{array}{lll}
a_m & 0 & 0\\
b_m & e^{\pi i\theta_m} &d_m\\
c_m & 0 & 1\\
\end{array}
\right ).
\]
 Define $\vert \gamma_m\vert=max\{\vert a_m\vert,\vert b_m\vert, \vert c_m\vert, \vert d_m\vert \} $, we must consider the following cases:\\

Case 1. $(\vert \gamma_m\vert)$ is unbounded. Thus there is a subsequence of $(\vert \gamma_m\vert)$, which is still denoted $(\vert \gamma_m\vert)$, such that $\vert \gamma_m\vert \xymatrix{
\ar[r]_{m \rightarrow  \infty}&}  \infty$. In consequence $(\vert \gamma_m\vert^{-1}\tilde{\gamma}_m)$ is a bounded sequence in $M(3,\C)$, thus there is $\gamma\in M(3,\C)\setminus \{0\}$ and a subsequence of $(\vert \gamma_m\vert^{-1}\tilde{\gamma}_m)$, still denoted $(\vert \gamma_m\vert^{-1}\tilde{\gamma}_m)$, which satisfies
$\vert \gamma_m\vert^{-1} \tilde {\gamma}_m\xymatrix{
\ar[r]_{m \rightarrow  \infty}&}  \gamma$. Thus there are $a,b,c,d\in \C$ such that: 
\[
\gamma=
\left (
\begin{array}{lll}
a & 0 & 0\\
b & 0 &d\\
c & 0 & 0\\
\end{array}
\right ).
\]
A straightforward calculation shows:
\[
Ker(\gamma)=
\left \{
\begin{array}{ll}
\overleftrightarrow{[e_2],[e_1]} & \textrm{ if } d=0;\\
\textrm{a line passing trough } [e_2] & \textrm{ if } d\neq 0,\, $a=b=0$;\\
\{e_1\} &\textrm{ in any other case }\\
\end{array}
\right .
\]

Case 2. $(\vert \gamma_m\vert)$ is bounded. Since $\Gamma$ is discrete, we conclude that   $a_m \xymatrix{
\ar[r]_{m \rightarrow  \infty}&}  0$. Moreover, $(\tilde{\gamma}_m)$ is a bounded sequence in $M(3,\C)$, thus there is $\gamma\in M(3,\C)\setminus \{0\}$ and a subsequence of $( \tilde{\gamma}_m)$, still denoted $(\tilde{\gamma}_m)$, which satisfies
$ \tilde {\gamma}_m\xymatrix{
\ar[r]_{m \rightarrow  \infty}&}  \gamma$. Thus there are $b,c,d\in \C$ and $\theta\in \Bbb{R}$ such that: 
\[
\gamma=
\left (
\begin{array}{lll}
0 & 0 & 0\\
b & e^{\pi i\theta} &d\\
c & 0 & 1\\
\end{array}
\right ).
\]
Set 
$
K_m=
\left(
\begin{array}{ll}
e^{\pi i \theta_m}  & d_m \\ 
0 & 1
\end{array}
\right )$ and 
$	
K=
\left ( 
\begin{array}{ll}
e^{\pi i \theta}  & d \\ 
0 & 1
\end{array}
\right )$. A straighfoward calculation shows that $w_m=(1,-(K_m-a_m)^{-1}(b_m,c_m))$ is proper vector with proper value $a_m$. Moreover, it holds 
\[
w_m \xymatrix{
\ar[r]_{m \rightarrow  \infty}&} w=(1,-K^{-1}(b,c))
\]
and $\gamma(w)=0$. That is $Ker([[\gamma]])\subset  L_0(\Gamma)\subset C(\Gamma)$.\\

The previous cases, yield  that either $Eq(\Gamma)=\emptyset$ or $LiG(\Gamma)\leq2$.
\end{proof}

\begin{lemma} \label{l:tec2}
Let $\Gamma\subset PSL(3,\C)$ be a discrete group,  such that $LiG(\P^2\setminus C(\Gamma))=2$, then:
\begin{enumerate}
\item \label{l:1tec2} There is $\tau\in PSL(3,\C)$ a projective transformation such that  each $\gamma\in \tau\Gamma\tau^{-1}$ has a lift $\tilde{\gamma}\in SL(3,\C)$ such that $\tilde{\gamma}$ is given by:
\[
\gamma=
\left (
\begin{array}{lll}
\gamma_{11} & \gamma_{12} & \gamma_{13}\\
0           & \gamma_{22} & \gamma_{23}\\
0           & \gamma_{32} & \gamma_{33}\\
\end{array}
\right );
\]
 \item \label{l:3tec2} Let $\gamma\in \tau\Gamma\tau^{-1}$ and  $\tilde{\gamma}\in SL(3,\C)$ a lift of $\gamma$, then there is a proper values $\lambda$ of $\left ( 
\begin{array}{ll}
\gamma_{22}  & \gamma_{23} \\ 
\gamma_{32} & \gamma_{33}
\end{array}
\right )$ such that $$\vert  \gamma_{11} \vert \leq \vert \lambda \vert;$$
\item \label{l:4tec2} It holds either $LiG(Eq(\Gamma))$ or $Eq(\Gamma)=\emptyset$.
\end{enumerate}
\end{lemma}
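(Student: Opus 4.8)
The plan is to dispatch the three parts in order, with essentially all of the content sitting in part (2); I also read the (garbled) conclusion of part (3) as ``$LiG(Eq(\Gamma))\le 2$ or $Eq(\Gamma)=\emptyset$'', in analogy with Lemma \ref{l:lic1}(\ref{i:2lic1}).

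For part (\ref{l:1tec2}), I would apply Lemma \ref{l:pfix} to the open $\Gamma$-invariant set $\Omega=\P^2\setminus C(\Gamma)$; this is legitimate because $C(\Gamma)$ is closed and $\Gamma$-invariant and $LiG(\Omega)=2$ by hypothesis. The lemma yields a point $p_0$ with $\Gamma p_0=p_0$, and after choosing $\tau$ with $\tau(p_0)=[e_1]$ every element of $\tau\Gamma\tau^{-1}$ fixes $[e_1]$, so its first column is a multiple of $e_1$; this is exactly the asserted block-triangular shape. It is crucial to also record the sharper output of the proof of Lemma \ref{l:pfix}: writing $\mathcal L$ for the family of all complex lines contained in $C(\Gamma)$, the hypothesis $LiG(\Omega)=2$ forces $\bigcap\mathcal L=\{p_0\}$, so \emph{every} complex line contained in $C(\Gamma)$ passes through $[e_1]$. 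This is the engine of part (\ref{l:3tec2}).

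For part (\ref{l:3tec2}), since the lift is block upper-triangular its spectrum is $\{\gamma_{11}\}$ together with the two eigenvalues $\lambda_1,\lambda_2$ of the lower block $B$, and $\det\tilde\gamma=\gamma_{11}\lambda_1\lambda_2=1$. I argue by contradiction: suppose $|\gamma_{11}|>|\lambda_1|$ and $|\gamma_{11}|>|\lambda_2|$. Then the three moduli are not all equal to $1$, so $\gamma$ has infinite order and $[e_1]$ is the strictly dominant (attracting) eigendirection. Let $W$ be the projectivization of the span of the (generalized) eigenvectors attached to $\lambda_1,\lambda_2$; it is $\gamma$-invariant, and since $e_1$ together with those eigenvectors are linearly independent, $[e_1]\notin W$. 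A case analysis on the Jordan type of $B$, mirroring the computation of $\P^2\setminus Eq(\langle[A]\rangle)$ carried out for a single loxodromic element earlier in this section, shows $W\subset\Lambda(\gamma)$ (it is the line through the middle eigenvector when the three moduli are distinct, and the dominant line of $\gamma^{-1}$ in the degenerate cases). As $\Lambda(\gamma)\subset C(\Gamma)$, the line $W$ lies in $C(\Gamma)$ but misses $[e_1]$, contradicting the emphasized fact from part (\ref{l:1tec2}). Hence $|\gamma_{11}|\le\max(|\lambda_1|,|\lambda_2|)$, the modulus-$1$ case being trivially covered.

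For part (\ref{l:4tec2}), I would run the normal-family computation of Lemma \ref{l:lic1}. By Theorem \ref{dissubeq} we have $\P^2\setminus Eq(\Gamma)=\bigcup_{g\in Lim(\Gamma)}Ker(g)$, so it suffices to classify $Ker(g)$ for $g=\lim[[\tilde\gamma_m]]$. Rescaling the block-triangular lifts by their largest entry $N_m$ and passing to a subsequence gives a nonzero limit $\gamma_\infty$ of the same block-triangular shape, with top-left entry $c$, top row remainder $u$ and lower block $B_\infty$. Inspecting the null space shows $Ker(\gamma_\infty)$ is either a point or a line, and that a \emph{line} arises only when $c=0$ (in which case $e_1$ lies in the kernel, so the line passes through $[e_1]$) or in the single configuration $B_\infty=0,\ c\neq 0$, which produces the transverse line $\{x=0\}\nopagebreak$. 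Here part (\ref{l:3tec2}) intervenes: the estimate $|\gamma_{11}^{(m)}|\le\rho(B_m)\le C\max_{i,j}|(B_m)_{ij}|$ forces $c=0$ as soon as $B_\infty=0$, so the transverse configuration never occurs. Thus every line appearing in $\bigcup_g Ker(g)$ passes through $[e_1]$; since three lines through a common point are never in general position, either these kernels already exhaust $\P^2$ (so $Eq(\Gamma)=\emptyset$) or $LiG(Eq(\Gamma))\le 2$.

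The main obstacle is part (\ref{l:3tec2}): everything rests on knowing that $\Lambda(\gamma)$ contains a complex line \emph{transverse} to the common fixed point $[e_1]$ precisely when $[e_1]$ is the strictly dominant eigendirection, and on pushing this through the degenerate Jordan types of $B$. Parts (\ref{l:1tec2}) and (\ref{l:4tec2}) are then bookkeeping around Lemma \ref{l:pfix} and the normal-family limit analysis, with the estimate of part (\ref{l:3tec2}) supplying exactly the input that eliminates the one transverse kernel line that would otherwise push $LiG(Eq(\Gamma))$ up to $3$.
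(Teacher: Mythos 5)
Your parts (i) and (ii) follow essentially the same route as the paper. Part (i) is Lemma~\ref{l:pfix} plus a conjugation sending the common fixed point to $[e_1]$, together with the key observation (which the paper also uses implicitly) that every line contained in $C(\Gamma)$ passes through $[e_1]$. Part (ii) is the paper's own contradiction argument: if $|\gamma_{11}|$ strictly dominates both eigenvalues of the lower block, a case analysis on the Jordan type produces a line inside $\Lambda(\gamma)\subset C(\Gamma)$ that misses $[e_1]$ --- the paper phrases the contradiction as ``$LiG\geq 3$'', you phrase it as a violation of the through-$[e_1]$ property; these are equivalent. Both parts are correct.

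The genuine gap is at the end of part (iii). Your classification of the kernels of limits is correct and matches the paper's Cases 1--2, including the decisive use of part (ii) to exclude the transverse configuration $B_\infty=0$, $c\neq 0$. But this only controls lines that \emph{are} kernels. Since $\P^2\setminus Eq(\Gamma)=\bigcup_{g\in Lim(\Gamma)}Ker(g)$ and this union also contains all the \emph{point}-kernels (rank-two limits), a line $\ell'$ missing $[e_1]$ could be contained in the union without equaling any single $Ker(g)$: each line-kernel through $[e_1]$ meets $\ell'$ in exactly one point, and the remainder of $\ell'$ could a priori be swept out by point-kernels, $Lim(\Gamma)$ being in general an uncountable closed set. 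So your concluding dichotomy (``either the kernels exhaust $\P^2$ or $LiG(Eq(\Gamma))\leq 2$'') does not follow from what you established. This is precisely what the last step of the paper's proof is for: when $Ker(\gamma)$ is a point $*$, the paper passes to the inverses, uses that $\gamma_m^{-1}\to *$ uniformly on compact sets of $\P^2\setminus\hat\ell$ for some line $\hat\ell$, applies this to a line $\check\ell\subset C(\Gamma)$ with $\check\ell\neq\hat\ell$, and invokes closedness and $\Gamma$-invariance of $C(\Gamma)$ to conclude $\overleftrightarrow{*,[e_1]}=\lim\gamma_m^{-1}(\check\ell)\subset C(\Gamma)$. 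This confines every point-kernel to $C(\Gamma)$, all of whose lines pass through $[e_1]$ by hypothesis, and it is this control on the point-kernels that your argument is missing; without it (or some substitute), part (iii) is not proved.
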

\begin{proof}
Let us prove (\ref{l:1tec2}). It follows easily form lemma \ref{l:pfix}.\\

Let us prove (\ref{l:3tec2}). Let $\gamma\in \tilde\Gamma$ and $\tilde\gamma\in SL(3,\C)$ be a lift of $\gamma$, then there is $H\in GL(2,\C)$, $a\in \C^*$ and $b\in M(1\times 2,\C)$ such that 
\[
\tilde\gamma=
\left (
\begin{array}{ll}
a & b \\
0 & H\\
\end{array}
\right )
\]
Let us assume that $\vert a\vert > max\{\vert \lambda\vert:\lambda \textrm{ is a proper value of } H \}$. Since every proper value of $ H$ is a proper value of $\tilde \gamma$, it follows that  $\tilde \gamma$ has at least two different proper values. Thus normal Jordan form theorem yields that    $\tilde \gamma$ is either conjugated to
\[
A=
\left (
\begin{array}{lll}
a & 0 &0 \\
0 & \lambda_{1}& 0 \\
0 &           0& \lambda_{2} \\
\end{array}
\right );\, 
B=
\left (
\begin{array}{lll}
a & 0 &0 \\
0 & \lambda_{1}& 1 \\
0 &           0& \lambda_{1} \\
\end{array}
\right )
\]
clearly, in the respective cases, $\lambda_{1},\lambda_2$ are proper values of $H$. 
A simple calculation shows:
\[
\Lambda(<[A]>)=
\left \{
\begin{array}{ll}
\overleftrightarrow{[e_1],[e_3]}\cup 
\overleftrightarrow{[e_3],[e_2]} &
\vert \lambda_1\vert <\vert \lambda_2 \vert;\\ 
\overleftrightarrow{[e_1],[e_2]}\cup 
\overleftrightarrow{[e_3],[e_2]} &
\vert \lambda_2\vert <\vert \lambda_1 \vert;\\
\{[e_1]\}\cup 
\overleftrightarrow{[e_3],[e_2]} &
\vert \lambda_1\vert =\vert \lambda_2 \vert\\
\end{array}
\right.
\]  
and
\[
\Lambda(<[B]>)=
\overleftrightarrow{[e_1],[e_2]}\cup 
\overleftrightarrow{[e_3],[e_2]}
\]
in any case, it follows that $LiG(\Gamma)\geq 3$. Which is a contradiction.\\

Let us prove (\ref{l:4tec2}). Let $(\gamma_m)\subset \Gamma$ be a sequence of distinct elements and $\gamma\in SP(3,\C)$ such that $\gamma_m \xymatrix{
\ar[r]_{m \rightarrow  \infty}&} 
\gamma$. Thus for each $m\in n\Bbb{N}$ there is $\tilde{\gamma}_m\in SL(3,\C)$, $\gamma_{11}^{(m)}\in \C^*$, $\gamma_{12}^{(m)}\in M(1\times  2,\C)$ and $H_m\in GL(2,\C) $ such that $\tilde{\gamma}_m$ is a lift of $\gamma_m$ and
\[
\tilde{\gamma}_m=
\left (
\begin{array}{ll}
\gamma_{11}^{(m)} & \gamma_{12}^{(m)} \\
0 & H_m\\
\end{array}
\right ).
\]
Set $\vert \vert H_m\vert \vert=max\{H_m(x):\vert x\vert =1\}$. Thus $(\vert \vert H_m\vert \vert^{-1}H_m)$ is sequence of unitary operators, thus we can assume that there is $H\in GL(2,\C)$ a unitary  operator such that $\vert \vert H_m\vert \vert^{-1}H_m \xymatrix{
\ar[r]_{m \rightarrow  \infty}&} 
H $, point wise. On the other hand, part  $(\ref{l:3tec2})$ of the present lemma
yields $\vert \gamma_{11}^{(m)}\vert\leq \vert \vert H_m\vert \vert$. Then we can assume that there is $\gamma_{11}\in \C$ such that $ \gamma_{11}^{(m)} \vert \vert H_m\vert \vert^{-1} \xymatrix{
\ar[r]_{m \rightarrow  \infty}&} \gamma_{11}$. Set $\gamma_{12}^{(m)}=(a_m,b_m)$ and $\vert \gamma_{12}{(m)}\vert=max\{\vert a_m\vert , \vert b_m\vert \} $. And consider the following cases:\\

Case 1.- The sequence $(\vert \vert H_m\vert \vert^{-1}\vert \gamma_{12}^{(m)}\vert) $ is unbounded. In this case, we may assume that $\vert \vert H_m\vert \vert^{-1}\vert \gamma_{12}^{(m)}\vert  \xymatrix{
\ar[r]_{m \rightarrow  \infty}&} \infty$. Thus a simple calculation shows:
\[
\vert \gamma_{12}^{(m)}\vert^{-1}\vert \vert H_m\vert \vert\tilde \gamma_{m} \xymatrix{
\ar[r]_{m \rightarrow  \infty}&} 
\left (
\begin{array}{ll}
0 & \gamma_{12}\\
0 & 0\\
\end{array}
\right ).
\]
which shows that $Ker(\gamma)$ is a complex line passing trough $[e_1]$. \\

Case 2.- The sequence $(\vert \vert H_m\vert \vert^{-1}\vert \gamma_{12}^{(m)}\vert) $ is bounded. In this case, we may assume that there is $\gamma_{12}\in M(1\times  2,\C)$ such that $\vert \vert H_m\vert \vert^{-1} \gamma_{12}^{(m)}  \xymatrix{
\ar[r]_{m \rightarrow  \infty}&} \gamma_{12}$. A simple calculation shows:
\[
\vert \vert H_m\vert \vert^{-1}\tilde \gamma_{m} \xymatrix{
\ar[r]_{m \rightarrow  \infty}&} 
\left (
\begin{array}{ll}
\gamma_{11} & \gamma_{12}\\
0 & H\\
\end{array}
\right ).
\]
Now is straighfoward to check that  $Ker(\gamma)=[e_1]$ is either $[e_1]$, a line passing trough $[e_1]$ or a point in $\P^2$. Let us assume that  $Ker(\gamma)$ is a point $*$. In thus case there is $\tilde\ell\in Gr(\P^2)$ such that $*\in \ell\subset C(\Gamma)$. 
It can be shown that, see \cite{bcn},  there is a  complex line $\hat \ell\in Gr(\P^2)$ and subsequence of $(\gamma_m)$, still denoted $(\gamma_m)$,  such that 
\begin{equation}\label{e:cg}
\gamma ^{-1}_m \xymatrix{
\ar[r]_{m \rightarrow  \infty}&}  * \textrm{ uniformly  on compact sets of }\P^2\setminus \hat\ell.
\end{equation}
Let $\check \ell\in Gr(\P^2)$ be such that $\check\ell\subset 	C(\Gamma) $ and $\check\ell\neq \hat\ell$. Thus equation \ref{e:cg}, yields $\gamma_m^{-1}(\check \ell) \xymatrix{
\ar[r]_{m \rightarrow  \infty}&} \overleftrightarrow{*,[e_1]} $. Which clearly concludes the proof.
\end{proof}
From the previous lemmas  is straighfoward to check 

\begin{corollary}
Let $\Gamma\subset PSL(3,\Bbb{C})$ be a  complex Kleinian group such that $LiG(\Omega(\Gamma))\geq 3$, then $LiG(\Bbb{P}^2_{\Bbb{C}}\setminus C(\Gamma))>3$.
\end{corollary}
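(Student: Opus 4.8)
The plan is to argue by contraposition, routing the two explicit classifications proved above through the equicontinuity set. First I would record the reduction that makes the hypothesis usable: if $LiG(\Omega(\Gamma))\ge 3$, then $\Lambda(\Gamma)=\P^2\setminus\Omega(\Gamma)$ contains three complex lines in general position, and since $\Omega(\Gamma)$ is open and $\Gamma$-invariant, part (3) of Theorem \ref{dissubeq} gives $\Omega(\Gamma)\subseteq Eq(\Gamma)$; because the reverse inclusion always holds, $\Omega(\Gamma)=Eq(\Gamma)$. In particular $Eq(\Gamma)\neq\emptyset$ and $LiG(Eq(\Gamma))=LiG(\Omega(\Gamma))\ge 3$. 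Moreover $C(\Gamma)\subseteq\P^2\setminus Eq(\Gamma)$, so $Eq(\Gamma)\subseteq\P^2\setminus C(\Gamma)$ and $\P^2\setminus C(\Gamma)$ is one of the regions covered by Theorem \ref{t:main2}; hence $LiG(\P^2\setminus C(\Gamma))\in\{1,2,3,4,\infty\}$.

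Next I would dispose of the small values by contradiction, assuming $LiG(\P^2\setminus C(\Gamma))\le 2$. If this number is $1$, then $C(\Gamma)$ carries a single line $\ell$; by the first lemma of this section the restricted action $\Gamma\vert_\ell$ has no loxodromic elements, so it is conjugate into $SO(3)$ or $EP(\C)$, and Lemma \ref{l:lic1} forces $LiG(Eq(\Gamma))\le 2$ (the option $Eq(\Gamma)=\emptyset$ being excluded by the reduction). If the number is $2$, part (\ref{l:4tec2}) of Lemma \ref{l:tec2} gives $LiG(Eq(\Gamma))\le 2$ directly, again with $Eq(\Gamma)\neq\emptyset$. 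Either alternative contradicts $LiG(Eq(\Gamma))\ge 3$, so $LiG(\P^2\setminus C(\Gamma))\ge 3$.

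The delicate point, and where I expect the genuine work, is the borderline value $LiG(\P^2\setminus C(\Gamma))=3$. Here the monotonicity $LiG(\P^2\setminus C(\Gamma))\le LiG(Eq(\Gamma))$ no longer produces a contradiction, so one cannot simply reuse Lemmas \ref{l:lic1} and \ref{l:tec2}. To handle it I would split on $Li(\P^2\setminus C(\Gamma))$. In the regime $3<Li(\P^2\setminus C(\Gamma))<\infty$ the earlier Proposition applies, since $L_0(\Gamma)\subseteq C(\Gamma)$ guarantees its hypothesis for $\Omega=\P^2\setminus C(\Gamma)$; it forces $Eq(\Gamma)=\P^2\setminus(\ell\cup\{p\})$, whence $LiG(Eq(\Gamma))=1$, an immediate contradiction that rules this regime out entirely. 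The remaining regimes are $Li=3$, i.e. a $\Gamma$-invariant triangle, and $Li=\infty$. The triangle case is the crux: passing to the finite-index subgroup fixing the three lines and their three vertices reduces the action to a diagonal one, after which one must track the kernels of all pseudo-projective limits, exactly as in the proofs of Lemmas \ref{l:lic1} and \ref{l:tec2}, to decide whether they lie in $C(\Gamma)$.

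I expect this last case to be the main obstacle, and for a transparent reason: a rank-two loxodromic diagonal group already realizes a $\Gamma$-invariant triangle with $LiG(Eq(\Gamma))=3$, so the value $3$ genuinely occurs and the method above secures the clean bound $LiG(\P^2\setminus C(\Gamma))\ge 3$. Upgrading this to a strict inequality therefore cannot be purely formal; it would require extra structural input ruling out the invariant-triangle configuration for the groups under consideration, and I would isolate precisely that point as the step needing the careful limit-kernel analysis rather than a citation of the preceding lemmas.
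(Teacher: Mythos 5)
Your first two paragraphs are precisely the argument the paper has in mind: its entire ``proof'' of this corollary is the sentence preceding it (``From the previous lemmas is straightforward to check''), and the lemmas available are exactly the ones you cite --- the reduction $\Omega(\Gamma)=Eq(\Gamma)$ via Theorem \ref{dissubeq}(iii), then the first lemma of Section \ref{s:limit} together with Lemma \ref{l:lic1} to exclude $LiG(\P^2\setminus C(\Gamma))=1$, and Lemma \ref{l:tec2}~(\ref{l:4tec2}) to exclude the value $2$. That machinery yields $LiG(\P^2\setminus C(\Gamma))\geq 3$ and nothing more; there is no lemma in the paper that touches the value $3$.

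Your final diagnosis is also correct, and it points to a defect in the statement rather than a gap in your argument: the strict inequality ``$>3$'' is false as written. The paper's own example in Section \ref{s:ex} (the group $\Gamma_a$ generated by $M_a=\mathrm{diag}(a,a,a^{-2})$ and the cyclic permutation $B$, with $\Omega(\Gamma_a)=\C^*\times\C^*$) satisfies the hypothesis, since $LiG(\Omega(\Gamma_a))=3$; but every element of $\Gamma_a$ of the form $B^kD$ with $k\not\equiv 0$ has finite order (its cube is the identity in $PSL(3,\C)$), so the infinite-order elements are diagonal and their Kulkarni limit sets lie in the union of the three coordinate lines. Hence $C(\Gamma_a)$ is exactly that union of three lines and $LiG(\P^2\setminus C(\Gamma_a))=3$, not $>3$. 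This is the invariant-triangle configuration you flag, and it genuinely occurs, so no correct argument can eliminate it. Note also that the weaker bound is all the paper ever uses: to deduce Theorem \ref{t:main3} one only needs three lines in general position inside $C(\Gamma)$ in order to apply Theorem \ref{dissubeq}(iii) to $U=\P^2\setminus C(\Gamma)$ and conclude $Eq(\Gamma)=\P^2\setminus C(\Gamma)$. So the corollary's conclusion should read $LiG(\P^2\setminus C(\Gamma))\geq 3$ (equivalently $>2$); with that correction your proof is complete, and its case analysis for the values $1$ and $2$ coincides with the paper's intended one.
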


Now Theorem \ref{t:main3} follows easily.

\section{Examples} \label{s:ex}

\subsection{A group With infinite  Lines in General Position}
Let $\Gamma\subset PU(1,2)$ be a discrete group which acts with compact quotient on $\Bbb{H}^2_{\Bbb{C}}$, see \cite{DM}, then it can be proved that, see \cite{Nav1}, $\Omega(\Gamma)=\Bbb{H}^2_{\Bbb{C}}$ and $LiG(\Omega)=\infty$.

\subsection{A group With 4 Lines in General Position}
Let $M$ be the matrix $M=\left (
\begin{array}{ll}
3 & 5\\
-5 & 8
\end{array}
\right ) $.  A straightforward calculation shows, that $M$  has eigenvalues
\[
\alpha_{\pm}=\frac{-5\pm\sqrt{21}}{2},
\] 
with respective eigenvectors:
\[
v_{+}=\left (1, \frac{-11+\sqrt{21}}{10}\right );\,
 v_{-}=\left (
\frac{-11+\sqrt{21}}{10},1\right ).
\] 
Let
$\Gamma^\ltimes_M\subset PSL(3,\Bbb{C})$ be the group 
generated by:
\[
\begin{array}{l}
\gamma_0(w,z)=(\alpha_+ w,\alpha_- z);\\
\gamma_1(w,z)=(w+1,z+\frac{-11+\sqrt{21}}{10});\\
\gamma_2(w,z)=(w+\frac{-11+\sqrt{21}}{10},z+1);\\
\gamma_3(w,z)=(z,w).
\end{array}
\]
Then it is easily seen that   $\Omega(\Gamma_A^{\ltimes})=\bigcup_{j,j=0,1}(\H^{(-1)^i}\times \Bbb{H}^{(-1)^j})$ and $LiG(\Omega(\Gamma_A^{\ltimes}))=4$. Moreover, one can check that this is "essentially" the only way to construct examples of groups $\Gamma\in PSL(3,\Bbb{C})$ with $LiG(\Omega(\Gamma))=4$, see \cite{bcn2}.

\subsection{A group With Three Lines in General Position and infinite Lines}
Let $\Gamma\subset PSL(2,C)$ be a non-elementary kleinian group and
$G\subset \mathbb{C}^*$ be  a discrete subgroup and   for each $g\in \Bbb{C}^*$ set  $i_g:SL(2, \mathbb{C})\rightarrow SL(3,  \mathbb{C})$
   given by:
\[
i(h)= \left (
\begin{array}{ll}
gh & 0\\
0 & g^{-2}
\end{array}
\right ).
\]
Then $S(\Gamma,G)$ is to be defined as:
\[
S(\Gamma,G)= \{i_g( \gamma):g\in G\, \&\,[[\gamma]]\in  \Gamma\}\,.
\]
Then, one can check $\Omega(S(\Gamma,G))=\Omega(\Gamma)\times \Bbb{C}^*$, $LiG(\Omega(S(\Gamma,G)))=3$  and 
$Li(\Omega(S(\Gamma,G)))=\infty$

\subsection{A group With Three Lines in General Position}
Let $a\in \C^*$ and $M_a,B \in SL(3,\C)$ given by:
\[
M_a =
\left (
\begin{array}{lll}
a & 0 &0\\
0 &a &0\\
0 &0 &a^{-2}\\
\end{array}
\right );\,
B =
\left (
\begin{array}{lll}
0 &0& 1\\
1 &0& 0\\
0 &1& 0\\
\end{array}
\right )
\]
 Let $\Gamma_a\subset PSL(3,\Bbb{C})$ be the group generated by $M_a$ and $B$, thus one can check, see \cite{cano}, $\Omega(\Gamma_a)=\C^*\times \C^*$ and  $LiG(\Omega(\Gamma_a))=Li(\Omega(\Gamma_a))=3$.

\subsection{A group With Two Lines in General Position And Infinite Lines}
Let $M\in SL(3,\mathbb{Z})$, also let
 $\alpha,\beta,\overline{\beta}$ with $\alpha>1$,
$\beta\neq \overline{\beta}$ be its eigenvalues. Choose a  real eigenvector
$(a_1,a_2,a_3)$ belonging to $\alpha$ and an eigenvector
$(b_1,b_2,b_3)$ belonging to $\beta$. Now let $G_M$ be the group generated by:
\[
\begin{array}{l}
\gamma_0(w,z)=(\alpha w,\beta z),\\
\gamma_i(w,z)=(w+a_i,z+b_i);\, i=1,2,3.
\end{array}
\]
Then it is easily seen that $\Omega(G_M)=\bigcup_{j=0,1}(\H^{(-1)^{j})}\times \mathbb{C})$, $Li(\Omega(G_M))=\infty$, $LiG(\Omega(G_M))=2$ and $Eq(G_M)=\emptyset$.

\subsection{A group With Two Lines in General Position}
Let $\Gamma\subset PSL(3,\Bbb{C})$ be the group induce by the matrix:
\[
\gamma =
\left (
\begin{array}{lll}
a &0& 0\\
1 &b& 0\\
0 &0& c\\
\end{array}
\right ),
\]
where $\mid a \mid<\mid b\mid<\mid c\mid$ and $abc=1$. Thus it is easily seen, see \cite{Nav2}, that $\Omega(\Gamma)=\Bbb{C}\times \Bbb{C}^*$ and in consequence $LiG(\Omega(\Gamma))=Li(\Omega(\Gamma))=2$.

\subsection{A group With One Line in General Position}
Let $v_1,v_2,v_3,v_4\in \C^{2}$ be points which are $\Bbb{R}$-linearly
independent. Let $g_i$, $i\in\{1,2,3,4\}$, be the translation induced by $v_i$, then
$\Gamma=<g_1,\ldots,g_{4}>$ is a group isomorphic to $\Bbb{Z}^{4}$ which satisfy $\Omega(\Gamma)=\C^2$ and $LiG(\Omega(\Gamma))=1$.\\

The authors are grateful to Professor Jos\'e Seade for his stimulating conversations and encouragement. Part of this research was done while the authors were visiting the
IMATE-UNAM campus Cuernavaca and the FMAT of the UADY. Also,
during this time, the second author was in a postdoctoral year at IMPA,
and they are grateful to these institutions and its people, for their support
and hospitality.

\bibliographystyle{amsplain}

\begin{thebibliography}{10}

\bibitem{bcn1}
W. Barrera, A. Cano, J. P. Navarrete, {\it The limit set of discrete subgroups of $PSL(3,\C)$}, preprint 2009, http://arxiv.org/abs/1002.0021.

\bibitem{bcn2}
W. Barrera, A. Cano, J. P. Navarrete, {\it  Two dimensional Complex Kleinian Groups With Four Complex Lines in General Position in its Limit Set}, preprint  2010, http://arxiv.org/abs/1001.5222.

\bibitem{DM}  P. Deligne, G. D. Mostow.    {\it
Commensurabilities among lattices in $PU(1,n)$}.   Ann. Math. Study
{\bf 132} (1993), Princeton Univ. Press.


\bibitem{cano} A. Cano, {\it  On Discrete Subgroups of automorphism of} $\Bbb{P}^2_{\Bbb{C}}$ , preprint 2009, http://arxiv.org/abs/0806.1336. 


\bibitem{cns} A. Cano, J.-P. Navarrete, J. Seade. {\it Complex Kleinian Groups}. Monograph to be published, http://www.matcuer.unam.mx/~jseade/Libro-Klein.pdf.

\bibitem{cs} A. Cano, J. Seade. {\em On the equicontinuity region of discrete subgroups of $PU(n,1)$}, JGA, Vol 20, No. 2, 2010.

\bibitem{kobayashi}
S.  Kobayashi, {\it Hyperbolic Manifolds and Holomorphic Mappings}, 148 pp. Marcel Dekker, 1970.


\bibitem{kul}
R. S. Kulkarni.  \textit{Groups with Domains of Discontinuity},
Math. Ann. {\bf  237},  253-272 (1978).

\bibitem{Nav1}   J.-P. Navarrete.  {\em
On the limit set of discrete subgroups of $\text{PU}(2,1)$}. Geom.
Dedicata 122, 1-13 (2006).

\bibitem{Nav2}   J.-P. Navarrete.  {\em   The Trace Function and Complex  Kleinian Groups in $\mathbb{P}^2_{\mathbb{C}}$}. Int. Jour. of
Math. Vol. 19, No. 7 (2008).


\end{thebibliography}

\end{document}